%
%
%


\documentclass{myconm-p-l}

\usepackage{hyperref}
\usepackage[utf8]{inputenc}
\usepackage{amssymb}
\usepackage{enumerate}
\usepackage[normalem]{ulem}

\newcommand{\defstyle}[1]{\textbf{#1}}
\newcommand{\myprob}[1]{\mathbb P \left[ #1 \right]}

\newcommand{\probCond}[2]{\mathbb P \left[ #1 \left| #2 \right. \right]}

\newcommand{\omid}[1]{\mathbb E \left[ #1 \right]}
\newcommand{\omidPalm}[2]{\mathbb E_{#1} \left[ #2 \right]}
\newcommand{\omidCond}[2]{\mathbb E \left[ #1 \left| #2 \right. \right]}
\newcommand{\omidCondL}[2]{\mathbb E \left[\left. #1 \right| #2 \right]}

\newcommand{\norm}[1]{\left| #1 \right|}

\newcommand{\identity}[1]{1_{#1}}
\newcommand{\bs}[1]{\boldsymbol{#1}}
\newcommand{\card}[1]{\# #1}

\newcommand{\transport}[2]{#1^{\rightarrow}#2}


%
%

\numberwithin{equation}{section}

\theoremstyle{theorem}
\newtheorem{theorem}{Theorem}
\newtheorem{lemma}{Lemma}
\newtheorem{proposition}{Proposition}
\newtheorem{corollary}{Corollary}

\newtheorem{question}{Question}
\theoremstyle{definition}
\newtheorem{definition}{Definition}

\newtheorem{algorithm}{Algorithm}
\newtheorem{example}{Example}
\theoremstyle{remark}
\newtheorem{remark}{Remark}


\theoremstyle{theorem}

\newenvironment{customthm}[1]
{\innercustomthm}
{\endinnercustomthm}

\numberwithin{equation}{section}

\makeatletter
\let\orgdescriptionlabel\descriptionlabel
\renewcommand*{\descriptionlabel}[1]{%
	\let\orglabel\label
	\let\label\@gobble
	\phantomsection
	\edef\@currentlabel{#1}%
	\let\label\orglabel
	\orgdescriptionlabel{#1}%
}
\makeatother

\begin{document}

\title{SHIFT-COUPLING OF RANDOM ROOTED GRAPHS AND NETWORKS}


\author{Ali Khezeli}
\address{School of Mathematics\\ Institute for Research in Fundamental Sciences (IPM)\\ P.O. Box 19395-5746, Tehran, Iran}
\email{alikhezeli@ipm.ir}
\thanks{}


\subjclass[2010]{Primary 60C05; Secondary: 60K99; 05C80}

\keywords{Shift-coupling, random graphs, random networks, unimodularity, invariant transports, stable transports, network extension, unimodularization.}
\date{\today}

\begin{abstract}
	In this paper, we present a result similar to the shift-coupling result of Thorisson (1996) in the context of random graphs and networks. The result is that a given random rooted network can be obtained by {changing the root of} another given one if and only if the distributions of the two agree on the invariant sigma-field. Several applications of the result are presented for the case of unimodular networks. In particular, it is shown that the distribution of a unimodular network is uniquely determined by its restriction to the invariant sigma-filed. Also, the theorem is applied to the existence of an invariant transport kernel that balances between two given (discrete) measures on the vertices. An application is the existence of a so called \textit{extra head scheme} for the Bernoulli process on an infinite unimodular graph. Moreover, a construction is presented for balancing transport kernels that is a generalization of the Gale-Shapley stable matching algorithm in bipartite graphs. Another application is on a general method that covers the situations where some vertices and edges are added to a unimodular network and then, to make it unimodular, the probability measure is biased and then a new root is selected. It is proved that this method provides all possible \textit{unimodularizations} in these situations. Finally, analogous existing results for stationary point processes and unimodular networks are discussed in detail.
\end{abstract}

\maketitle


\section{Introduction}
\label{sec:intro}

This paper deals with random rooted graphs, which are possibly infinite, but finite-degree connected graphs with a distinguished vertex called the \textit{root}. Roughly speaking, each vertex and edge of a graph can be equipped with marks to form a network. 
Unimodular random rooted networks have been of great interest in the last two decades. 
They 
satisfy a formulation of the heuristic property that all vertices are \textit{equally likely} to be the root, although there may be infinitely many vertices. The formulation, called \textit{the mass transport principle}, will be recalled in Section~\ref{sec:basic}.
This concept is introduced in~\cite{BLPS} and developed further in~\cite{processes} to generalize some properties of Cayley graphs, which are highly homogeneous, to more general classes of graphs and random graphs. It also arises in the study of limits of sequences of finite graphs, which is the novel work of~\cite{objective}, and also in stationary point processes. Many concepts and results in stationary point processes have analogues in the context of unimodular networks. This analogy will be addressed many times in this paper.


To introduce the idea of this work, we get help from the following general construction method.
Let $[\bs G, \bs o]$ be a given (non-unimodular) random rooted network, where $\bs o$ stands for the root. In some examples in the literature, a unimodular network is constructed from $[\bs G, \bs o]$ by the following two steps: \textit{Bias} the probability measure by an appropriate function and then choose a new random root with an appropriate distribution on the vertices of $\bs G$. Denote the resulting random rooted network by $[\bs G', \bs o']$. Explicit examples of such constructions in the literature will be recalled in Section~\ref{sec:extAndTrans}. 
%
%
One may intuitively accept that $[\bs G', \bs o']$ is \textit{equivalent} to $[\bs G, \bs o]$ if we disregard the root (\cite{processes}), or that $[\bs G', \bs o']$ and $[\bs G, \bs o]$ \textit{have the same non-rooted networks}. However, to state this in a mathematically precise way, one should answer the following question.
\begin{question}
	When do two given (not necessarily unimodular) random rooted networks have the same non-rooted networks?
\end{question}

Note that the question is not limited to the setting of the above example. In general, no special relation is assumed between the two random rooted networks  and they might be given only by two probability distributions.
The answer to this question is not straightforward {since the space of non-rooted networks is non-standard}.
Several definitions {of \textit{unrooted-equivalence}} are provided in Section~\ref{sec:shiftCoupling} as answers to this question, where some of the definitions are shown to be equivalent. It will be shown that in the above example, $[\bs G', \bs o']$ and $[\bs G, \bs o]$ are \textit{weakly} unrooted-equivalent, 
to be defined later. 
The \textit{strong} sense in our definition is that (the distribution of) the second one can be obtained from the first by a \textit{root-change} (note that there is a biasing in the above definition of $[\bs G', \bs o']$ before changing the root). Another definition is that the two random rooted networks agree on the \textit{invariant sigma-field}; 
i.e. any event that does not depend on the root occurs with equal probabilities.
Some other definitions will also be given (Definition~\ref{def:same}). 

The main theorem (Theorem~\ref{thm:shiftCoupling}) in this work is that the last two definitions mentioned above are equivalent; namely, if two random rooted networks agree on the invariant sigma-field, then they can be obtained from each other by a root-change. This theorem, in its spirit, is similar to a {well known} result by Thorisson~\cite{Th96} that studies \textit{shift-coupling} of random elements in a space equipped with a group action. 

In Section~\ref{sec:unimodularCase}, we discuss applications of the main theorem in the unimodular case.
Theorem~\ref{thm:unimodularUnique} says that the distribution of a unimodular network is uniquely determined by the distribution of its non-rooted network, or equivalently, by its restriction to the invariant sigma-field. Theorem~\ref{thm:balancing} deals with \textit{invariant balancing transport kernels}, which are transport kernels that transport a given measure to another given one.
In the context of stationary random measures and point processes, this concept has been studied by many authors recently. In this context, {under suitable assumptions,} the existence of a (random) balancing transport kernel that is invariant under translations is implied by the result of~\cite{Th96} (proved in the general case in~\cite{ThLa09}). 
{Based on this abstract result}, several constructions have been provided, starting from~\cite{Li} and~\cite{HoPe06}, where the latter provides a transport kernel balancing between (a multiple of) the Lebesgue measure and the counting measure of the Poisson point process. 
Here, in the context of unimodular networks, we consider two discrete measures on the vertices of the random network. In Theorem~\ref{thm:balancing}, it will be proved that, roughly speaking, a balancing transport kernel between them exists if and only if the measures have equal \textit{sample intensities}; i.e. have the same expectation conditioned on the invariant sigma-field. A construction of such transport kernels is discussed in Section~\ref{sec:construction} (Theorem~\ref{thm:stable}) based on the construction of stable transports in~\cite{stable}, which is by itself based on~\cite{HoPe06}. It is a generalization of the Gale-Shapley stable matching algorithm in bipartite graphs~\cite{GaSh}.

In Section~\ref{sec:extAndTrans}, we describe a general method for constructing unimodular networks. 
In some of the examples in the literature, such a network 
is constructed by the following steps: Adding some vertices and edges to another unimodular network (called a \textit{network extension} here), then biasing the probability measure and finally applying a root-change. 
These examples are unified in the method presented in Theorem~\ref{thm:extensionT}. It is also proved in Theorem~\ref{thm:uniExtGen} that this method gives all possible ways to \textit{unimodularize} the extension (to be defined more precisely later). 

Many of the definitions and results in this paper have analogues in the context of point processes and random measures, which are discussed in Section~\ref{sec:analogy}.


This paper is structured as follows. The definition and basic properties of unimodular random networks are given in Section~\ref{sec:basic}. The definition of \textit{unrooted-equivalence} 
and the main shift-coupling theorem are presented in Section~\ref{sec:shiftCoupling}. The applications of the theorem to the unimodular case are studied in Section~\ref{sec:unimodularCase}. Section~\ref{sec:extAndTrans} deals with extensions of unimodular networks. Section~\ref{sec:construction} presents a construction of balancing transport kernels using stable transports. The proofs of some results are moved to Section~\ref{sec:proofs} to help to focus on the main thread of the paper.
Finally, Section~\ref{sec:analogy} reviews the analogous results in the context of point processes. 

%
%

\section{Random Rooted Graphs and Networks}
\label{sec:basic}
%
%
%
%
%
%
%
%

In this section, we recall the concepts of random networks and unimodularity mainly from~\cite{processes}.
A \defstyle{network} is a (multi-) graph $G=(V,E)$ equipped with a complete
separable metric space $\Xi$, called the \defstyle{mark space} and with
two maps from $V$ and $\{(v,e):v\in V, e\in E, v\sim e\}$ to $\Xi$, where the symbol $\sim$ is used for adjacency of vertices or edges.
The image of $v$ (resp. $(v,e)$) in $\Xi$ is called its \defstyle{mark}.
The degree of a vertex $v$ is denoted by $d(v)$ and the graph-distance of vertices $v$ and $w$ is denoted by $d(u,v)$. The symbol $N_r(v)$ is used for the closed ball centered at $v$ with radius $r$; i.e. the set of vertices with distance at most $r$ to $v$.

In this paper, all networks are assumed to be locally finite;
that is, the degrees of every vertex is assumed to be finite. 
Moreover, a network is assumed to be connected except
when explicitly mentioned. 
An \defstyle{isomorphism}
between two networks is a graph isomorphism that also preserves the marks. 
A \defstyle{rooted network} is a pair $(G,o)$ in which $G$ is a network
and $o$ is a distinguished vertex of $G$ called the \defstyle{root}.
An \defstyle{isomorphism} of rooted networks is a network
isomorphism that takes the root of one to that of the other.
Let $\mathcal G$ denote the set of isomorphism classes of connected 
networks and $\mathcal G_*$ the set of 
isomorphism classes of connected 
rooted networks. 
The set $\mathcal G_{**}$ is defined similarly for doubly-rooted networks; i.e. those with a pair
of distinguished vertices. The isomorphism class of a network $G$
(resp. $(G,o)$ or $(G,o,v)$) is denoted by $[G]$
(resp. $[G,o]$ or $[G,o,v]$). 

The sets $\mathcal G_*$ and $\mathcal G_{**}$ can be equipped
with natural metrics that make them a complete separable
metric space and equip them with the corresponding Borel sigma-fields. The distance of two rooted networks is defined based on the similarity of finite neighborhoods of their roots. See~\cite{processes} for the precise definition. 
There are two natural projections $\pi_1,\pi_2: \mathcal G_{**}\rightarrow \mathcal G_*$ obtained by {forgetting the second and the first root respectively.}
These projections are continuous and measurable. In contrast, there is no useful metric on $\mathcal G$. However, as will be defined in Definition~\ref{def:J}, the natural projection $\pi:\mathcal G_*\rightarrow\mathcal G$ of forgetting the root induces a sigma-field on $\mathcal G$. This sigma-field is extensively used in this paper although it does not make $\mathcal G$ a standard space.

\begin{definition}
	A \defstyle{random rooted network} is a random element in $\mathcal G_*$ and is represented in either of the following ways.
	\begin{itemize}
		\item A probability measure $\mu$ on $\mathcal G_*$.
		\item A measurable function from some probability space
		to $\mathcal G_*$ that is denoted by bold symbols $[\bs G,\bs o]$. Here, $\bs G$ and $\bs o$ represent the network and the root respectively.
	\end{itemize}
\end{definition}

Note that the whole symbol $[\bs G, \bs o]$ represents one random object, which is a random equivalence class of rooted networks. Therefore, any formula using $\bs G$ and $\bs o$ should be well defined for equivalence classes of rooted networks; i.e. should be invariant under rooted isomorphisms. Moreover, bold symbols are used only in the random case.

The relation between the two representations is expressed by the equation $\mu(A)=\myprob{[\bs G, \bs o]\in A}$ for events $A\subseteq\mathcal G_*$; i.e. $\mu$ is the distribution of the random object.
These representations are mostly treated equally in this paper. Therefore, all definitions and results expressed for random rooted networks $[\bs G, \bs o]$ also make sense for probability measures on $\mathcal G_*$.



For a measurable function
$g:\mathcal G_{**}\rightarrow \mathbb R^{\geq 0}$, a network $G$ and $u,v\in V(G)$, let
\[
g_G(u,v):= g[G,u,v],
\]
where brackets $[\cdot]$ are used as a short form of $([\cdot])$. Also, for $o\in V(G)$, let
\begin{eqnarray*}
g^+_G(o)&:=&\sum_{v\in V(G)} g[G,o,v],\\
g^-_G(o)&:=&\sum_{v\in V(G)} g[G,v,o].
\end{eqnarray*}

\begin{definition} 
	\label{def:unimodular}
	A random rooted network $[\bs{G}, \bs o]$ is \defstyle{unimodular} if for
	all measurable functions $g:\mathcal G_{**}\rightarrow \mathbb R^{\geq 0}$,
	\begin{equation} 
	\label{eq:unimodular}
	\omid{ g_{\bs G}^+(\bs o)} = \omid{ g_{\bs G}^-(\bs o)},
	\end{equation}
	where the expectations may be finite or infinite. The term \defstyle{unimodular network} is used as an abbreviation for {unimodular random rooted network}.
	A probability measure on $\mathcal G_*$ is called \defstyle{unimodular}
	when, by considering it as a random rooted network, one gets a unimodular network. 
\end{definition}

\begin{remark}
	\label{rem:transport}
	For a function $g$ as above, $g_G(\cdot,\cdot)$ can be regarded as a function on $V(G)\times V(G)$ {(or a transport kernel on $V(G)$)} defined for all networks $G$.
	One can interpret $g_G(u,v)$ as the amount of mass that is transported from $u$ to $v$. 
	Using this intuition, $g^+_G(o)$ (resp. $g^-_G(o)$) can be seen as the amount of mass that goes out of (resp. comes into) $o$ and \eqref{eq:unimodular} expresses 
	some conservation of mass in expectation. It is referred to as the \defstyle{mass transport principle} in the literature. With this analogy, a measurable function $g:\mathcal G_{**}\rightarrow\mathbb R^{\geq 0}$ is also called an \defstyle{invariant transport kernel} in this paper. 
\end{remark}

The \defstyle{invariant sigma-field $I$} on $\mathcal G_*$ is the family of events in $\mathcal G_*$ that are invariant under changing the root; i.e., events $A\subseteq\mathcal G_*$ such that for every rooted network $(G,o)$ and every $v\in V(G)$, if $[G,o]\in A$, then $[G,v]\in A$. Events in $I$ are also called \defstyle{invariant events} here. A unimodular network $[\bs G, \bs o]$ is called \defstyle{extremal} 
if any invariant event has probability 0 or 1.

A measurable function on $w:\mathcal G_*\rightarrow\mathbb R$ is $I$-measurable if and only if it \defstyle{doesn't depend on the root}; i.e. for every rooted network $(G,o)$ and every $v\in V(G)$, one has $w[G,o]=w[G,v]$. Also, if $[\bs G, \bs o]$ is a random rooted network, we say $w[\bs G, \bs o]$ \defstyle{doesn't depend on the root almost surely} if almost surely, for all $v\in V(\bs G)$, one has $w[\bs G,\bs o]=w[\bs G,v]$.

The following definition is borrowed from~\cite{vertexShift}.
\begin{definition}
	\label{def:covariantSubset}
	A \defstyle{covariant subset}  (of the vertices) is a function
	$S$ which associates to each network $G$ a set $S_G\subseteq V(G)$
	such that $[G,o]\mapsto \identity{\{o\in S_G\}}$ is a well-defined and measurable function on $\mathcal G_*$.
	By an abuse of notation, we use the same symbols for the subnetwork induced by $S_{G}$ (i.e. the restriction of $G$ to $S_{G}$) for all networks $G$. This is called a \defstyle{covariant subnetwork}.
\end{definition}

Note that in this definition, $S$ should be covariant under network isomorphisms, that is, for all isomorphisms $\rho:G\rightarrow G'$, one should have $\rho(S_G) = S_{G'}$. Moreover,
For any event $A\subseteq\mathcal G_*$, $S_{G}:=\{v\in V(G): [G,v]\in A\}$ is a covariant subset. This easily implies that covariant subsets are in one-to-one correspondence with measurable subsets of $\mathcal G_*$.

The following lemma is straightforward and its proof is skipped. See~\cite{vertexShift} or~\cite{processes}.
\begin{lemma} 
	\label{lemma:happensAtRoot}
	Let $[\bs G,\bs o]$ be a unimodular network and $S$ be a covariant
	subset of the vertices. Then $\mathbb P[S_{\mathbf G}\neq \emptyset]>0$
	if and only if $\mathbb P[\bs o \in S_{\mathbf G}]>0$. Equivalently, $S_{\bs G}=V(\bs G)$ a.s. if and only if $\bs o\in S_{\bs G}$ a.s.
\end{lemma}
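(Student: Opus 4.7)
The easy direction is immediate: if $\bs o\in S_{\bs G}$ with positive probability then $S_{\bs G}\neq\emptyset$ with at least the same probability. So the content of the lemma is the reverse implication, which I would establish by contrapositive: assuming $\bs o\notin S_{\bs G}$ almost surely, show that $S_{\bs G}=\emptyset$ almost surely. The natural tool is the mass transport principle applied to an invariant kernel that ``reports'' the existence of a nonempty $S_G$ from outside by sending mass into $S_G$.

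Concretely, I would define the transport kernel $g:\mathcal G_{**}\to\mathbb R^{\geq 0}$ by setting, for a network $G$ and vertices $u,v$,
\[
g[G,u,v]=\frac{\indic{u\notin S_G,\ S_G\neq\emptyset,\ v\in S_G,\ d(u,v)=d(u,S_G)}}{\card{\{w\in S_G:d(u,w)=d(u,S_G)\}}},
\]
that is, every vertex outside $S_G$ distributes a unit of mass equally among its nearest neighbours in $S_G$ (when $S_G$ is nonempty). Local finiteness ensures the denominator is finite, and measurability of $g$ on $\mathcal G_{**}$ follows from the fact that $S$ is covariant (hence the indicator $\indic{o\in S_G}$ is a measurable function of $[G,o]\in\mathcal G_*$) together with standard measurability of graph-distance. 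By construction,
\[
g^+_G(o)=\indic{o\notin S_G,\ S_G\neq\emptyset},\qquad g^-_G(o)\ \text{is supported on}\ \{o\in S_G\}.
\]

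Under the hypothesis $\bs o\notin S_{\bs G}$ a.s., the second identity gives $g^-_{\bs G}(\bs o)=0$ a.s., so $\omid{g^-_{\bs G}(\bs o)}=0$. Applying Definition~\ref{def:unimodular} to this $g$ yields $\omid{g^+_{\bs G}(\bs o)}=0$, which by the first identity forces $\mathbb P[\bs o\notin S_{\bs G},\ S_{\bs G}\neq\emptyset]=0$. Combined with the assumption, this gives $\mathbb P[S_{\bs G}\neq\emptyset]=0$, as required. The second equivalence (``$S_{\bs G}=V(\bs G)$ a.s.\ iff $\bs o\in S_{\bs G}$ a.s.'') then follows by applying the first equivalence to the complementary covariant subset $S'_G:=V(G)\setminus S_G$, which is clearly covariant and measurable.

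The only real subtlety is bookkeeping for the kernel $g$: one must verify that the tie-breaking by equal redistribution keeps $g$ well defined and measurable on $\mathcal G_{**}$, and that the denominator is finite (here local finiteness of $G$ is essential, since $S_G$ meets each sphere around $u$ in only finitely many vertices). Beyond this, the argument is a direct one-line application of the mass transport principle to a carefully chosen kernel, and I do not anticipate any deeper obstacle.
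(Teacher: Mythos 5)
Your proof is correct, and the mass transport principle is indeed the right tool here (the paper cites this lemma to \cite{vertexShift} and \cite{processes} without giving a proof; it is sometimes stated there in the form ``everything shows at the root''). However, your kernel is more elaborate than it needs to be. The nearest-neighbour normalization is unnecessary: the bare kernel
\[
g[G,u,v]:=\indic{v\in S_G}
\]
already works. With this choice, $g^+_G(o)=\card{S_G}$ and $g^-_G(o)=\card{V(G)}\cdot\indic{o\in S_G}$ (both possibly infinite, which is allowed by Definition~\ref{def:unimodular}). Under the hypothesis $\bs o\notin S_{\bs G}$ a.s., $\omid{g^-_{\bs G}(\bs o)}=0$, hence by~\eqref{eq:unimodular} also $\omid{g^+_{\bs G}(\bs o)}=\omid{\card{S_{\bs G}}}=0$, giving $S_{\bs G}=\emptyset$ a.s.\ directly, with no measurability worries about tie-breaking, distances, or local finiteness. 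Your extra structure does buy one thing: the Markovian property $g^+_G(\cdot)\le 1$, which is needed in some other applications in the paper (for instance when the kernel is meant to define a root-change), but here it is pure overhead. Your reduction of the second equivalence to the first by passing to the complementary covariant subset is clean and exactly right.
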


%
%
%

\section{Shift-Coupling of Random Rooted Networks}
\label{sec:shiftCoupling}
In this section, different formulations of \textit{unrooted-equivalence} 
are defined 
and the main theorem of this paper is presented, which studies the implications between these formulations. The proofs of most of the results are moved to Section~\ref{sec:proofs} to help to focus on the main thread. The reader can either see the proofs first or proceed to the next results with no problem.

\subsection{Definitions}
The following definitions are needed for stating the main definition (Definition~\ref{def:same}).

\begin{definition}
	\label{def:J}
	The projection $\pi:\mathcal G_*\rightarrow\mathcal G$ defined by $\pi[G,o]:=[G]$ induces a sigma-field, namely $J$, on $\mathcal G$ as follows.
	\[
	J:=\{B\subseteq\mathcal G: \pi^{-1}(B)\text{ is measurable}\}.
	\]
	A \defstyle{random non-rooted network} is a random element in, or a probability measure on $(\mathcal G,J)$, although it does not form a standard probability space (Proposition~\ref{prop:nonstandard} below). 
	If $[\bs G, \bs o]$ is a random rooted network with distribution $\mu$, the symbol $[\bs G]$ is used for its corresponding random non-rooted network whose distribution is $\pi_*\mu$. It can also be seen as a natural coupling of $\mu$ and $\pi_*\mu$.
	
	%
\end{definition}

Non-standardness of $(\mathcal G,J)$ is stated in the following proposition. 
It is essentially an easy result in theory of \textit{smooth Borel equivalence relations}. See the notes in Subsection~\ref{subsec:shiftCoupling:notes}. 


\begin{proposition}
	\label{prop:nonstandard}
	The measurable space $(\mathcal G, J)$ is not a standard Borel space. More precisely, there is no metric on $\mathcal G$ that makes it a Polish space whose Borel sigma-field is $J$.
\end{proposition}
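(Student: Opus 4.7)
The strategy I would follow is a contradiction-by-transversal argument from descriptive set theory. Suppose $(\mathcal{G}, J)$ were a standard Borel space. Since each fiber $\pi^{-1}([G]) = \{[G,v] : v \in V(G)\}$ is at most countable, $\pi : \mathcal{G}_* \to \mathcal{G}$ is then a Borel surjection between standard Borel spaces with countable vertical sections, and the Lusin--Novikov uniformization theorem yields a Borel section $s : \mathcal{G} \to \mathcal{G}_*$. By the Lusin--Souslin theorem the image $B := s(\mathcal{G})$ is a Borel subset of $\mathcal{G}_*$ meeting every root-change equivalence class in exactly one point. My task then reduces to showing that no such Borel transversal can exist, which I would do by pulling $B$ back through an explicit continuous embedding of the Bernoulli shift into $\mathcal{G}_*$.

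For the embedding, set $\Omega := \{0,1\}^{\mathbb{Z}}$ with unit shift $T$. To each $\omega \in \Omega$ I associate the line-graph network $G(\omega)$ on vertex set $\mathbb{Z}$ carrying vertex mark $\omega_n$ at $n$ and, crucially, \emph{asymmetric} edge-endpoint marks on each edge $\{n,n+1\}$ (say mark R at the endpoint $n$ and mark L at the endpoint $n+1$) in order to kill the flip automorphism $x \mapsto -x$. The resulting map $\phi : \omega \mapsto [G(\omega), 0]$ is a continuous injection $\Omega \to \mathcal{G}_*$, and a direct inspection of automorphisms shows that $[G(\omega)] = [G(\omega')]$ in $\mathcal{G}$ if and only if $\omega' = T^k \omega$ for some $k \in \mathbb{Z}$. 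In particular $\pi^{-1}([G(\omega)]) = \{\phi(T^k \omega) : k \in \mathbb{Z}\}$.

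The preimage $A := \phi^{-1}(B) \subseteq \Omega$ is therefore a Borel set meeting every $T$-orbit in exactly one point. Endow $\Omega$ with the Bernoulli$(1/2)$ product measure $\mu$, which is $T$-invariant. The iterates $\{T^k A\}_{k \in \mathbb{Z}}$ are pairwise disjoint by the transversal property, and each has measure $\mu(A)$ by $T$-invariance, so countable additivity forces $\mu(A) = 0$. But $\bigcup_k T^k A$ meets every $T$-orbit and thus has $\mu$-measure equal to that of the aperiodic sequences, which is $1$, a contradiction.

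The delicate step I expect is not the ergodic-theoretic punchline, which is classical, but rather the descriptive-set-theoretic setup: verifying that under the standardness hypothesis the projection $\pi$ is genuinely Borel (not merely $J$-measurable) so that Lusin--Novikov applies, and checking that $\phi$ is Borel (here guaranteed by continuity, since a basic neighborhood in $\mathcal{G}_*$ prescribes a finite neighborhood of the root and thus pulls back to a cylinder in $\Omega$). A more abstract alternative would be to invoke directly the theorem that a countable Borel equivalence relation on a standard space is smooth if and only if it admits a Borel transversal, then witness non-smoothness via the same Bernoulli embedding.
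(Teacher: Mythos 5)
Your proof is correct, but it takes a genuinely different route from the paper's. The paper's proof is measure-theoretic and sits entirely inside the framework of unimodular networks: it takes an \emph{extremal} unimodular probability measure $\mu$ on $\mathcal G_*$ (for instance, an infinite unimodular graph with i.i.d.\ Bernoulli vertex marks, as in Lemma~\ref{lemma:ergodic}), notes that $\nu := \pi_*\mu$ is $\{0,1\}$-valued on $J$ by extremality, and then uses the standard-Borel fact that a $\{0,1\}$-valued Borel probability measure must be a Dirac mass. This would force $\mu$ to live on a single countable $\pi$-fiber $\{[G,v]:v\in V(G)\}$, which fails for the chosen $\mu$. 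Your proof instead is purely descriptive-set-theoretic and needs none of the unimodularity machinery: you use the standardness hypothesis to upgrade $\pi$ to a genuine Borel surjection with countable fibers, invoke Lusin--Novikov and Lusin--Souslin to manufacture a Borel transversal $B$ for the root-change relation, embed the two-sided Bernoulli shift into $\mathcal G_*$ via marked bi-infinite paths (with asymmetric edge-endpoint marks to kill the reflection), pull $B$ back to a Borel transversal $A$ for the shift orbit relation, and run the classical Vitali-type argument against the Bernoulli$(1/2)$ measure. The two approaches illuminate different facets of the same phenomenon: the paper's argument is shorter and shows the incompatibility of standardness with the \emph{existence of non-atomic ergodic invariant measures}, while yours shows the incompatibility with the \emph{existence of Borel transversals} and is self-contained given classical facts, requiring neither the mass-transport principle nor the material on extremality. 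A minor point: the $T^kA$ in your last paragraph are only pairwise disjoint off the (measure-zero) set of periodic sequences, so the additivity contradiction should technically be phrased modulo null sets; this does not affect the conclusion. Also, your flagged ``delicate step'' about $\pi$ being Borel rather than merely $J$-measurable is in fact immediate: once $(\mathcal G,J)$ is assumed standard with Borel sigma-field $J$, $J$-measurable and Borel coincide by definition.
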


Due to non-standardness, several classical tools of probability theory may fail for random non-rooted networks; e.g. conditional expectation. However, it poses no problem for the arguments in this paper; e.g. equality of distributions, pull-back and push-forward of distributions, etc.

Note also that the map $\pi^{-1}$ corresponds $J$ bijectively to the invariant sigma-field $I$ on $\mathcal G_*$. Therefore, probability measures on $(\mathcal G,J)$ are in one-to-one correspondence with probability measures on $(\mathcal G_*,I)$.

\begin{definition}
	\label{def:bias}
	Let $\mu$ be a probability measure on $\mathcal G_*$
	and $w:\mathcal G_*\rightarrow\mathbb R^{\geq 0}$ be a measurable function. Assume $0<\int_{\mathcal G_*}w d\mu<\infty$. By \defstyle{biasing $\mu$ by $w$} we mean the following measure on $\mathcal G_*$. 
	\[
	A\mapsto\frac 1{\int_{\mathcal G_*} wd\mu} \int_{\mathcal G_*} w\identity{A} d\mu.
	\]
	The choice of the denominator ensures that the result is a probability measure. It is the unique probability measure on $\mathcal G_*$ whose Radon-Nikodym derivative w.r.t. $\mu$ is proportional to $w$.
	Biasing a probability measure on $\mathcal G$ is defined similarly.
\end{definition}

It can be seen that biasing $\mu$ by $w$ is equal to $\mu$ if and only if $w$ is essentially constant (w.r.t. $\mu$); i.e. for some constant $c$ one has $w=c$, $\mu$-a.s.
Note that $\int_{\mathcal G_*}wd\mu$ is not assumed to be equal to one. 
As an example, for an event $B\subseteq \mathcal G_*$, conditioning $\mu$ on $B$ is just biasing $\mu$ by the indicator function $1_B$. 

\begin{lemma}
	\label{lemma:biasG}
	By biasing the distribution of a random rooted network $[\bs G, \bs o]$ by a function $w$, the distribution of $[\bs G]$ becomes biased by $\omidCond{w[\bs G, \bs o]}{I}$, where the latter, which is $I$-measurable, is considered as a function 
	of $[\bs G]$ 
	with a slight abuse of notation (see Section~\ref{sec:basic}).
\end{lemma}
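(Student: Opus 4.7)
The plan is to verify the claim directly from the definition of biasing and the defining property of conditional expectation, exploiting the bijective correspondence between $J$-measurable sets on $\mathcal G$ and $I$-measurable sets on $\mathcal G_*$ provided by $\pi^{-1}$.

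Let $\mu$ be the distribution of $[\bs G, \bs o]$, set $Z := \int_{\mathcal G_*} w\, d\mu \in (0,\infty)$, and let $\mu'$ denote $\mu$ biased by $w$, so that $\mu'(A) = Z^{-1}\int_A w\, d\mu$ for every measurable $A\subseteq\mathcal G_*$. Let $\tilde w$ be the $J$-measurable function on $\mathcal G$ corresponding to the $I$-measurable random variable $\omidCond{w[\bs G, \bs o]}{I}$ under the bijection between $J$ and $I$ induced by $\pi^{-1}$. The goal is to show that $\pi_*\mu'$ is $\pi_*\mu$ biased by $\tilde w$.

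For any $B\in J$, the set $\pi^{-1}(B)$ lies in $I$, so $1_{\pi^{-1}(B)}$ is $I$-measurable. The defining property of conditional expectation then gives
\[
\int_{\mathcal G_*} w\cdot 1_{\pi^{-1}(B)}\, d\mu \;=\; \int_{\mathcal G_*} \omidCond{w[\bs G,\bs o]}{I}\cdot 1_{\pi^{-1}(B)}\, d\mu \;=\; \int_B \tilde w\, d(\pi_*\mu),
\]
where the last equality uses that the integrand on the left factors through $\pi$. Consequently,
\[
\pi_*\mu'(B) \;=\; \mu'(\pi^{-1}(B)) \;=\; Z^{-1}\int_B \tilde w\, d(\pi_*\mu).
\]
Taking $B=\mathcal G$ in the same calculation (equivalently, using $\omid{\omidCond{w[\bs G,\bs o]}{I}} = \omid{w[\bs G,\bs o]}$) yields $Z = \int_{\mathcal G}\tilde w\, d(\pi_*\mu)$, so the prefactor $Z^{-1}$ is exactly the normalization required by Definition~\ref{def:bias}. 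Hence $\pi_*\mu'$ is $\pi_*\mu$ biased by $\tilde w$, as desired.

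The only point that deserves care, and which I anticipate being the main obstacle to a clean write-up (rather than a substantive obstruction), is the identification between $I$-measurable functions on $\mathcal G_*$ and $J$-measurable functions on $\mathcal G$. Although $(\mathcal G, J)$ is non-standard by Proposition~\ref{prop:nonstandard}, the map $B \mapsto \pi^{-1}(B)$ is a bijection of sigma-fields, so every nonnegative $I$-measurable function on $\mathcal G_*$ descends uniquely to a nonnegative $J$-measurable function on $\mathcal G$, and integrals against $\mu$ and $\pi_*\mu$ match via the pushforward formula. This is precisely the \emph{slight abuse of notation} the statement warns about, and everything else in the proof is the usual Radon--Nikodym/conditional-expectation manipulation that would work in any standard setting.
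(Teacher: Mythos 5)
Your proof is correct and follows essentially the same route as the paper's: both apply the defining property of conditional expectation to indicators of sets in $I = \pi^{-1}(J)$, then push forward by $\pi$ to read the result as a biasing of the law of $[\bs G]$. The only cosmetic difference is that you phrase the argument in terms of the measures $\mu, \mu', \pi_*\mu$ while the paper writes it with expectations of random variables, and you spell out the identification $B\mapsto\pi^{-1}(B)$ a bit more explicitly, but the computation is identical.
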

This lemma is straightforward and we skip its proof.

\begin{definition}
	\label{def:rootChange}
	Let $[\bs G, \bs o]$ be a (not necessarily unimodular) random rooted network and $T:\mathcal G_{**}\rightarrow\mathbb R^{\geq 0}$ be a measurable function. Assume 
	$T_{\bs G}^+(\bs o)=1$ a.s.
	Conditioned on $[\bs G, \bs o]$, choose a new root in $V(\bs G)$ with distribution $T_{\bs G}(\bs o, \cdot)$; i.e. consider the following probability measure on $\mathcal G_*$.
	%
	\begin{equation}
	\label{eq:rootChange}
	A\mapsto \omid{\sum_{v\in V(\bs G)}T_{\bs G}(\bs o, v) \identity{A}[\bs G, v]}.
	\end{equation}
	%
	Any random rooted network with this distribution is called the \defstyle{root-change of $[\bs G, \bs o]$ by kernel $T$}. 
\end{definition}


\begin{lemma}
	\label{lemma:rootChange}
	If $[\bs G',\bs o']$ is a root-change of $[\bs G, \bs o]$, then $[\bs G, \bs o]$ is also a root-change of $[\bs G',\bs o']$.
\end{lemma}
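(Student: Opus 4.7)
The plan is to construct $T'$ by disintegrating the natural joint law of the old and new roots on doubly-rooted networks. Let $\mu$ and $\mu'$ denote the distributions of $[\bs G, \bs o]$ and $[\bs G', \bs o']$. First I would introduce a probability measure $\sigma$ on $\mathcal G_{**}$ by biasing the law of $[\bs G, \bs o]$ with the mass $T_{\bs G}(\bs o, v)$ placed on each candidate second root $v$, namely
\[
\sigma(A) := \omid{\sum_{v\in V(\bs G)} T(\bs G, \bs o, v)\, \identity{A}[\bs G, \bs o, v]}.
\]
Using $T^+_{\bs G}(\bs o) = 1$ a.s.\ together with \eqref{eq:rootChange}, one checks directly that the two marginals are $\pi_{1*}\sigma = \mu$ and $\pi_{2*}\sigma = \mu'$.

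Next I would form $\tilde\sigma := s_*\sigma$, where $s[G, o, v] := [G, v, o]$ is the measurable swap of roots on $\mathcal G_{**}$; by construction $\pi_{1*}\tilde\sigma = \mu'$ and $\pi_{2*}\tilde\sigma = \mu$. The goal is to realize $\tilde\sigma$ in the form of the measure in \eqref{eq:rootChange} with $\mu'$ as the base law and a measurable reverse kernel $T'$ in place of $T$. Since $\mathcal G_{**}$ is a standard Borel space and $\pi_1:\mathcal G_{**}\to\mathcal G_*$ is measurable, a disintegration theorem produces a measurable family $[G, v] \mapsto K_{[G, v]}$ of probability measures on $\mathcal G_{**}$ with $K_{[G,v]}$ concentrated on $\pi_1^{-1}[G, v]$, such that $\tilde\sigma = \int K_{[G,v]}\, \mu'(d[G,v])$. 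The key observation is that each such fiber is the countable set $\{[G, v, u] : u \in V(G)\}$, so $K_{[G, v]}$ is purely atomic and the reverse kernel can be defined by
\[
T'(G, v, u) := K_{[G, v]}\bigl(\{[G, v, u]\}\bigr).
\]

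The main technical obstacle will be verifying that $T'$ is jointly measurable as a function on $\mathcal G_{**}$, rather than merely a measurable family of discrete measures indexed by $[G,v]$. I would address this by exploiting the fact that singletons in the standard Borel space $\mathcal G_{**}$ are Borel, together with the measurability of $[G, v] \mapsto K_{[G, v]}$, to extract joint measurability of $(G, v, u) \mapsto T'(G, v, u)$, allowing a modification on a $\mu'$-null set of first roots if necessary. Once this is in place, the normalization $T'^{+}_{\bs G'}(\bs o') = 1$ almost surely follows from each $K_{[G,v]}$ being a probability measure, and the identity
\[
\omid{\sum_u T'(\bs G', \bs o', u)\, \identity{A}[\bs G', u]} = \int (\identity{A}\circ\pi_2)\, d\tilde\sigma = \mu(A)
\]
shows that the root-change of $[\bs G', \bs o']$ by $T'$ is distributed as $[\bs G, \bs o]$, as required.
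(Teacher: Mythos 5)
Your overall route is the same as the paper's: both pass through a measure on doubly-rooted networks (your $\sigma$, the paper's $T^{\uparrow}\mathcal P_1$), exploit the symmetry under the root-swap, and then recover a kernel from the swapped measure by disintegration (the paper packages these three steps as \ref{conditionR}$\Rightarrow$\ref{conditionD}$\Rightarrow$\ref{conditionC} and then \ref{conditionC}$\Rightarrow$\ref{conditionR} in Lemma~\ref{lemma:implicationsCDR}). So in spirit this is not a different proof, just a direct reading off of the same idea.

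There is, however, a genuine gap in the normalization. The fiber $\pi_1^{-1}[G,v]=\{[G,v,u]:u\in V(G)\}$ is a set of \emph{equivalence classes}, and when $(G,v)$ admits a nontrivial automorphism, several distinct vertices $u$ collapse to the same class $[G,v,u]$. With your definition $T'(G,v,u):=K_{[G,v]}\bigl(\{[G,v,u]\}\bigr)$, the sum $T'^+_{G}(v)=\sum_{u\in V(G)}T'(G,v,u)$ counts each atom of $K_{[G,v]}$ with multiplicity equal to the number of vertices producing that class, so in general $T'^+_{G}(v)>1$; the claimed identity $T'^+_{\bs G'}(\bs o')=1$ and your final display both inherit the same overcounting. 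The fix is to divide by the multiplicity: set
\[
T'(G,v,u):=\frac{K_{[G,v]}\bigl(\{[G,v,u]\}\bigr)}{\card{\{u'\in V(G):[G,v,u']=[G,v,u]\}}},
\]
which is well defined because the denominator is finite (all such $u'$ lie at distance $d(v,u)$ from $v$ in a locally finite graph). Since $\identity{A}[G,u]$ is constant across vertices $u$ in a common class $[G,v,u]$, this corrected $T'$ satisfies both $T'^+_{G}(v)=1$ and $\sum_u T'(G,v,u)\identity{A}[G,u]=\int\identity{A}\circ\pi_2\,dK_{[G,v]}$, and the rest of the argument goes through. Note the paper faces a related but harsher multiplicity problem because its Lemma~\ref{lemma:implicationsCDR} disintegrates on $\mathcal G_*\times\mathcal G_*$, where the set $\{v\in V(G_1):[G_1,v]=[G_2,o_2]\}$ can be infinite; it copes by spreading the mass only over the finite set of closest such vertices. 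Your use of $\mathcal G_{**}$ avoids the infinite case but still needs the finite correction.
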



We are now ready to present the main definition. 

\begin{definition}
	\label{def:same}
	Let $[\bs G_1, \bs o_1]$ and $[\bs G_2, \bs o_2]$ be (not necessarily unimodular) random rooted networks. The following conditions are different definitions for $[\bs G_1,\bs o_1]$ and $[\bs G_2,\bs o_2]$ to be \defstyle{unrooted-equivalent} (or to \defstyle{have the same non-rooted networks}).
	
	\begin{description}
		\item [(B)\label{conditionB}]
		The distribution of each one is obtained from the other by a \textit{biasing} and then a \textit{root-change}.
		\item [(R)\label{conditionR}] 
		The distribution of $[\bs G_2,\bs o_2]$ is obtained from $[\bs G_1,\bs o_1]$ by a {root-change}.
		\item [(C)\label{conditionC}]
		There is a coupling of them (i.e. a probability measure on $\mathcal G_*\times\mathcal G_*$ whose marginals are identical with the distributions of $[\bs G_i,\bs o_i]$'s) which is concentrated on the set of pairs of rooted networks with the same non-rooted networks; i.e. $\{([G_1,o_1],[G_2,o_2]): [G_1]=[G_2]\}$. 
		\item [(D)\label{conditionD}]
		There is a random doubly-rooted  network $[\bs G, \bs o, \bs o']$ such that $[\bs G, \bs o]$ and $[\bs G, \bs o']$ have the same distributions as $[\bs G_1, \bs o_1]$ and $[\bs G_2, \bs o_2]$ respectively.
		\item [(F)\label{conditionF}]
		By forgetting the roots, the random non-rooted networks $[\bs G_1]$ and $[\bs G_2]$ have the same distribution on $(\mathcal G, J)$. Equivalently, the distributions of $[\bs G_1, \bs o_1]$ and $[\bs G_2,\bs o_2]$ agree on the invariant sigma-field $I$.
	\end{description}
\end{definition}

As mentioned in the introduction, the definition with Condition~\ref{conditionB} is used heuristically in some examples in the literature, some of which will be mentioned in Section~\ref{sec:extAndTrans}.

\subsection{Main Theorems}

Here, we study the 
implications between the conditions in Definition~\ref{def:same}. At first sight, Condition~\ref{conditionF} may seem weaker than the other ones, because the other conditions assume the existence of a third object. But this is not the case as shown below.




\begin{theorem}[Shift-Coupling]
	\label{thm:shiftCoupling}
	Let $[\bs G, \bs o]$ and $[\bs G', \bs o']$ be (not necessarily unimodular) random rooted networks. Then, $[\bs G',\bs o']$ can be obtained from $[\bs G,\bs o]$ by a root-change if and only if their distributions agree on the invariant sigma-field. In other words, conditions \ref{conditionF} and~\ref{conditionR} are equivalent.
\end{theorem}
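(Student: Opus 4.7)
The direction \ref{conditionR}$\Rightarrow$\ref{conditionF} is a one-line check. If $[\bs G',\bs o']$ has the distribution given by the root-change of $[\bs G,\bs o]$ by some kernel $T$ with $T_{\bs G}^+(\bs o)=1$ a.s., then for any $A\in I$ the very definition of $I$ gives $\indic{A}[\bs G,v]=\indic{A}[\bs G,\bs o]$ for every $v\in V(\bs G)$, so
\[
\myprob{[\bs G',\bs o']\in A}\;=\;\omid{\sum_{v\in V(\bs G)} T_{\bs G}(\bs o,v)\indic{A}[\bs G,v]}\;=\;\omid{\indic{A}[\bs G,\bs o]}\;=\;\myprob{[\bs G,\bs o]\in A}.
\]

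For the substantive implication \ref{conditionF}$\Rightarrow$\ref{conditionR}, write $\mu$ and $\mu'$ for the two distributions on $\mathcal G_*$, which by hypothesis agree on $I$. The task is to manufacture a measurable $T:\mathcal G_{**}\to\mathbb R^{\geq 0}$ with $T_{\bs G}^+(\bs o)=1$ a.s.\ such that the measure \eqref{eq:rootChange} produced from $\mu$ and $T$ equals $\mu'$. Morally, $T[G,o,\cdot]$ should be the $\mu'$-conditional law of the root on the orbit $\{[G,v]:v\in V(G)\}$, lifted back to $V(G)$ via $v\mapsto[G,v]$: given where we currently sit in the orbit, re-root according to $\mu'$ conditioned on that orbit. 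Because $\mu$ and $\mu'$ induce the same measure on $I$, such a conditional law exists orbit by orbit; the whole problem is to assemble these choices into a single measurable, isomorphism-invariant function on $\mathcal G_{**}$. My plan is to adapt the shift-coupling argument of Thorisson~\cite{Th96} to this equivalence-relation setting. Concretely, I would (i) observe that each orbit of $\pi:\mathcal G_*\to\mathcal G$ is countable, since $V(G)$ is countable; (ii) pick a Borel enumeration of the orbit of $[G,o]$ that depends only on $[G,o]$ (for instance via a fixed Borel linear order on the standard space $\mathcal G_*$) to obtain a covariant labelling of orbit points; (iii) on each orbit compute an orbit-wise Radon--Nikodym derivative of $\mu'$ against $\mu$ using finite approximations by $I$-measurable pieces on which both measures are absolutely continuous; (iv) pull the resulting orbit distribution back to $V(G)$, accounting with uniform weights for the finite fibre of $v\mapsto[G,v]$ over each isomorphism class. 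Normalising by $T_{\bs G}^+(\bs o)$ gives the required kernel. The equality of $\mu'$ and the root-change of $\mu$ by $T$ is then checked on a $\pi$-system generating the Borel $\sigma$-field, using $\mu|_I=\mu'|_I$ to handle the invariant factor and the construction of $T$ to handle the orbital factor.

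The principal obstacle is the non-standardness of $(\mathcal G,J)$ recorded in Proposition~\ref{prop:nonstandard}: it forbids a direct disintegration of $\mu$ (or $\mu'$) over $\pi$ and prevents any appeal to a regular conditional probability given $I$. I expect essentially all of the real work to go into this workaround, namely verifying that the orbit-by-orbit choices can be made measurably in $[G,o]$ and in a way that is invariant under rooted isomorphisms, so that the outcome descends from a function of $(\mathcal G_*,V)$-pairs to a bona fide function on isomorphism classes of doubly-rooted networks. Once $T$ has been produced, the distributional identity is a routine monotone-class computation.
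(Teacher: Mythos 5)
Your forward implication \ref{conditionR}$\Rightarrow$\ref{conditionF} is correct and matches the paper. The converse, however, has a genuine gap precisely at the step you flag as ``the real work,'' and the workaround you sketch cannot succeed. Steps (iii)--(iv) amount to disintegrating $\mu'$ over the orbits, i.e., producing, measurably and isomorphism-invariantly in $[G,o]$, a conditional law of $\mu'$ concentrated on the countable orbit $\{[G,v]:v\in V(G)\}$, and then lifting it to the kernel $T$. Such a disintegration does not exist. A regular conditional probability of $\mu'$ given $I$ does exist (since $\mathcal G_*$ is standard Borel), but it is in general \emph{not} supported on individual orbits: it concentrates on the $\mu'$-atoms of $I$, which are strictly coarser than orbits exactly because the root-change relation is non-smooth (Proposition~\ref{prop:nonstandard}). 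Your approximation by $I$-measurable pieces hits the same wall: on any sub-$\sigma$-field of $I$ the hypothesis forces $\mu$ and $\mu'$ to coincide, so the corresponding Radon--Nikodym derivatives are identically $1$ and carry no information about the within-orbit law of the root, which is the only thing $T$ needs to encode; meanwhile $\mu'$ need not be absolutely continuous w.r.t.\ $\mu$ on the full Borel $\sigma$-field (two deterministic roots in a non-transitive finite graph already break this), so an ``orbit-wise derivative'' of $\mu'$ against $\mu$ is ill-defined in that sense too.

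The paper avoids disintegration by two routes. The first proof mimics Thorisson's argument directly: fix a full-support normalized kernel $S$, iteratively peel off the common part $\lambda_n=(\transport{S}{P_n})\wedge(\transport{S}{P'_n})$ using Lemma~\ref{lem:thorisson} to lift $\lambda_n$ back to doubly-rooted measures $Q_n\leq S^\uparrow P_n$ and $Q'_n\leq S^\uparrow P'_n$, and show the residuals $P_\infty, P'_\infty$ vanish because their $S$-pushforwards are mutually singular while still agreeing on $I$; the accumulated $Q=\sum Q_n$ and $Q'=\sum Q'_n$ then supply the two root-changes to splice. The second proof invokes Feldman--Moore to present the root-change relation as the orbit relation of a countable group of Borel automorphisms of $\mathcal G_*$ and applies Thorisson's group-action shift-coupling theorem verbatim. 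Your step (ii) (a covariant Borel enumeration of the orbit) is exactly the Feldman--Moore ingredient, and you could rescue the proof by replacing (iii)--(iv) with an appeal to Thorisson's theorem for the resulting group action. As written, though, the plan presupposes an orbit-wise conditional law of $\mu'$ that does not exist.
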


Most results of this paper are based on the above Theorem.
Also, the chosen name \textit{shift-coupling} is justified in the notes in Subsection~\ref{subsec:shiftCoupling:notes}.
This result is the main part in the following implications.

\begin{theorem}
	\label{thm:implications}
	Conditions~\ref{conditionR}, \ref{conditionC}, \ref{conditionD} and \ref{conditionF} are equivalent and imply Condition~\ref{conditionB}.
\end{theorem}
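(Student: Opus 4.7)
The plan is to chain the four conditions \ref{conditionR}, \ref{conditionD}, \ref{conditionC}, \ref{conditionF} in a cycle using direct constructions, invoking Theorem~\ref{thm:shiftCoupling} only for the single hard step \ref{conditionF}$\Rightarrow$\ref{conditionR}, and then deduce \ref{conditionB} as a trivial consequence of \ref{conditionR}. Since \ref{conditionR}$\iff$\ref{conditionF} is exactly the content of Theorem~\ref{thm:shiftCoupling}, what remains is essentially routine measure-theoretic bookkeeping.

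First I would prove \ref{conditionR}$\Rightarrow$\ref{conditionD}. Given a kernel $T$ with $T_{\bs G}^+(\bs o)=1$ a.s.\ realizing the root-change, I define a random doubly-rooted network $[\bs G, \bs o, \bs o']$ by sampling $[\bs G,\bs o]$ from its distribution and then, conditionally, picking $\bs o'\in V(\bs G)$ with distribution $T_{\bs G}(\bs o,\cdot)$. The formula~\eqref{eq:rootChange} defining a root-change is precisely the statement that the marginal $\pi_{2*}$ of this law is the distribution of $[\bs G_2,\bs o_2]$, while $\pi_{1*}$ gives $[\bs G_1,\bs o_1]$ by construction; measurability of the map on $\mathcal G_{**}$ follows from that of $T$.

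Next, \ref{conditionD}$\Rightarrow$\ref{conditionC} is obtained by pushing the law of $[\bs G,\bs o,\bs o']$ forward through the measurable map $\mathcal G_{**}\to\mathcal G_*\times\mathcal G_*$ sending $[G,o,o']\mapsto ([G,o],[G,o'])$. The marginals are correct by hypothesis, and the image is supported on pairs $([G_1,o_1],[G_2,o_2])$ with $[G_1]=[G_2]$ by construction, since both roots live in the same underlying network. Then \ref{conditionC}$\Rightarrow$\ref{conditionF} is immediate: if $\nu$ is such a coupling on $\mathcal G_*\times\mathcal G_*$, then $\pi\circ\pi_1=\pi\circ\pi_2$ holds $\nu$-a.s., so pushing forward by $\pi\circ\pi_i$ gives the same law on $(\mathcal G,J)$ for $i=1,2$; this is \ref{conditionF}. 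The equivalent reformulation in terms of the invariant sigma-field $I$ comes from the bijection between probability measures on $(\mathcal G,J)$ and on $(\mathcal G_*,I)$ noted after Proposition~\ref{prop:nonstandard}.

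This closes the cycle \ref{conditionR}$\Rightarrow$\ref{conditionD}$\Rightarrow$\ref{conditionC}$\Rightarrow$\ref{conditionF}$\Rightarrow$\ref{conditionR}, where the last arrow is Theorem~\ref{thm:shiftCoupling}. Finally, to derive \ref{conditionB}, I take the trivial biasing by the constant function $1$: under \ref{conditionR}, $[\bs G_2,\bs o_2]$ is a root-change of $[\bs G_1,\bs o_1]$, and by Lemma~\ref{lemma:rootChange} the converse also holds, so each of the two laws is obtained from the other by a (trivial) bias followed by a root-change, which is \ref{conditionB}. The only non-routine ingredient is Theorem~\ref{thm:shiftCoupling} itself; everything else is a diagram chase through the definitions of root-change, doubly-rooted network, and the projection $\pi$.
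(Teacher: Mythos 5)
Your proof is correct, and the cycle $\ref{conditionR}\Rightarrow\ref{conditionD}\Rightarrow\ref{conditionC}\Rightarrow\ref{conditionF}\Rightarrow\ref{conditionR}$ together with $\ref{conditionR}\Rightarrow\ref{conditionB}$ does establish the theorem. The steps $\ref{conditionR}\Rightarrow\ref{conditionD}$ and $\ref{conditionD}\Rightarrow\ref{conditionC}$ match the paper's Lemma~\ref{lemma:implicationsCDR} essentially verbatim. Where you diverge is the remaining leg of the cycle: the paper closes the $\{\ref{conditionR},\ref{conditionC},\ref{conditionD}\}$ equivalence by proving $\ref{conditionC}\Rightarrow\ref{conditionR}$ directly, via an explicit construction of a transport kernel $T_{G_1}(o_1,\cdot)$ from the conditional law $\mu_{(G_1,o_1)}$ of the coupling (choosing mass spread uniformly over the finite set of closest vertices $v$ with $[G_1,v]=[G_2,o_2]$), and then grafts on $\ref{conditionR}\Leftrightarrow\ref{conditionF}$ from Theorem~\ref{thm:shiftCoupling}. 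You instead take the cheap step $\ref{conditionC}\Rightarrow\ref{conditionF}$ (pushing the coupling forward under $\pi\circ\pi_i$ and using that it is concentrated on $\{[G_1]=[G_2]\}$), and route $\ref{conditionC}\Rightarrow\ref{conditionR}$ through Theorem~\ref{thm:shiftCoupling}. This is more economical as a proof of the present theorem treating Theorem~\ref{thm:shiftCoupling} as a black box. Be aware, though, that it is only a genuine saving at the level of this theorem: the paper's explicit $\ref{conditionC}\Rightarrow\ref{conditionR}$ construction is invoked \emph{inside} both proofs of Theorem~\ref{thm:shiftCoupling} (and inside the proof of Lemma~\ref{lemma:rootChange}, which you also use for $\ref{conditionR}\Rightarrow\ref{conditionB}$), so in the paper's overall logical order that construction must come first and cannot be dispensed with. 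Your handling of $\ref{conditionR}\Rightarrow\ref{conditionB}$ via the trivial bias and Lemma~\ref{lemma:rootChange} to supply the reverse root-change is correct and in fact makes explicit something the paper glosses over as \emph{trivial}, namely that Condition~\ref{conditionB} demands the bias-then-root-change in both directions.
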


It should be noted that Condition~\ref{conditionB} does not imply the other conditions (see Remark~\ref{rem:weakVSstrong} below).
Theorem~\ref{thm:implications} allows us to define the following.

\begin{definition}
	\label{def:weakStrong}
	Under the assumptions of Definition~\ref{def:same}, $[\bs G_1,\bs o_1]$ and $[\bs G_2,\bs o_2]$ are \defstyle{weakly unrooted-equivalent} 
	if Condition~\ref{conditionB} holds and \defstyle{strongly unrooted-equivalent} if the other equivalent conditions hold.
\end{definition}

\begin{remark}
	\label{rem:weakVSstrong}
	According to Condition~\ref{conditionF}, when two random rooted networks are weakly unrooted-equivalent, 
	the distributions of the corresponding non-rooted networks may be different (but are always mutually absolutely continuous by Lemma~\ref{lemma:biasG}). This difference can be seen clearly in Example~\ref{ex:path}. 
\end{remark}

\subsection{Some Applications}
The following propositions are presented here as corollaries of Theorem~\ref{thm:shiftCoupling}. More important applications of the theorem will be presented in the next sections. 


\begin{proposition}
	\label{prop:conditioning}
	Let $[\bs G, \bs o]$ be a (not necessarily unimodular) random rooted network and $S$ be a covariant subset (Definition~\ref{def:covariantSubset}) such that $\myprob{\bs o\in S_{\bs G}}>0$. Denote by $[\bs G', \bs o']$ the random rooted network obtained by conditioning $[\bs G, \bs o]$ on $\bs o\in S_{\bs G}$. Then, the following are equivalent.
	\begin{enumerate}[(i)]
		\item $[\bs G', \bs o']$ can be obtained from $[\bs G, \bs o]$ by a root-change.
		\item $\probCond{\bs o\in S_{\bs G}}{I}$ is essentially constant. 
	\end{enumerate}
\end{proposition}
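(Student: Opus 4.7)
My plan is to reduce the proposition to a direct computation with conditional expectations via the shift-coupling theorem. By Theorem~\ref{thm:shiftCoupling}, condition (i) is equivalent to saying that the distributions of $[\bs G, \bs o]$ and $[\bs G', \bs o']$ agree on the invariant sigma-field $I$. So the entire task is to prove that this agreement on $I$ is equivalent to $\probCond{\bs o\in S_{\bs G}}{I}$ being essentially constant.

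Let $\mu$ denote the distribution of $[\bs G,\bs o]$, let $p:=\myprob{\bs o \in S_{\bs G}}$, and let $B:=\{[G,o]\in\mathcal G_*: o\in S_G\}$, which is measurable since $S$ is covariant. By definition of conditioning, the distribution $\mu'$ of $[\bs G',\bs o']$ satisfies $\mu'(A)=\mu(A\cap B)/p$ for any measurable $A$. For any $A\in I$, I would rewrite
\[
\mu'(A)=\frac{1}{p}\omid{\indic{[\bs G,\bs o]\in A}\indic{\bs o\in S_{\bs G}}}=\frac{1}{p}\omid{\indic{[\bs G,\bs o]\in A}\probCond{\bs o\in S_{\bs G}}{I}},
\]
where the second equality uses $A\in I$ to pull the indicator inside the conditional expectation. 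Agreement of $\mu$ and $\mu'$ on $I$ therefore amounts to
\[
\omid{\indic{[\bs G,\bs o]\in A}\bigl(\probCond{\bs o\in S_{\bs G}}{I}-p\bigr)}=0 \quad \text{for every } A\in I.
\]

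For the implication (ii)$\Rightarrow$(i), if $\probCond{\bs o\in S_{\bs G}}{I}$ is a.s.\ equal to some constant $c$, then taking $A=\mathcal G_*$ forces $c=p$, so the displayed integrand vanishes identically. Conversely, for (i)$\Rightarrow$(ii), the function $\probCond{\bs o\in S_{\bs G}}{I}-p$ is itself $I$-measurable, so applying the identity with $A=\{\probCond{\bs o\in S_{\bs G}}{I}>p\}$ and $A=\{\probCond{\bs o\in S_{\bs G}}{I}<p\}$ (both in $I$) shows this function is zero a.s., giving essential constancy.

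I do not expect a serious obstacle here: the only subtleties are formal ones, namely using that $A\in I$ lets the indicator $\indic{[\bs G,\bs o]\in A}$ be treated as $I$-measurable when pulled inside a conditional expectation, and remembering that Theorem~\ref{thm:shiftCoupling} is stated without any unimodularity hypothesis so it applies directly to $[\bs G,\bs o]$ here. Once these points are in place the argument is essentially a three-line manipulation.
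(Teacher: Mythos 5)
Your proof is correct and follows essentially the same route as the paper: reduce via Theorem~\ref{thm:shiftCoupling} to agreement of the two distributions on $I$, then compare their restrictions to $I$. The only cosmetic difference is that the paper routes the comparison through Lemma~\ref{lemma:biasG} (biasing $[\bs G]$ by $\probCond{\bs o\in S_{\bs G}}{I}$), whereas you unfold that lemma into the explicit tower-property computation and then test against the $I$-measurable sets $\{\probCond{\bs o\in S_{\bs G}}{I}>p\}$ and $\{\probCond{\bs o\in S_{\bs G}}{I}<p\}$ — same content, stated directly.
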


\begin{proof}
	The distribution of $[\bs G', \bs o']$ is obtained from that of $[\bs G, \bs o]$ by biasing by the function $\identity{\{\bs o\in S_{\bs G} \}}$. Lemma~\ref{lemma:biasG} implies that the distribution of $[\bs G']$ is obtained from that of $[\bs G]$ by biasing  by $\probCond{\bs o\in S_{\bs G}}{I}$ (considered as a function on $\mathcal G$).
	
	First, assume the bias function $\probCond{\bs o\in S_{\bs G}}{I}$ is essentially constant. It follows that  $[\bs G']$ and $[\bs G]$ are identically distributed; i.e. the distributions of $[\bs G, \bs o]$ and $[\bs G', \bs o']$ agree on the invariant sigma-field. 
	Thus, Theorem~\ref{thm:shiftCoupling} implies that $[\bs G',\bs o']$ can be obtained from $[\bs G, \bs o]$ by a root-change.
	
	Conversely, assume $[\bs G',\bs o']$ can be obtained from $[\bs G, \bs o]$ by a root-change. Theorem~\ref{thm:shiftCoupling} implies that their distributions agree on the invariant sigma-field. In other words, $[\bs G']$ and $[\bs G]$ have the same distribution. Since the former is obtained by biasing the latter by $\probCond{\bs o\in S_{\bs G}}{I}$, it follows that the bias function is essentially constant and the claim is proved.
	%
	%
	%
	%
	%
	%
\end{proof}



\begin{proposition}[Extra Head Scheme]
	\label{prop:extraHead}
	Let $[\bs G, \bs o]$ be a unimodular graph. Add i.i.d. marks in $\{0,1\}$ to the vertices with Bernoulli distribution with parameter $0<p\leq 1$. 
	If $[\bs G, \bs o]$ is infinite a.s. then there exists a root-change that when applied to $[\bs G, \bs o]$, the result is the same (in distribution) as $[\bs G, \bs o]$ except that the mark of the root is forced to be 1.
\end{proposition}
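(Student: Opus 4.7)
The plan is to cast this as an instance of Proposition~\ref{prop:conditioning}. Let $\bs m : V(\bs G) \to \{0, 1\}$ denote the i.i.d.\ Bernoulli($p$) vertex marks and regard $[\bs G, \bs o]$ as the resulting marked unimodular rooted network (adjoining i.i.d.\ marks preserves unimodularity). Define the covariant subset $S$ by $S_{\bs G} := \{v \in V(\bs G) : \bs m(v) = 1\}$. Then $\myprob{\bs o \in S_{\bs G}} = p > 0$, and the distribution of $[\bs G, \bs o]$ conditioned on $\bs o \in S_{\bs G}$ is exactly the target distribution described in the proposition (the marked network with the mark at the root forced to be $1$). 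By Proposition~\ref{prop:conditioning}, the required root-change exists if and only if $\probCond{\bs o \in S_{\bs G}}{I}$ is essentially constant; taking expectations then forces this constant to equal $p$.

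To verify the constancy, the plan is to use the ergodic decomposition of unimodular networks (see \cite{processes}): the distribution of $[\bs G, \bs o]$ is a mixture of extremal unimodular measures, the mixing parameter being $I$-measurable. Conditioning on $I$ thereby amounts to selecting an extremal component, and on each such component the invariant sigma-field is trivial. Consequently $\probCond{\bs o \in S_{\bs G}}{I}$ reduces to the unconditional probability $\myprob{\bs o \in S_{\bs G}}$ computed within the component.

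The main obstacle is to show that this within-component probability is $p$ for almost every component. This is where the hypothesis that $[\bs G, \bs o]$ is infinite a.s.\ enters: one appeals to the unimodular analogue of the ergodicity of Bernoulli shifts over an ergodic base, namely that i.i.d.\ Bernoulli marking of an infinite extremal unimodular network remains extremal. Granted this, the ergodic decomposition of the marked network refines that of the unmarked $[\bs G, \bs o]$ only by the added independent Bernoulli structure, so within every extremal component the mark at $\bs o$ is still Bernoulli($p$) and has expectation $p$. Plugging this back into Proposition~\ref{prop:conditioning} produces the desired root-change and completes the argument.
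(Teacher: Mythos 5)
Your proof is correct and starts exactly as the paper does: cast the target as conditioning on $m(\bs o)=1$ (equivalently on $\bs o\in S_{\bs G}$ for the covariant set $S$ of $1$-marked vertices) and reduce via Proposition~\ref{prop:conditioning} to showing that $\probCond{m(\bs o)=1}{I}$ is essentially constant. From there the two arguments diverge in how they establish this constancy. You invoke the ergodic decomposition of unimodular networks together with the extremality-preservation property of i.i.d.\ Bernoulli marking on infinite networks (the first assertion of Lemma~\ref{lemma:ergodic}), deducing that on almost every extremal component the root mark remains Bernoulli($p$) and hence the conditional probability is identically $p$. The paper instead uses the second, sharper assertion of Lemma~\ref{lemma:ergodic} --- that for any invariant event $A$, $\probCond{A}{[\bs G,\bs o]}$ is $\{0,1\}$-valued and root-independent, \emph{without} first restricting to the extremal case --- to conclude that $A$ is conditionally independent of $m(\bs o)$ given the unmarked network, and then computes $\omid{m(\bs o)\identity{A}}=p\,\omid{\identity{A}}$ directly, bypassing ergodic decomposition entirely. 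Both routes rest on the same load-bearing lemma; your argument is perhaps more structural, but it imports the ergodic decomposition of unimodular measures, which is a nontrivial fact from~\cite{processes} (recall $(\mathcal G,J)$ is not standard, Proposition~\ref{prop:nonstandard}), whereas the paper's direct computation avoids that extra machinery.
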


The condition of being infinite is necessary in this proposition as explained in Remark~\ref{rem:extrahead}.
See also~\cite{BeLySc} for the precise definition of adding i.i.d. marks to the vertices. The name \textit{extra head scheme} is borrowed from an analogous definition in~\cite{HoPe05} as will be explained in Section~\ref{sec:analogy}.

\begin{proof}[Proof of Proposition~\ref{prop:extraHead}]
	Note that the desired random rooted network can be obtained by conditioning $[\bs G, \bs o]$ on $m(\bs o)=1$, where $m(\cdot)$ denotes the marks of the vertices. Therefore, by Proposition~\ref{prop:conditioning}, it is enough to prove that $\probCond{m(\bs o)=1}{I}$ is essentially constant.
	Let $A\in I$ be an invariant event. By Lemma~\ref{lemma:ergodic} below,  $\probCond{A}{[\bs G, \bs o]}$ is $\{0,1\}$-valued and does not depend on the root a.s. Therefore, conditioned on $[\bs G, \bs o]$, $A$ is independent of any random variable including $m(\bs o)$. Thus,
	\[
	\omid{m(\bs o) \identity{A}} = \omid{\omidCond{m(\bs o)}{[\bs G, \bs o]} \omidCond{\identity{A}}{[\bs G, \bs o]}} = \omid{p \omidCond{\identity{A}}{[\bs G, \bs o]}} = \omid{p\identity{A}}.
	\]
	This equation for all $A\in I$ implies that $\omidCond{m(\bs o)}{I}=p$ a.s. So $\probCond{m(\bs o)=1}{I}=p$ a.s. and the claim is proved.
\end{proof}

The following lemma is used in the proof of Proposition~\ref{prop:extraHead} and is interesting in its own. 
It is similar to the ergodicity of the Bernoulli point process on $\mathbb Z^d$ or the Poisson point process in $\mathbb R^d$ (see Section~\ref{sec:analogy}). 

\begin{lemma}
	\label{lemma:ergodic}
	Let $[\bs G, \bs o]$ be a unimodular graph and $[\bs G', \bs o']$ be a random network obtained by adding i.i.d. marks to the vertices of $[\bs G, \bs o]$. If $[\bs G, \bs o]$ is extremal and almost surely infinite, then so is $[\bs G', \bs o']$. More generally, if $[\bs G, \bs o]$ is infinite a.s., then for any invariant event $A\in I$, 
	\begin{equation}
	\label{eq:ergodic:1}
	\probCond{[\bs G', \bs o']\in A}{[\bs G, \bs o]}\in \{0,1\}, \quad a.s.
	\end{equation}
	and the left hand side does not depend on the root a.s.
\end{lemma}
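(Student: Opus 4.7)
Let $X := \probCond{[\bs G', \bs o']\in A}{[\bs G, \bs o]}$. My plan is to first show that $X$ is $I$-measurable, and then to establish the dichotomy $X\in\{0,1\}$ almost surely.

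Given $[\bs G, \bs o] = [G, o]$, the conditional law of the i.i.d.\ Bernoulli marks on $V(G)$ depends only on the non-rooted graph $[G]$. Since $A\in I$, the indicator $\identity{A}([\bs G', \bs o'])$ depends on $[\bs G', \bs o']$ only through its underlying non-rooted network. Combining these two observations, $X([G, o])$ depends only on $[G]$, which is precisely the root-independence claim.

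For the dichotomy, I would reduce to the extremal case via the ergodic decomposition of unimodular networks: conditional on the invariant sigma-field $I$, a unimodular distribution is extremal unimodular almost surely (see~\cite{processes}). Since $X$ is $I$-measurable by the previous step, iterated conditioning gives $X = \omidCondL{\identity{A}([\bs G', \bs o'])}{I}$, so it suffices to establish the first sentence of the lemma, namely that $[\bs G', \bs o']$ is extremal whenever $[\bs G, \bs o]$ is extremal and a.s.\ infinite. Fix $A\in I$ and set $c:=\myprob{[\bs G', \bs o']\in A}$; in the extremal case $X\equiv c$ a.s., and the goal becomes $c\in\{0,1\}$.

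For this extremal case, I would combine Kolmogorov's 0-1 law (the marks $(m(v))_v$ are i.i.d.\ conditional on the graph, so their tail sigma-field is conditionally trivial) with the covariant subset dichotomy of Lemma~\ref{lemma:happensAtRoot} applied to the unimodular marked network $[\bs G', \bs o']$. Consider the covariant subset
\[
D := \{ v\in V(\bs G) : \text{flipping } m(v) \text{ changes } \identity{A}([\bs G', \bs o'])\}.
\]
If $D = \emptyset$ almost surely, then $\{[\bs G', \bs o']\in A\}$ is, given the graph, in the tail sigma-field of the marks, so Kolmogorov's law yields $c\in\{0,1\}$. Otherwise Lemma~\ref{lemma:happensAtRoot} forces $\myprob{\bs o\in D}>0$, and this is where the main difficulty lies: one must rule this out. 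I expect the argument to exploit that on $\{\bs o\in D\}$, $\identity{A}$ is a deterministic non-constant function of $m(\bs o)$ given the other marks and the graph, and then to use a further mass-transport argument to propagate this local dependence into a contradiction with $A$ being genuinely non-rooted.
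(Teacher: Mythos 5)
Your step (1) — that $X=\probCond{[\bs G',\bs o']\in A}{[\bs G,\bs o]}$ is root-independent deterministically, by direct inspection of the construction — is correct and in fact cleaner than the paper's treatment: the paper instead proves an a.s.\ version of this using the mass transport principle (substituting positive and negative parts of $f[G,o]-f[G,v]$ into the transport inequality). The reduction to the extremal case via ergodic decomposition is also plausible, though the paper goes in the opposite direction (proving~\eqref{eq:ergodic:1} directly, extremality being a corollary).

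The gap is in the extremal case, and it is not a small one. Your pivotality argument has two problems. First, it implicitly assumes the marks are Bernoulli (``flipping $m(v)$''), while the lemma is stated for arbitrary i.i.d.\ marks. Second, and more importantly, even in the Bernoulli case, $D=\emptyset$ a.s.\ does \emph{not} put $\{[\bs G',\bs o']\in A\}$ in the tail sigma-field in any sense to which Kolmogorov's 0-1 law applies. Pivotality is a realization-by-realization statement: $D(m)=\emptyset$ for a.e.\ $m$ does not mean that resampling the marks on a finite set leaves $\identity A$ unchanged (indeed, chaining single flips along a path between two configurations moves through configurations of probability zero, where pivotality may well fail). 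More fundamentally, Kolmogorov's law requires a canonical exhaustion $V_1\subset V_2\subset\cdots$ of the vertex set, and an arbitrary extremal unimodular graph has no equivariant choice of one. The paper's proof circumvents precisely this by using the delayed random-walk kernel $t_{\bs G}$: because $\bs G$ is infinite, the $k$-step kernel satisfies $t_{\bs G}(\cdot,\cdot)\to 0$ pointwise as $k\to\infty$, giving a ``randomized exhaustion'' that, combined with an approximation of $A$ by a local event $A_n$, yields the inequality $\omid{f[\bs G,\bs o]}\le\omid{f[\bs G,\bs o]^2}+5\epsilon$ and hence $f\in\{0,1\}$ a.s. The mixing step driven by the random walk is the crux, and it is missing from your argument; the ``further mass-transport argument'' you gesture at in the $\{\bs o\in D\}$ branch would have to recreate it.
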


Note that in the statement of the lemma, the natural coupling of $[\bs G, \bs o]$ and $[\bs G', \bs o']$ is considered to enable us to condition $[\bs G', \bs o']$ on $[\bs G, \bs o]$. The proof is presented in Section~\ref{sec:proofs}.

\begin{remark}
	\label{rem:extrahead}
	The claims of Lemma~\ref{lemma:ergodic} and Proposition~\ref{prop:extraHead} are false for any finite unimodular network. Note that in this case, conditioned on $[\bs G, \bs o]$, with positive probability the marks of all vertices are 0. This contradicts~\eqref{eq:ergodic:1}. Also, the same property holds in any root-change of the network, contradicting the claim of Proposition~\ref{prop:extraHead}.
\end{remark}

\subsection{Notes}
\label{subsec:shiftCoupling:notes}
The name \textit{shift-coupling} for Theorem~\ref{thm:shiftCoupling} is borrowed from the analogous result of~\cite{Th96}. This result studies when two random elements in a space equipped with some group action have a coupling such that the second one is obtained from the first by a shift corresponding to a random element of the group, called a shift-coupling in the literature.
{Here, instead of a group action, we have root-changes as in Condition~\ref{conditionC} of Definition~\ref{def:same}, which don't form a group. 
	In Section~\ref{sec:proofs}, a proof of Theorem~\ref{thm:shiftCoupling} is presented by mimicking that of~\cite{Th96}. A second proof is also presented using the result of~\cite{Th96}.
	With this proof, one can generalize Theorem~\ref{thm:shiftCoupling} to the context of Borel equivalence relations as follows.
	
The following definitions are borrowed from~\cite{FeMo77}.
	An equivalence relation $R$ on a Polish space $X$ is a \defstyle{{countable Borel equivalence relation}} if when considered as a subset of $X\times X$, it is a Borel subset and each equivalence class is countable. The \defstyle{$R$-invariant sigma-field} on $X$ consists of Borel subsets of $X$ which are formed by unions of $R$-equivalence classes. In the following result, a Borel automorphism $F:X\rightarrow X$ is called {\defstyle{$R$-stabilizing}} if $F(x)Rx$ for each $x\in X$.

	\begin{customthm}{1'}
		Let $R$ be a countable Borel equivalence relation on $X$ and $Y_1$ and $Y_2$ be random elements in $X$.
		Then there exists a random $R$-stabilizing Borel automorphism $F$ such that $F(Y_1)$ has the same distribution as $Y_2$ if and only if the distributions of $Y_1$ and $Y_2$ agree on the $R$-invariant sigma-field.
	\end{customthm}
	
	In fact, in the converse, $F$ can be chosen to be supported on countably many automorphisms.
	As mentioned above, the proof of this theorem is similar to one of the proofs given for Theorem~\ref{thm:shiftCoupling} and is skipped here.
	
	A Borel equivalence relation $R$ is \defstyle{smooth} if the quotient space $X/R$ with the induced Borel structure is a standard Borel space. Therefore, Proposition~\ref{prop:nonstandard} just claims that the equivalence relation on $\mathcal G_*$ induced by $\pi$ (see the proof of Theorem~\ref{thm:shiftCoupling}) is not smooth, which is implied by Corollary~1.3 of~\cite{kechris}. A direct proof is also presented in Section~\ref{sec:proofs}.
}

\section{The Unimodular Case and Balancing Transport Kernels}
\label{sec:unimodularCase}

In this section, some applications of Theorem~\ref{thm:shiftCoupling} are presented for the case of unimodular networks. The main results are theorems~\ref{thm:unimodularUnique} and~\ref{thm:balancing} whose
proofs are postponed to the end of the section after presenting some minor results. 

\begin{theorem}[Uniqueness]
	\label{thm:unimodularUnique}
	The distribution of a unimodular network $[\bs G, \bs o]$ is uniquely determined by its restriction to the invariant sigma-field (or equivalently, by the distribution of the non-rooted network $[\bs G]$).  In other words, if two unimodular networks are strongly unrooted-equivalent, 
	then they are identically distributed.
	
\end{theorem}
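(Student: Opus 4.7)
The plan is to use Theorem~\ref{thm:shiftCoupling} to realize $\mu_2$ as an explicit root-change of $\mu_1$, extract the Radon-Nikodym density $d\mu_2/d\mu_1$ from the mass transport principle, and then use unimodularity of both measures to force that density to equal $1$. Write $\mu_1$ and $\mu_2$ for the two unimodular distributions, assumed to agree on $I$. By Theorem~\ref{thm:shiftCoupling} there exists a transport kernel $T$ with $T^+_{\bs G}(\bs o) = 1$ $\mu_1$-a.s.\ such that $\mu_2$ is the root-change of $\mu_1$ by $T$.

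Applying the mass transport principle for $\mu_1$ to $g(G, u, v) := T_G(u, v) f[G, v]$ for arbitrary measurable $f \geq 0$, the left-hand side $\mathbb E_{\mu_1}[g^+_{\bs G}(\bs o)]$ equals $\mathbb E_{\mu_2}[f]$ by the root-change formula, while the right-hand side $\mathbb E_{\mu_1}[g^-_{\bs G}(\bs o)]$ equals $\mathbb E_{\mu_1}[f \cdot W]$, where $W[G, o] := T^-_G(o) = \sum_{v} T_G(v, o)$. This identifies $d\mu_2/d\mu_1 = W$; the density $W$ is $\mu_1$-a.s.\ finite since $\mathbb E_{\mu_1}[W] = \mathbb E_{\mu_1}[T^+_{\bs G}(\bs o)] = 1$ by unimodularity of $\mu_1$.

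The hypothesis that $\mu_1$ and $\mu_2$ agree on $I$, tested against any bounded $I$-measurable $h$, gives $\mathbb E_{\mu_1}[h W] = \mathbb E_{\mu_2}[h] = \mathbb E_{\mu_1}[h]$, hence $\mathbb E_{\mu_1}[W \mid I] = 1$ $\mu_1$-a.s. It therefore suffices to show that $W$ is itself $I$-measurable modulo $\mu_1$-null sets. To this end, unimodularity of $\mu_2$ rewrites via the density $W$ as $\mathbb E_{\mu_1}[W\, g^+_{\bs G}(\bs o)] = \mathbb E_{\mu_1}[W\, g^-_{\bs G}(\bs o)]$ for every measurable $g \geq 0$, whereas unimodularity of $\mu_1$ applied to $\tilde g(G, u, v) := W[G, u]\, g(G, u, v)$ gives $\mathbb E_{\mu_1}[W\, g^+_{\bs G}(\bs o)] = \mathbb E_{\mu_1}[\sum_v W[\bs G, v]\, g(\bs G, v, \bs o)]$. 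Subtracting these yields
\[
\mathbb E_{\mu_1}\!\Bigl[\sum_{v \in V(\bs G)} (W[\bs G, v] - W[\bs G, \bs o])\, g(\bs G, v, \bs o)\Bigr] = 0
\]
for every $g \geq 0$. Specializing to $g(G, v, o) := (W[G, v] - W[G, o])^+$ makes the integrand a sum of non-negative squares, forcing $W[\bs G, v] \le W[\bs G, \bs o]$ for all $v$, $\mu_1$-a.s.; the symmetric choice $g = (W[G, o] - W[G, v])^+$ gives the reverse inequality, so $W$ is constant along the vertices of $\bs G$, $\mu_1$-a.s.

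Combining with $\mathbb E_{\mu_1}[W \mid I] = 1$ then yields $W = 1$ $\mu_1$-a.s., and hence $\mu_1 = \mu_2$. The main obstacle is the last step — converting an equality in expectation into pointwise root-invariance of the density — which requires using both mass transport principles simultaneously with sign-definite test functions (and possibly a mild truncation of $g$ to justify the subtraction, since $\tilde g$ need not be globally integrable). Once $W$ is known to be $I$-measurable mod $\mu_1$, the conclusion is immediate.
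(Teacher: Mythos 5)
Your proof is correct and follows essentially the same route as the paper's. What you carry out by hand is exactly what the paper packages into intermediate lemmas: deriving $d\mu_2/d\mu_1 = W := T^-_{\bs G}(\bs o)$ from the mass transport principle for $\mu_1$ is precisely Lemma~\ref{lemma:unimodularBias}(i), and the argument that unimodularity of both measures forces $W$ to be constant along the vertices a.s.\ (by testing with the positive and negative parts of $W[\bs G,v]-W[\bs G,\bs o]$) is the content of Lemma~\ref{lemma:unimodularBiasUni}, bundled in the paper as Lemma~\ref{lemma:unimodularNonErgodic}. The final step---replacing $W$ by an $I$-measurable modification on the full-measure event where it is root-independent, so that $W=\omidCond{W}{I}=1$ a.s.---matches the paper's closing argument via Lemma~\ref{lemma:biasG}. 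Your caveat about truncating $g$ to justify the subtraction is a reasonable technical point that the paper's proof of Lemma~\ref{lemma:unimodularBiasUni} also elides; it is handled by restricting to bounded $g$ supported on bounded-radius balls and passing to a limit, so no gap.
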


Theorem~\ref{thm:unimodularUnique} is a precise formulation of a comment in~\cite{processes} saying that `intuitively, the distribution of the root is forced given the distribution of the unrooted network'.
Note also that if we replace `strongly' with `weakly' in this theorem, the claim no longer holds. This case will be considered in Lemma~\ref{lemma:unimodularNonErgodic} and Proposition~\ref{prop:unimodularErgodic} below.

\begin{theorem}[Balancing Transport Kernel]
	\label{thm:balancing}
	Let $[\bs G, \bs o]$ be a unimodular network and $w_i:\mathcal G_*\rightarrow\mathbb R^{\geq 0}$ be measurable functions for $i=1,2$. Assume $\omidCond{w_1}{I}<\infty$ a.s. Then, the following are equivalent.
	\begin{enumerate}[(i)]
		\item 	\label{thm:balancing:1}
		There is an {invariant transport kernel} that almost surely balances between the functions $w_1[\bs G, \cdot]$ and $w_2[\bs G, \cdot]$ on the vertices; i.e. a measurable function $T:\mathcal G_{**}\rightarrow\mathbb R^{\geq 0}$ such that almost surely, $T^+_{\bs G}(v)=w_1[\bs G,v]$ and $T^-_{\bs G}(v)=w_2[\bs G,v]$ for all $v\in V(\bs G)$.
		\item 	\label{thm:balancing:2}
		One has
		\begin{equation}
		\label{eq:thm:balancing0}
		\omidCond{w_1[\bs G, \bs o]}I = \omidCond{w_2[\bs G, \bs o]}{I}.
		\end{equation}
	\end{enumerate}
\end{theorem}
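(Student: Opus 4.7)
The plan is to handle $(\ref{thm:balancing:1}) \Rightarrow (\ref{thm:balancing:2})$ by the mass transport principle applied against invariant indicators, and $(\ref{thm:balancing:2}) \Rightarrow (\ref{thm:balancing:1})$ by biasing and then invoking Theorem~\ref{thm:shiftCoupling}. For the forward direction, fix $A \in I$. Since $A$ is root-invariant, $\identity{A}[G, u] = \identity{A}[G, v]$ for all vertices of a common $G$, so $g_G(u, v) := T_G(u, v) \identity{A}[G, u]$ is a legitimate invariant transport kernel. Unimodularity applied to $g$ gives
\[
\omid{w_1[\bs G, \bs o] \identity{A}} = \omid{T^+_{\bs G}(\bs o)\identity{A}} = \omid{T^-_{\bs G}(\bs o)\identity{A}} = \omid{w_2[\bs G, \bs o] \identity{A}},
\]
and varying $A$ over $I$ yields $(\ref{thm:balancing:2})$.

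For the reverse direction, the first step is to reduce to the case $0 < \omid{w_1[\bs G, \bs o]} = \omid{w_2[\bs G, \bs o]} < \infty$. On the invariant event $\{\omidCond{w_1}{I} = 0\}$, the covariant subset $\{v : w_i[\bs G, v] > 0\}$ avoids the root, hence is empty a.s.\ by Lemma~\ref{lemma:happensAtRoot}, so $T \equiv 0$ works there. On its complement, bias $[\bs G, \bs o]$ by the $I$-measurable, strictly positive weight $h := (1 + \omidCond{w_1}{I})^{-1}$: biasing by an $I$-measurable function preserves unimodularity, leaves all $I$-conditional expectations intact, and is mutually absolutely continuous with the original law, so any almost sure identity for $T$ transfers back. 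After this biasing, $\omid{w_i[\bs G, \bs o]}$ is finite and the two values still agree by $(\ref{thm:balancing:2})$. Now bias further by $w_i$ to obtain $[\bs G^{(i)}, \bs o^{(i)}]$ for $i = 1, 2$. By Lemma~\ref{lemma:biasG}, $[\bs G^{(i)}]$ is obtained from $[\bs G]$ by biasing by $\omidCond{w_i}{I}$; these two biases are equal, so $[\bs G^{(1)}, \bs o^{(1)}]$ and $[\bs G^{(2)}, \bs o^{(2)}]$ agree on the invariant sigma-field. Theorem~\ref{thm:shiftCoupling} then supplies a root-change kernel $S$ with $S^+_{\bs G}(\bs o) = 1$ carrying $[\bs G^{(1)}, \bs o^{(1)}]$ to $[\bs G^{(2)}, \bs o^{(2)}]$.

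Set $T_G(u, v) := w_1[G, u]\, S_G(u, v)$. Then $T^+_G(u) = w_1[G, u]$ holds by construction. Unfolding the root-change identity against an arbitrary non-negative measurable $f: \mathcal G_* \to \mathbb R^{\geq 0}$ and unwinding the biasings gives
\[
\omid{\sum_v T_{\bs G}(\bs o, v) f[\bs G, v]} = \omid{w_2[\bs G, \bs o] f[\bs G, \bs o]};
\]
applying the mass transport principle to the kernel $(u, v) \mapsto T_G(u,v) f[G,v]$ rewrites the left side as $\omid{T^-_{\bs G}(\bs o) f[\bs G, \bs o]}$, so varying $f$ forces $T^-_{\bs G}(\bs o) = w_2[\bs G, \bs o]$ a.s., and Lemma~\ref{lemma:happensAtRoot} then upgrades this to the pointwise identity $T^-_{\bs G}(v) = w_2[\bs G, v]$ at every vertex. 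The main obstacle is the opening reduction: one must confirm that biasing by the $I$-measurable weight $h$ genuinely preserves unimodularity, keeps $\omidCond{w_i}{I}$ invariant, and allows a.s.\ identities for $T^{\pm}$ to be pulled back to the original measure. Once that is in place, the construction of $T$ is a direct pairing of Theorem~\ref{thm:shiftCoupling} with the mass transport principle.
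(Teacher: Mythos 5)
Your proof takes essentially the same route as the paper's. The forward direction is the identical mass-transport argument with the kernel $g_G(u,v)=T_G(u,v)\identity{A}[G,u]$. The reverse direction likewise biases by $w_i$, invokes Theorem~\ref{thm:shiftCoupling} to get a root-change kernel $S$ with $S^+_{\bs G}(\bs o)=1$, sets $T_G(u,v):=w_1[G,u]S_G(u,v)$, uses unimodularity to force $T^-_{\bs G}(\bs o)=w_2[\bs G,\bs o]$ a.s., and upgrades to all vertices via Lemma~\ref{lemma:happensAtRoot}. This is the paper's construction.

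The one genuine difference is your opening reduction. The paper biases directly by $w_i$, which by Definition~\ref{def:bias} requires $0<\omid{w_i[\bs G,\bs o]}<\infty$, while the hypothesis only gives $\omidCond{w_1}{I}<\infty$ a.s.\ (so $\omid{w_1}$ may be infinite, and $\omidCond{w_1}{I}$ may vanish with positive probability). Your step of first dispatching the invariant event $\{\omidCond{w_1}{I}=0\}$ (where $T\equiv 0$ works, since Lemma~\ref{lemma:happensAtRoot} forces $w_i\equiv 0$ there) and then biasing by the $I$-measurable, bounded, strictly positive weight $h=(1+\omidCond{w_1}{I})^{-1}$ is exactly the patch needed to make the biasing legitimate in general. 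Because $h$ is $I$-measurable, Lemma~\ref{lemma:unimodularBiasUni} ensures unimodularity is preserved, $I$-conditional expectations are unchanged, and mutual absolute continuity lets the a.s.\ identities for $T^{\pm}$ transfer back to the original law. So: same approach and same key construction, but you fill a small technical gap that the paper leaves implicit.
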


Theorem~\ref{thm:balancing} is analogous to similar results for stationary point processes and random measures (\cite{HoPe05} and~\cite{ThLa09}). This analogy will be explained in Section~\ref{sec:analogy}. 

\begin{remark}
	A result similar to Proposition~\ref{prop:conditioning} holds with the assumptions of Theorem~\ref{thm:balancing}.
	For $i=1,2$, consider biasing the distribution of a (not necessarily unimodular) random rooted network $[\bs G, \bs o]$ by a function $w_i$. Then, the resulting random rooted networks are always weakly unrooted-equivalent, 
	but this holds strongly 
	if and only if the ratio $\omidCond{w_1[\bs G, \bs o]}I / \omidCond{w_2[\bs G, \bs o]}{I}$ is essentially constant. However, the existence of a balancing transport kernel as in Theorem~\ref{thm:balancing} is only proved for the unimodular case.
\end{remark}

Before proving the above theorems, we present some other minor results in the unimodular case.

\begin{lemma}
	\label{lemma:unimodularBias}
	Let $[\bs G_1, \bs o_1]$ be a unimodular network and $[\bs G_2,\bs o_2]$ be an arbitrary random rooted network.
	\begin{enumerate}[(i)]
		\item \label{lemma:unimodularBias:1}
		If $[\bs G_2, \bs o_2]$ is a root-change of $[\bs G_1, \bs o_1]$ by kernel $T$, then it can also be obtained by biasing $[\bs G_1, \bs o_1]$ by the function $[G,o]\mapsto T_{G}^-(o)$.
		\item \label{lemma:unimodularBias:2}
		If $[\bs G_2,\bs o_2]$ is weakly unrooted-equivalent to $[\bs G_1, \bs o_1]$, 
		then the distribution of $[\bs G_2,\bs o_2]$ is obtained from that of $[\bs G_1, \bs o_1]$ by only a biasing (i.e. is absolutely continuous w.r.t. the distribution of $[\bs G_1, \bs o_1]$).
	\end{enumerate}
\end{lemma}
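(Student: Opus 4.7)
For part (i), the plan is a direct appeal to the mass transport principle. The distribution of the root-change applied to a measurable event $A\subseteq\mathcal G_*$ is
\[
\mu_2(A) = \omid{\sum_{v\in V(\bs G_1)} T_{\bs G_1}(\bs o_1, v)\, \identity{A}[\bs G_1, v]},
\]
while biasing $[\bs G_1,\bs o_1]$ by $T^-$ gives
\[
\nu(A) = \frac{1}{\omid{T^-_{\bs G_1}(\bs o_1)}} \omid{T^-_{\bs G_1}(\bs o_1)\, \identity{A}[\bs G_1, \bs o_1]}.
\]
To equate these, I would apply the mass transport principle to the invariant transport kernel $g(G,u,v):=T_G(u,v)\identity{A}[G,v]$, for which $g^+_G(o)=\sum_v T_G(o,v)\identity{A}[G,v]$ and $g^-_G(o)=T^-_G(o)\identity{A}[G,o]$. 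Applying MTP once more to $T$ itself gives $\omid{T^-_{\bs G_1}(\bs o_1)}=\omid{T^+_{\bs G_1}(\bs o_1)}=1$ by hypothesis, so the normalizing constant is $1$ and $\mu_2=\nu$.

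For part (ii), the plan is to combine the biasing and root-change into a single ``weighted'' transport and apply MTP once to that. By Condition~\ref{conditionB}, there exist a measurable $w\geq 0$ with $0<\omid{w[\bs G_1,\bs o_1]}<\infty$ and a kernel $T$ such that $T^+=1$ almost surely under the measure biased by $w$, and $[\bs G_2,\bs o_2]$ has distribution
\[
\mu_2(A) = \frac{1}{\omid{w[\bs G_1,\bs o_1]}} \omid{w[\bs G_1, \bs o_1] \sum_{v} T_{\bs G_1}(\bs o_1, v)\, \identity{A}[\bs G_1, v]}.
\]
Now I would apply the mass transport principle to the invariant kernel $h(G,u,v):=w[G,u]\,T_G(u,v)\,\identity{A}[G,v]$. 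The outgoing sum is $h^+_G(o)=w[G,o]\sum_v T_G(o,v)\identity{A}[G,v]$ (the integrand appearing above), and the incoming sum is $h^-_G(o)=\identity{A}[G,o]\sum_v w[G,v]\,T_G(v,o)$. Unimodularity of $[\bs G_1,\bs o_1]$ therefore yields
\[
\mu_2(A) = \frac{1}{\omid{w[\bs G_1,\bs o_1]}} \omid{ f[\bs G_1, \bs o_1]\, \identity{A}[\bs G_1, \bs o_1]},
\]
where $f[G,o]:=\sum_{v} w[G,v]\,T_G(v,o)$. This exhibits $\mu_2$ as a biasing of $\mu_1$ by $f$; checking that the normalization is correct amounts to one more MTP application (to $w[G,u]T_G(u,v)$), which gives $\omid{f[\bs G_1,\bs o_1]}=\omid{w[\bs G_1,\bs o_1]\,T^+_{\bs G_1}(\bs o_1)}=\omid{w[\bs G_1,\bs o_1]}$.

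The main conceptual obstacle is that the intermediate network obtained after biasing by $w$ is typically \emph{not} unimodular, so one cannot invoke MTP after the root-change has already been performed. The trick is to resist performing the two operations sequentially and instead fold both the weight $w$ and the kernel $T$ into one invariant transport $h$, which is then handled by a single application of MTP to the original unimodular $[\bs G_1,\bs o_1]$. A minor bookkeeping point is that $T^+=1$ is only assumed on the support of the biased measure; I would record this by noting $w[\bs G_1,\bs o_1](1-T^+_{\bs G_1}(\bs o_1))=0$ almost surely under $\mu_1$, which is all that is needed for the MTP computations above.
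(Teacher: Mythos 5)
Your proof of part (i) is essentially identical to the paper's: same transport kernel $g[G,o,v]=T_G(o,v)\identity{A}[G,v]$, one application of the mass transport principle, and the normalization $\omid{T^-_{\bs G_1}(\bs o_1)}=1$ obtained by taking $A=\mathcal G_*$ (equivalently, MTP applied to $T$ itself). Nothing to add there.

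Part (ii) is where you diverge from the paper, and your route is both different and better justified. The paper's proof is a single sentence: it invokes part (i) together with Condition (B) to assert that $\mu_2$ is a composition of two biasings of $\mu_1$. But in Condition (B) the root-change is applied to the \emph{biased} measure, which is not unimodular, so part (i) does not literally apply to the root-change step. You identified exactly this obstacle (``one cannot invoke MTP after the root-change has already been performed'') and resolved it correctly by refusing to treat the two operations sequentially: folding the weight $w$ and the kernel $T$ into a single invariant transport $h[G,u,v]=w[G,u]T_G(u,v)\identity{A}[G,v]$ and applying MTP once, on the genuinely unimodular $\mu_1$, yields $\mu_2$ as a bias of $\mu_1$ by $f[G,o]=\sum_v w[G,v]T_G(v,o)$. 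This specializes to $T^-_G(o)$ when $w\equiv 1$, recovering part (i), which is a nice sanity check. Your normalization check ($\omid{f}=\omid{w T^+}=\omid{w}$) and the bookkeeping note that $T^+=1$ need only hold on $\{w>0\}$, i.e.\ that $w(1-T^+)=0$ a.s., are both exactly the points one has to make precise. In short, your argument for (ii) is correct and fills in a step the paper leaves compressed (or possibly glosses over); the price is one extra explicit MTP computation, which is well worth it.
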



\begin{proof}

	\eqref{lemma:unimodularBias:1}. 
	Given an event $A\subseteq\mathcal G_*$, define 
	$
	g[G,o,v]:=T_G(o,v)\identity{A}[G,v].
	$
	By~\eqref{eq:rootChange} and unimodularity of $[\bs G_1, \bs o_1]$, one gets
	\begin{eqnarray*}
		\myprob{[\bs G_2, \bs o_2]\in A} &=& \omid{g_{\bs G_1}^+(\bs o_1)} = \omid{g_{\bs G_1}^-(\bs o_1)}\\
		&=& \omid{\identity{A}[\bs G_1, \bs o_1] \sum_{v\in V(\bs G_1)}T_{\bs G_1}(v,\bs o_1)}\\
		&=& \omid{\identity{A}[\bs G_1, \bs o_1] T_{\bs G_1}^-(\bs o_1)}.				
	\end{eqnarray*}
	By letting $A:=\mathcal G_*$, one gets $\omid{T_{\bs G_1}^-(\bs o_1)}=1$. Therefore, the above equation means that the distribution of $[\bs G_2,\bs o_2]$ is obtained from that of $[\bs G_1,\bs o_1]$ by the desired biasing (see Definition~\ref{def:bias}). 
	
	\eqref{lemma:unimodularBias:2}. By part \eqref{lemma:unimodularBias:1} and Definition~\ref{def:weakStrong}, 
	$[\bs G_2,\bs o_2]$ is obtained by biasing $[\bs G_1, \bs o_1]$ by a composition of two biasings, say by functions $w_1$ and $w_2$. It is easy to show that the result is just biasing by $w_1w_2$ and the claim is proved.
\end{proof}

\begin{lemma}
	\label{lemma:unimodularBiasUni}
	Let $[\bs G, \bs o]$ be a unimodular network and $w:\mathcal G_*\rightarrow\mathbb R^{\geq 0}$ be a measurable function. Then, biasing $[\bs G, \bs o]$ by $w$ gives a unimodular probability measure if and only if $w$ doesn't depend on the root a.s. 
\end{lemma}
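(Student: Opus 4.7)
The plan is to prove both directions via the mass transport principle applied to carefully chosen test functions, then use Lemma~\ref{lemma:happensAtRoot} together with connectedness to propagate the converse from the root to all vertices.

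The setup I would use is as follows. Let $c := \omid{w[\bs G, \bs o]} \in (0,\infty)$ and let $[\bs G', \bs o']$ denote the biased network. For any measurable $g:\mathcal G_{**}\rightarrow\mathbb R^{\geq 0}$, unimodularity of $[\bs G', \bs o']$ is equivalent, after multiplying by $c$, to
\[
\omid{w[\bs G, \bs o]\,g^+_{\bs G}(\bs o)} = \omid{w[\bs G, \bs o]\,g^-_{\bs G}(\bs o)}.
\]
Separately, applying the mass transport principle for $[\bs G, \bs o]$ to $h[G,o,v]:=w[G,v]g[G,o,v]$, and noting that $h^+_G(o) = \sum_v w[G,v]g[G,o,v]$ while $h^-_G(o) = w[G,o]g^-_G(o)$, gives
\[
\omid{\sum_{v\in V(\bs G)} w[\bs G,v]\,g[\bs G,\bs o,v]} = \omid{w[\bs G,\bs o]\,g^-_{\bs G}(\bs o)}.
\]
These two identities drive both directions.

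For the \emph{if} direction, assume $w$ does not depend on the root a.s., so $w[\bs G, v] = w[\bs G, \bs o]$ for every $v \in V(\bs G)$, a.s. The left-hand side of the second identity then equals $\omid{w[\bs G, \bs o]\,g^+_{\bs G}(\bs o)}$, and combining with the first identity is exactly the mass transport principle for $[\bs G', \bs o']$.

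For the \emph{only if} direction, assume $[\bs G', \bs o']$ is unimodular, so both identities hold; subtracting yields
\[
\omid{\sum_{v\in V(\bs G)} (w[\bs G,\bs o] - w[\bs G,v])\,g[\bs G,\bs o,v]} = 0
\]
for every $g$ making both sides finite. I would choose $g[G,o,v] := \indic{v\sim o}\indic{w[G,o]>w[G,v]}/d(o)$, under which $g^+_G(o)\leq 1$ and both $w[G,o]g^+_G(o)$ and $\sum_v w[G,v]g[G,o,v]$ are pointwise bounded by $w[G,o]$, so every expectation is at most $c$. The integrand is a sum of nonnegative terms whose expectation vanishes, forcing a.s.\ that no neighbor $v$ of $\bs o$ satisfies $w[\bs G, v] < w[\bs G, \bs o]$. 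A second application of the mass transport principle for $[\bs G, \bs o]$ itself, to $g'[G,o,v] := \indic{v\sim o}\indic{w[G,v]<w[G,o]}$, equates the expected numbers of strictly lower and strictly higher neighbors of $\bs o$; since the former vanishes, so does the latter. Hence a.s.\ $w[\bs G, v] = w[\bs G, \bs o]$ for every $v \sim \bs o$.

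To propagate this from the root to the whole network, I would apply Lemma~\ref{lemma:happensAtRoot} to the covariant subset $S_G := \{o\in V(G): \exists v\sim o,\ w[G,o]\neq w[G,v]\}$. The previous step says $\bs o\notin S_{\bs G}$ a.s., so $S_{\bs G} = \emptyset$ a.s., and connectedness of $\bs G$ then forces $w[\bs G, \cdot]$ to be constant on $V(\bs G)$ a.s. The only real subtlety is ensuring that both identities have finite expectations so that the subtraction is legitimate; this is exactly what the $1/d(o)$ normalization in the choice of $g$ achieves, and the rest of the argument is structural.
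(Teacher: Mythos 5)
Your proof is correct and follows the same overall strategy as the paper's: apply the mass transport principle to a $w$-weighted version of an arbitrary test kernel, combine with unimodularity of the biased measure to get the identity $\omid{\sum_v (w[\bs G,\bs o]-w[\bs G,v])g[\bs G,\bs o,v]}=0$, and then choose $g$ to read off that $w$ is root-independent. Where you diverge is in the handling of the converse: the paper substitutes for $g$ the positive and negative parts of $w[G,v]-w[G,o]$ directly, which is slick but glosses over the fact that the identity was obtained by subtracting two expectations that are not a priori finite. You explicitly localize the test kernel to nearest neighbors and normalize by $1/d(o)$ so that all four expectations involved are bounded by $c=\omid{w[\bs G,\bs o]}$, making the subtraction legitimate, and you then pay for this localization with two further steps the paper does not need: a second mass-transport argument to rule out strictly higher neighbors as well as strictly lower ones, and a propagation step via Lemma~\ref{lemma:happensAtRoot} plus connectedness to go from ``constant on the neighborhood of the root'' to ``constant on the whole network.'' Both of these extra steps are correct and standard. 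In short, you reproduce the paper's argument but with a concretely verified finiteness justification that the paper leaves implicit; this is a small but genuine improvement in rigor, at the cost of two additional (easy) lemmas at the end.
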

\begin{proof}
	Let $[\bs G', \bs o']$ be a random rooted network whose distribution is obtained by biasing that of $[\bs G, \bs o]$ by $w$. Let $m:=\omid{w[\bs G, \bs o]}$. For a measurable function $g:\mathcal G_{**}\rightarrow\mathbb R^{\geq 0}$, one has
	\begin{eqnarray*}
		\omid{g_{\bs G'}^+(\bs o')} = \frac 1m \omid{w[\bs G, \bs o] g_{\bs G}^+(\bs o)} = \frac 1m \omid{h_{\bs G}^+(\bs o)},
	\end{eqnarray*}
	where $h_G(o,v):=w[G,o]g_G(o,v)$. By unimodularity of $[\bs G, \bs o]$, one obtains
	\[
	\omid{g_{\bs G'}^+(\bs o')} = \frac 1m \omid{h_{\bs G}^-(\bs o)} = \frac 1m \omid{\sum_{v\in V(\bs G)}w[\bs G, v]g_{\bs G}(v,\bs o)}.
	\]
	On the other hand,
	\[
	\omid{g_{\bs G'}^-(\bs o')} = \frac 1m \omid{w[\bs G, \bs o]g_{\bs G}^-(\bs o)} = \frac 1m \omid{\sum_{v\in V(\bs G)}w[\bs G, \bs o]g_{\bs G}(v,\bs o)}.
	\]
	Therefore, $[\bs G',\bs o']$ is unimodular if and only if 
	\begin{equation}
	\label{eq:lemma:unimodularBiasUni}
	\forall g: \omid{\sum_{v\in V(\bs G)}\left(w[\bs G,v]-w[\bs G,\bs o]\right)g_{\bs G}(v,\bs o)}=0.
	\end{equation}
	
	First, suppose that almost surely, $w[\bs G, v]=w[\bs G, \bs o]$ for all $v\in V(\bs G)$. This implies that~\eqref{eq:lemma:unimodularBiasUni} holds and thus, $[\bs G',\bs o']$ is unimodular.
	Conversely, assume $[\bs G',\bs o']$ is unimodular. By substitute $g_G(v,o)$ with the positive and negative parts of $w[G,v]-w[G,o]$ respectively, \eqref{eq:lemma:unimodularBiasUni} gives that almost surely, $w[\bs G,v]=w[\bs G,\bs o]$ for all $v\in V(\bs G)$. 
	So, the claim is proved.
\end{proof}

\begin{lemma}
	\label{lemma:unimodularNonErgodic}
	Let $[\bs G_1, \bs o_1]$ and $[\bs G_2, \bs o_2]$ be random rooted networks that are weakly unrooted-equivalent. 
	If both are unimodular, then the distribution of $[\bs G_2, \bs o_2]$ can be obtained by biasing that of $[\bs G_1, \bs o_1]$ by a function that doesn't depend on the root and is almost surely positive.
\end{lemma}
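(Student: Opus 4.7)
The plan is to combine Lemma~\ref{lemma:unimodularBias}(ii) with Lemma~\ref{lemma:unimodularBiasUni}, using mutual absolute continuity to upgrade nonnegativity of the bias function to almost-sure positivity. Write $\mu_1$ and $\mu_2$ for the distributions of $[\bs G_1, \bs o_1]$ and $[\bs G_2, \bs o_2]$ respectively.

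First, since $[\bs G_1, \bs o_1]$ is unimodular and $[\bs G_2, \bs o_2]$ is weakly unrooted-equivalent to it, Lemma~\ref{lemma:unimodularBias}(ii) directly gives a measurable $w_0 \geq 0$ such that $\mu_2$ is obtained from $\mu_1$ by biasing by $w_0$; in particular $\mu_2 \ll \mu_1$. Next I would observe that Condition~\ref{conditionB} is manifestly symmetric in the two random rooted networks, so $[\bs G_1, \bs o_1]$ is also weakly unrooted-equivalent to $[\bs G_2, \bs o_2]$. Since $[\bs G_2, \bs o_2]$ is unimodular, applying Lemma~\ref{lemma:unimodularBias}(ii) with the roles of the two networks swapped yields $\mu_1 \ll \mu_2$. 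Hence $\mu_1$ and $\mu_2$ are mutually absolutely continuous.

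I would then take $w := d\mu_2/d\mu_1$, the Radon–Nikodym derivative. This $w$ differs from the $w_0$ above only by a positive normalizing constant (namely $1/\int w_0\,d\mu_1$), and biasing is insensitive to positive rescaling, so biasing $\mu_1$ by $w$ still produces $\mu_2$. The set $\{w = 0\}$ has $\mu_2$-measure zero by definition of $w$, and therefore $\mu_1$-measure zero by the absolute continuity $\mu_1 \ll \mu_2$; so $w > 0$ holds $\mu_1$-almost surely. Finally, biasing the unimodular measure $\mu_1$ by this $w$ produces the unimodular measure $\mu_2$, so Lemma~\ref{lemma:unimodularBiasUni} forces $w$ to not depend on the root a.s., completing the proof.

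The only nontrivial step is ensuring positivity: Lemma~\ref{lemma:unimodularBias}(ii) only asserts that \emph{some} nonnegative biasing function exists, but it could in principle vanish on a set of positive $\mu_1$-measure. The symmetric second application of that lemma is what rules this out, by forcing $\mu_1$ and $\mu_2$ to have exactly the same null sets so that the Radon–Nikodym derivative is an a.s.\ positive version of the bias function.
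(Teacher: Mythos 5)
Your proof is correct and follows essentially the same route as the paper's: apply Lemma~\ref{lemma:unimodularBias}(ii) in both directions to obtain mutual absolute continuity, identify the bias function with the Radon--Nikodym derivative to conclude almost-sure positivity, and invoke Lemma~\ref{lemma:unimodularBiasUni} for root-independence. The only cosmetic difference is that you invoke Lemma~\ref{lemma:unimodularBiasUni} at the end rather than alongside Lemma~\ref{lemma:unimodularBias} at the outset.
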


\begin{proof}
	Since both are unimodular, by lemmas~\ref{lemma:unimodularBias} and~\ref{lemma:unimodularBiasUni}, $[\bs G_2, \bs o_2]$ is obtained from $[\bs G_1, \bs o_1]$ by biasing by a function $w:\mathcal G_*\rightarrow\mathbb R^{\geq 0}$ that doesn't depend on the root. As a result, the distribution of $[\bs G_2,\bs o_2]$ is absolutely continuous w.r.t. that of $[\bs G_1, \bs o_1]$. 
	The same holds by swapping the roles of $[\bs G_1, \bs o_1]$ and $[\bs G_2, \bs o_2]$. Therefore, the Radon-Nikodym derivative, which is proportional to $w[\bs G_1,\bs o_1]$, is positive almost surely. This proves the claim.
\end{proof}

\begin{proposition}
	\label{prop:unimodularErgodic}
	Let $[\bs G_1, \bs o_1]$ and $[\bs G_2, \bs o_2]$ be random rooted networks which are weakly unrooted-equivalent. 
		If at least one of them is an extremal unimodular network,
		then they are also strongly unrooted-equivalent.
\end{proposition}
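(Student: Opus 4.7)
The plan is to verify Condition~\ref{conditionF} of Definition~\ref{def:same}; once that is established, Theorem~\ref{thm:implications} immediately yields strong unrooted-equivalence. Since Condition~\ref{conditionB} is symmetric in the two random rooted networks, I may assume without loss of generality that $[\bs G_1, \bs o_1]$ is the extremal unimodular one. By weak unrooted-equivalence, there exists a measurable $w:\mathcal G_*\rightarrow\mathbb R^{\geq 0}$ such that the distribution of $[\bs G_2,\bs o_2]$ is obtained from that of $[\bs G_1, \bs o_1]$ first by biasing by $w$ and then by applying a root-change.

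A root-change selects a new root in the same underlying network $\bs G$ (see Definition~\ref{def:rootChange}), so it does not alter the induced distribution on $(\mathcal G, J)$. Consequently, the difference between the distributions of $[\bs G_1]$ and $[\bs G_2]$ on the level of non-rooted networks is accounted for entirely by the biasing step. Applying Lemma~\ref{lemma:biasG}, this means the distribution of $[\bs G_2]$ is obtained from that of $[\bs G_1]$ by biasing by the $I$-measurable function $\omidCond{w[\bs G_1, \bs o_1]}{I}$.

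At this point I would invoke extremality: since $[\bs G_1, \bs o_1]$ is extremal, every invariant event has probability $0$ or $1$, so every $I$-measurable function is essentially constant. In particular, $\omidCond{w[\bs G_1, \bs o_1]}{I}$ equals the constant $\omid{w[\bs G_1, \bs o_1]}$ almost surely, and biasing by an essentially constant function does not change the probability measure. Therefore $[\bs G_1]$ and $[\bs G_2]$ have the same distribution on $(\mathcal G, J)$, which is exactly Condition~\ref{conditionF}; Theorem~\ref{thm:implications} then delivers strong unrooted-equivalence. There is no substantial obstacle here: the only care required is that the biasing function from Condition~\ref{conditionB} has finite positive integral so that Lemma~\ref{lemma:biasG} applies, which is guaranteed by Definition~\ref{def:bias}.
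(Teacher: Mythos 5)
Your proof is correct and arrives at the same conclusion by the same overall plan (apply Lemma~\ref{lemma:biasG} to push the bias down to the non-rooted level, then use extremality to deduce the bias function is essentially constant). The one notable difference: the paper first invokes Lemma~\ref{lemma:unimodularBias}(ii) — which rests on the mass transport principle and hence genuinely uses unimodularity of $[\bs G_1, \bs o_1]$ — to collapse the bias-then-root-change into a single bias, and only then applies Lemma~\ref{lemma:biasG}. You instead keep the root-change but observe directly from Definition~\ref{def:rootChange} that a root-change leaves the distribution on $(\mathcal G, J)$ untouched (this is just the easy ``$\Rightarrow$'' direction of Theorem~\ref{thm:shiftCoupling}). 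Your version is thus slightly more elementary: unimodularity enters only through the definition of ``extremal,'' while the paper's proof needs it also to run Lemma~\ref{lemma:unimodularBias}. The WLOG step is justified, since Condition~\ref{conditionB} and the conclusion are both symmetric in the two networks. Nothing is missing; both routes are short and sound.
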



%
%


\begin{proof}
	Assume $[\bs G_1,\bs o_1]$ is an extremal unimodular network.
	Lemma~\ref{lemma:unimodularBias} implies that $[\bs G_2,\bs o_2]$ can be obtained from $[\bs G_1,\bs o_1]$ by biasing by a measurable function $w:\mathcal G_*\rightarrow\mathbb R^{\geq 0}$. Lemma~\ref{lemma:biasG} implies that the distribution of $[\bs G_2]$ is obtained from that of $[\bs G_1]$ by biasing by $\omidCond{w[\bs G_1,\bs o_1]}{I}$. On the other hand, since $[\bs G_1, \bs o_1]$ is extremal, the $I$-measurable function $\omidCond{w[\bs G_1,\bs o_1]}{I}$ is {essentially constant}. It follows that $[\bs G_2]$ and $[\bs G_1]$ have the same distribution, which shows that $[\bs G_2,\bs o_2]$ is strongly unrooted-equivalent to $[\bs G_1, \bs o_1]$. 
\end{proof}

We are now ready to prove the main theorems of this section.

\begin{proof}[Proof of Theorem~\ref{thm:unimodularUnique}]

	Let $[\bs G_1, \bs o_1]$ and $[\bs G_2, \bs o_2]$  be unimodular networks such that their distributions agree on the invariant sigma-field. Therefore, they are strongly unrooted-equivalent 
    (Condition~\ref{conditionF}). The same holds weakly by Theorem~\ref{thm:implications}. Thus, Lemma~\ref{lemma:unimodularNonErgodic} implies that $[\bs G_2, \bs o_2]$ is obtained by biasing the distribution of $[\bs G_1,\bs o_1]$ by a measurable function $w:\mathcal G_*\rightarrow\mathbb R^{\geq 0}$ that doesn't depend on the root. It is enough to show that $w[\bs G_1, \bs o_1]$ is essentially constant. 
	
	By Lemma~\ref{lemma:biasG}, the distribution of $[\bs G_2]$ is obtained from that of $[\bs G_1]$  by biasing  by $\omidCond{w[\bs G_1, \bs o_1]}{I}$. Since the latter distributions are equal by assumption, it follows that $\omidCond{w[\bs G_1, \bs o_1]}{I}$ is essentially constant. On the other hand, since $w$ doesn't depend on the root, it is $I$-measurable and thus, $\omidCond{w[\bs G_1, \bs o_1]}{I}=w[\bs G_1, \bs o_1]$ a.s. It follows that the bias function $w[\bs G_1, \bs o_1]$ is essentially constant. Therefore, $[\bs G_2, \bs o_2]$ and $[\bs G_1, \bs o_1]$ are identically distributed.
\end{proof}

\begin{proof}[Proof of Theorem~\ref{thm:balancing}]
	\eqref{thm:balancing:1}$\Rightarrow$ \eqref{thm:balancing:2}. Let $A\in I$ be an invariant event. Define 
	$
	g[G,v,z]:=T_G(v,z)\identity{A}[G,v] = T_G(v,z)\identity{A}[G,z].
	$
	By the assumption, one gets that almost surely, $g_{\bs G}^+(\bs o)=w_1[\bs G,\bs o] \identity{A}[\bs G,\bs o]$ and $g_{\bs G}^-(\bs o)=w_2[\bs G,\bs o] \identity{A}[\bs G,\bs o]$. By unimodularity, one gets
	\[
	\omid{w_1[\bs G, \bs o]\identity{A}[\bs G, \bs o]}= \omid{g_{\bs G}^+(\bs o)} = \omid{g_{\bs G}^-(\bs o)} = \omid{w_2[\bs G, \bs o]\identity{A}[\bs G, \bs o]}.
	\]
	By considering this for all $A\in I$, one obtains~\eqref{eq:thm:balancing0}.
	
	\eqref{thm:balancing:2} $\Rightarrow$ \eqref{thm:balancing:1}. For $i=1,2$, let $[\bs G_i, \bs o_i]$ be a random rooted network obtained by biasing $[\bs G, \bs o]$ by $w_i$. Assumption~\eqref{eq:thm:balancing0} and Lemma~\ref{lemma:biasG} imply that $[\bs G_1]$ has the same distribution as $[\bs G_2]$. 
	In other words, $[\bs G_1, \bs o_1]$ and $[\bs G_2, \bs o_2]$ are strongly unrooted-equivalent 
	(Condition~\ref{conditionF}). By Theorem~\ref{thm:shiftCoupling},  $[\bs G_2, \bs o_2]$ can be obtained from $[\bs G_1, \bs o_1]$ by a root-change; i.e. there is a measurable function $t:\mathcal G_{**}\rightarrow\mathbb R^{\geq 0}$ such that $t_{\bs G_1}^+(\bs o_1)=1$ a.s. and
	\[
	\omid{h[\bs G_2, \bs o_2]} = \omid{\sum_{v\in V({\bs G_1})} t_{\bs G_1}(\bs o_1, v) h[\bs G_1, v]}
	\] 
	for any measurable function $h:\mathcal G_*\rightarrow\mathbb R^{\geq 0}$. Fix $h$ arbitrarily. By the definition of $[\bs G_i, \bs o_i]$, one obtains
	\begin{equation}
	\label{eq:thm:balancing1}
	\omid{w_2[\bs G, \bs o] h[\bs G, \bs o]} = \omid{\sum_{v\in V({\bs G})} t_{\bs G}(\bs o, v) w_1[\bs G, \bs o] h[\bs G, v]},	
	\end{equation}
	
	where the equation $\omid{w_1[\bs G, \bs o]}= \omid{w_2[\bs G, \bs o]}$ is used (which holds by~\eqref{eq:thm:balancing0}) to cancel out the denominators. Define an invariant transport kernel $T$ by
	$
	T_G(o,v):=t_G(o,v)w_1(G,o).
	$
	By unimodularity, one has
	\begin{eqnarray*}
		\omid{\sum_{v\in V({\bs G})} t_{\bs G}(\bs o, v) w_1[\bs G, \bs o] h[\bs G, v]} &=& \omid{\sum_{v\in V({\bs G})} T_{\bs G}(\bs o, v) h[\bs G, v]}\\
		&=& \omid{\sum_{v\in V({\bs G})} T_{\bs G}(v, \bs o)  h[\bs G, \bs o]}\\
		&=& \omid{T_{\bs G}^-(\bs o)h[\bs G, \bs o]}.
	\end{eqnarray*}
	So, \eqref{eq:thm:balancing1} implies that $\omid{w_2[\bs G, \bs o]h[\bs G, \bs o]} = \omid{T_{\bs G}^-(\bs o)h[\bs G, \bs o]}$. Since this holds for any $h$, it follows that $T_{\bs G}^-(\bs o)=w_2[\bs G, \bs o]$ a.s. 
	On the other hand, by $t_{\bs G_1}^+(\bs o_1)=1$ a.s., one gets that $T_{\bs G}^+(\bs o)=w_1[\bs G, \bs o]$ a.s. Therefore, Lemma~\ref{lemma:happensAtRoot} implies that the same holds for all vertices; i.e. almost surely, for all $v\in V(\bs G)$, one has $T_{\bs G}^+(v)=w_1[\bs G, v]$ and $T_{\bs G}^-(v)=w_2[\bs G, v]$. So the theorem is proved.
\end{proof}

\section{Network Extension and Unimodularization}
\label{sec:extAndTrans}

In this section, the method of \textit{network extension} is introduced and {the shift-coupling theorem is} applied to it. This method unifies some of the examples in the literature to construct unimodular networks. First, in Subsection~\ref{subsec:unimodularization} we study  \textit{unimodularizations} of a random non-rooted network in general. Then, network extension is studied in Subsection~\ref{subsec:extension}.



\subsection{Unimodularizations of a Non-Rooted Network}
\label{subsec:unimodularization}



\begin{definition}
	\label{def:unimodularization}
	Let $\mu_0$ be a probability measure on $(\mathcal G,J)$ (or similarly, on $(\mathcal G_*,I)$).  We say that a random rooted network $[\bs G',\bs o']$ is  \defstyle{unrooted-equivalent to $\mu_0$}, 
	\begin{itemize}
		\item \defstyle{strongly} 
		if the distribution of $[\bs G']$ is identical to $\mu_0$.
		\item \defstyle{weakly} 
		if the distribution of $[\bs G']$ and $\mu_0$ are mutually absolutely continuous. 
	\end{itemize}
	If in addition $[\bs G', \bs o']$ is unimodular, we say 
	it is a \defstyle{(weak or strong) unimodularization of $\mu_0$}  and $\mu_0$ \defstyle{can be unimodularized}.
	
	
\end{definition}



Heuristically, unimodularization means to choose a random root for a given random non-rooted network 
to obtain a unimodular network.

To see why the weak sense is ever defined here, it will turn out that some well known examples in the literature are weak unimodularizations 
(see examples~\ref{ex:replacement} and~\ref{ex:dual2} of Subsection~\ref{subsec:extension}).
Moreover, the notions of weak and strong here are analogous to the previous notions as described in the following lemma.

\begin{lemma}
	In the case $\mu_0$ is the distribution of $[\bs G]$, where $[\bs G, \bs o]$ is a random rooted network, Definition~\ref{def:unimodularization} is reduced to Definition~\ref{def:weakStrong}.
\end{lemma}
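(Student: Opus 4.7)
The plan is to reduce, in both the strong and weak senses, Definition~\ref{def:unimodularization} for $[\bs G', \bs o']$ against $\mu_0$ (the distribution of $[\bs G]$) to Definition~\ref{def:weakStrong} applied to the pair $([\bs G, \bs o], [\bs G', \bs o'])$.

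The strong case will be immediate: $[\bs G', \bs o']$ being strongly unrooted-equivalent to $\mu_0$ means the distribution of $[\bs G']$ equals that of $[\bs G]$, which is Condition~\ref{conditionF} of Definition~\ref{def:same}; by Theorem~\ref{thm:implications} this is equivalent to strong unrooted-equivalence of the pair in the sense of Definition~\ref{def:weakStrong}.

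For the weak case, the forward direction (Condition~\ref{conditionB} implies mutual absolute continuity) will use two observations. First, by Lemma~\ref{lemma:biasG}, biasing a random rooted network by a function $w$ transforms the non-rooted distribution into the one biased by $\omidCond{w}{I}$, which is absolutely continuous with respect to the original. Second, a root-change leaves the non-rooted distribution unchanged: this is immediate from Definition~\ref{def:rootChange} since the underlying network $\bs G$ is preserved. Composing these, if $[\bs G', \bs o']$ is obtained from $[\bs G, \bs o]$ by a biasing followed by a root-change then the distribution of $[\bs G']$ is absolutely continuous with respect to $\mu_0$; the symmetric half of Condition~\ref{conditionB} supplies the reverse absolute continuity.

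The backward direction is the substantive step, and I would appeal to the shift-coupling theorem. Assuming mutual absolute continuity, let $f$ denote the Radon-Nikodym derivative of the distribution of $[\bs G']$ with respect to $\mu_0$; since both measures live on $(\mathcal G, J)$, the function $f$ is $J$-measurable on $\mathcal G$, so its pullback $f \circ \pi$ is $I$-measurable on $\mathcal G_*$. I would then bias $[\bs G, \bs o]$ by $f \circ \pi$ to produce an intermediate random rooted network. Lemma~\ref{lemma:biasG} combined with the $I$-measurability of $f \circ \pi$ yields $\omidCond{f \circ \pi}{I} = f \circ \pi$, so the non-rooted distribution of the intermediate equals $\mu_0$ biased by $f$, which is precisely the distribution of $[\bs G']$. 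Hence the intermediate and $[\bs G', \bs o']$ agree on the invariant sigma-field, and Theorem~\ref{thm:shiftCoupling} supplies a root-change sending the intermediate to $[\bs G', \bs o']$, giving one half of Condition~\ref{conditionB}. Applying the same argument with the reciprocal Radon-Nikodym derivative in place of $f$ will produce the other half. The only real obstacle is the appeal to Theorem~\ref{thm:shiftCoupling}; the rest is normalization and a direct use of Lemma~\ref{lemma:biasG}.
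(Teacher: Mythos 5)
Your proof is correct and complete; the paper declares this lemma ``straightforward'' and skips its proof, so there is no argument in the paper to compare against. Your handling of all three pieces is sound: the strong case reduces to Condition~\ref{conditionF} and Theorem~\ref{thm:implications}; the weak forward direction follows from Lemma~\ref{lemma:biasG} plus the observation that a root-change leaves the non-rooted law unchanged (and the symmetry built into Condition~\ref{conditionB}); and the weak backward direction correctly biases $[\bs G,\bs o]$ by the $I$-measurable pullback of the Radon--Nikodym derivative (which has integral $1$, so the biasing is legitimate, and likewise for the reciprocal derivative since mutual absolute continuity makes $f>0$ a.s.) to match non-rooted laws before invoking Theorem~\ref{thm:shiftCoupling} — the only non-elementary ingredient, as you flag.
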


This lemma is straightforward and we skip its proof.

\begin{proposition}
	\label{prop:unimodulalrizationUnique}
	Under the assumptions of Definition~\ref{def:unimodularization},
	if $\mu_0$ can be unimodularized (either weakly or strongly), 
	then there is a unique strong unimodularization of $\mu_0$. 
\end{proposition}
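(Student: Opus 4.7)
The plan is to split the proposition into two independent claims: (a) uniqueness of a strong unimodularization whenever one exists, and (b) existence of a strong unimodularization given only a weak one. Part (a) is a direct consequence of Theorem~\ref{thm:unimodularUnique}, while part (b) will be obtained by a Radon--Nikodym biasing, leveraging Lemma~\ref{lemma:unimodularBiasUni} to preserve unimodularity.

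For uniqueness, suppose $[\bs G_1, \bs o_1]$ and $[\bs G_2, \bs o_2]$ are both strong unimodularizations of $\mu_0$. By definition, the distributions of $[\bs G_1]$ and $[\bs G_2]$ on $(\mathcal G, J)$ both equal $\mu_0$. Via the bijection between probability measures on $(\mathcal G, J)$ and probability measures on $(\mathcal G_*, I)$ (discussed after Proposition~\ref{prop:nonstandard}), this means the distributions of $[\bs G_1, \bs o_1]$ and $[\bs G_2, \bs o_2]$ agree on the invariant sigma-field $I$. Since both are unimodular, Theorem~\ref{thm:unimodularUnique} forces them to be identically distributed.

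For existence from a weak unimodularization $[\bs G', \bs o']$, the distribution of $[\bs G']$ is mutually absolutely continuous with $\mu_0$, so a Radon--Nikodym derivative $f: \mathcal G \to \mathbb R^{\geq 0}$ of $\mu_0$ with respect to the distribution of $[\bs G']$ exists and satisfies $\omid{f[\bs G']} = 1$. Pull $f$ back along $\pi$ to obtain an $I$-measurable function on $\mathcal G_*$, which by construction does not depend on the root. Define $[\bs G'', \bs o'']$ by biasing the distribution of $[\bs G', \bs o']$ by $f$. By Lemma~\ref{lemma:biasG}, the distribution of $[\bs G'']$ is then the biasing of that of $[\bs G']$ by $\omidCond{f[\bs G', \bs o']}{I} = f[\bs G']$, which is exactly $\mu_0$. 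Moreover, since $f$ does not depend on the root, Lemma~\ref{lemma:unimodularBiasUni} ensures that $[\bs G'', \bs o'']$ remains unimodular, giving the desired strong unimodularization.

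The main subtlety, rather than the main obstacle, lies in identifying $f$ correctly as a function on $\mathcal G_*$ that is $I$-measurable; this is where the bijection between measures on $(\mathcal G, J)$ and on $(\mathcal G_*, I)$, and the fact that $I$-measurable functions are precisely those not depending on the root, both play a crucial role so that Lemma~\ref{lemma:unimodularBiasUni} can be applied. Once this identification is made, both existence and uniqueness fall out of the machinery already developed in Section~\ref{sec:unimodularCase}.
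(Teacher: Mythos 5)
Your proof is correct and follows essentially the same route as the paper: bias the weak unimodularization by the Radon--Nikodym derivative $w' \circ \pi$, invoke Lemma~\ref{lemma:unimodularBiasUni} to preserve unimodularity and Lemma~\ref{lemma:biasG} to identify the resulting non-rooted distribution with $\mu_0$, then cite Theorem~\ref{thm:unimodularUnique} for uniqueness. The only difference is that you spell out the uniqueness step (agreement on $I$ plus unimodularity implies equal distribution) rather than invoking the theorem in a single line; the substance is identical.
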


\begin{proof}
	Suppose $[\bs G', \bs o']$ is a weak unimodularization of $\mu_0$. 
	Let $w':\mathcal G\rightarrow\mathbb R^{\geq 0}$ be the Radon-Nikodym derivative of $\mu_0$ w.r.t. the distribution of $[\bs G']$. Let $[\bs G, \bs o]$ be the random rooted network obtained by biasing $[\bs G', \bs o']$ by $w:=w'\circ \pi$. 
	Lemma~\ref{lemma:unimodularBiasUni} implies that $[\bs G, \bs o]$ is unimodular. Lemma~\ref{lemma:biasG} implies that the distribution of $[\bs G]$ is equal to $\mu_0$, which means that $[\bs G, \bs o]$ is a strong unimodularization of $\mu_0$. 
	Now, Theorem~\ref{thm:unimodularUnique} implies that this is the unique strong unimodularization of $\mu_0$. 
\end{proof}

Note that some probability measures on $(\mathcal G,J)$ (i.e. some random non-rooted networks) cannot be unimodularized; e.g. a deterministic semi-infinite path. 

\begin{example}[Planar Dual I]
	\label{ex:dual1}
	Let $[\bs G, \bs o]$ be a unimodular plane graph
	(see Example~9.6 of~\cite{processes} for how to regard a plane graph as a network and define its dual).
	%
	With no need to select a vertex of the dual graph as a root,
	$[\bs G^*]$ makes sense as a random non-rooted network. In~\cite{processes}, a unimodular network is constructed based on the dual graph, which in our language, is a weak unimodularization of $[\bs G^*]$. 
	This construction will be discussed in Example~\ref{ex:dual2}.
	%
	%
\end{example}

\begin{example} [Subnetwork]
	\label{ex:subnetwork}
	Let $[\bs G, \bs o]$ be a unimodular network and $S$ be a covariant subnetwork (Definition~\ref{def:covariantSubset}). 
	Assume $S_{\bs G}$ is nonempty and connected a.s. Therefore, $[S_{\bs G}]$ is a random non-rooted network; i.e. a random element in $(\mathcal G,J)$ (note that $S_{\bs G}$ doesn't need to contain $\bs o$ and no root is chosen for $S_{\bs G}$). 
	Condition $[S_{\bs G}, \bs o]$ on the event $\bs o\in S_{\bs G}$; i.e. consider the probability measure
	$
	\mathcal P'[A]:= \probCond{[S_{\bs G}, \bs o]\in A}{\bs o\in S_{\bs G}}
	$
	on $\mathcal G_*$. Considering $\mathcal P'$ as a random rooted network, we claim that it is a weak unimodularization of (the distribution of) $[S_{\bs G}]$. 
	Let $\mu:=\pi_*\mathcal P'$ be the distribution of the non-rooted network under $\mathcal P'$.
	By taking conditional expectation w.r.t. $I$, one obtains for any measurable function $g:(\mathcal G,J)\rightarrow\mathbb R^{\geq 0}$ that
	$
	\omid{g[S_{\bs G}]\identity{\{\bs o\in S_{\bs G}\}}} = \omid{g[S_{\bs G}] \probCond{\bs o\in S_{\bs G}}{I}}.
	$
	Therefore,
	\[
	\omidCond{g[S_{\bs G}]}{\bs o\in S_{\bs G}} = \omid{g[S_{\bs G}] \probCond{\bs o\in S_{\bs G}}{I}/\myprob{\bs o\in S_{\bs G}}}.
	\]
	In other words, $\mu$ is just biasing the distribution of $[S_{\bs G}]$ by $\probCond{\bs o\in S_{\bs G}}{I}/\myprob{\bs o\in S_{\bs G}}$, where the latter is considered as a function on $(\mathcal G, J)$. Similar to Lemma~\ref{lemma:happensAtRoot}, one can deduce from $S_{\bs G}\neq \emptyset$ a.s. that $\probCond{\bs o\in S_{\bs G}}{I}>0$ a.s., thus, $\mu$ and the distribution of $[S_{\bs G}]$ are mutually absolutely continuous. On the other hand, it is easy to use~\eqref{eq:unimodular} directly to see that $\mathcal P'$ is unimodular. Thus, $\mathcal P'$ is a weak unimodularization of (the distribution of) $[S_{\bs G}]$. 
	
	To obtain a strong unimodularization of $[S_{\bs G}]$, 
	one can bias the distribution of $[\bs G, \bs o]$ by $\identity{\{\bs o\in S_{\bs G}\}}/\probCond{\bs o\in S_{\bs G}}{I}$ and then consider the subnetwork rooted at $\bs o$ (see Lemma~\ref{lemma:biasG}). Here, the denominator $\probCond{\bs o\in S_{\bs G}}{I}$ can be regarded as the \textit{sample intensity} of $S_{\bs G}$, which is a random variable and a function of $[\bs G]$.
	
%
\end{example}

\subsection{Unimodularizations of a Network Extension}
\label{subsec:extension}

In some examples in the literature, given a unimodular network $[\bs G_0, \bs o_0]$, another (not necessarily unimodular) random rooted network $[\bs G, \bs o]$ is obtained by adding some vertices and edges to the original network, called an \textit{extension} here (Definition~\ref{def:extension}). Then, by biasing the probability measure and changing the root, another unimodular network is constructed. In this subsection, first a general method is presented that covers such examples 
and helps to construct new unimodular networks. Then, using the previous theorems, it is shown that this method gives all \textit{unimodularizations} of $[\bs G]$ in the sense given in Subsection~\ref{subsec:unimodularization}. 
A number of basic examples are provided as applications of the definitions and results, although the examples are not new. 

The method presented here needs that the original network can be reconstructed from the {extension}, as explained in the following definition. In applications, to ensure the reconstruction is possible, one may add extra marks to the newly added vertices and edges (e.g. see Example~\ref{ex:ZandZ^2}). Nevertheless, after a new unimodular network is successfully constructed using the method, one may forget the extra marks and unimodularity will be preserved.


\begin{definition}
	\label{def:extension}
	Let $[\bs G_0, \bs o_0]$ be a unimodular network. An \defstyle{extension} of $[\bs G_0, \bs o_0]$ is a pair $([\bs G, \bs o], S)$, where $[\bs G, \bs o]$ is a (not necessarily unimodular) random rooted network and $S$ is a covariant subnetwork with the conditions that $\bs o\in S_{\bs G}$ a.s., $S_{\bs G}$ is connected a.s. and $[S_{\bs G}, \bs o]$ has the same distribution as $[\bs G_0, \bs o_0]$. It is called a \defstyle{proper extension} if
	\begin{equation}
	\label{eq:mtpOnS}
	\omid{\sum_{v\in S_{\bs G}} g[\bs G, \bs o, v]} = \omid{\sum_{v\in S_{\bs G}} g[\bs G, v, \bs o]}, \forall g
	\end{equation}
	where $g$ runs over all measurable functions $g:\mathcal G_{**}\rightarrow\mathbb R^{\geq 0}$. 
	Here, $[\bs G, \bs o]$ is allowed to have a larger mark space than $[\bs G_0, \bs o_0]$. 
\end{definition}

Note that by Lemma~\ref{lemma:happensAtRoot}, $[\bs G, \bs o]$ is non-unimodular except when $S_{\bs G}=\bs G$  a.s. Note also that $[\bs G, \bs o]$ is not necessarily a function of $[\bs G_0, \bs o_0]$; i.e. the newly added vertices and edges might be random. Moreover, \eqref{eq:mtpOnS} is stronger than unimodularity of $[S_{\bs G}, \bs o]$ (compare it with~\eqref{eq:unimodular} for $[S_{\bs G}, \bs o]$). 
We are interested in proper extensions only, since the results in this section only hold in the proper case.
See the following simple examples 
for more clarification of~\eqref{eq:mtpOnS}.

\begin{example}
	\label{ex:extensionZ}
	Let $G_0$ be the usual deterministic graph of $\mathbb Z$. Let $G$ be the graph obtained by adding a new vertex $v_{2n}$ for any even number $2n\in \mathbb Z$ and connecting it to the vertex $2n$. For the networks that have a unique bi-infinite path, let $S$ be the subnetwork that represents that path. Now, $[S_{G}, 0]$ has the same distribution as $[G_0, 0]$, but~\eqref{eq:mtpOnS} does not hold (e.g. let $g(u,v)$ be zero except when $d(u)=2$ and $d(v)=3$). So, $([G, 0],S)$ is an improper extension of $[G_0, 0]$.
	
	
	However, if one chooses $\bs o$ uniformly at random in $\{0, 1\}$, then it can be seen that \eqref{eq:mtpOnS} holds and $[G, \bs o]$ is a proper extension of $[G_0, 0]$. Moreover, by 
	choosing $\bs o'$ uniformly at random in $\{0, 1, v_0\}$, $[G, \bs o']$ is unimodular and is a strong unimodularization of $[G]$. 
	(Definition~\ref{def:unimodularization}). 
\end{example}

\begin{example}
	Let $[\bs G', \bs o']$ be a unimodular network and $S$ be a covariant subnetwork {such that $S_{\bs G'}$ is nonempty and connected a.s. Let} $[\bs G,\bs o]$ be the random rooted network obtained by conditioning $[\bs G', \bs o']$ on $\bs o'\in S_{\bs G'}$ (see Example~\ref{ex:subnetwork}). It can be seen that $([\bs G,\bs o], S)$ is a proper extension of $[S_{\bs G},\bs o]$ and by Example~\ref{ex:subnetwork}, $[\bs G', \bs o']$ is a weak unimodularization of $[\bs G]$. 
	This holds strongly 
	if and only if the sample intensity $\probCond{\bs o'\in S_{\bs G'}}{I}$ of $S$ in $\bs G'$ is {essentially constant}.
\end{example}

We are now ready to state the results of this section. All proofs are postponed to the end of the subsection.

\begin{lemma}
	\label{lemma:uniExt}
	Let $([\bs G, \bs o],S)$ be a proper extension of a unimodular network.
	If $[\bs G]$ can be unimodularized, then there is a unique strong unimodularization of $[\bs G]$ and it can be obtained by applying a root-change to $[\bs G, \bs o]$. 
\end{lemma}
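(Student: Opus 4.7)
The plan is to deduce this lemma as an almost immediate corollary of Proposition~\ref{prop:unimodulalrizationUnique} and the shift-coupling theorem (Theorem~\ref{thm:shiftCoupling}). The hypothesis that $([\bs G, \bs o], S)$ is a proper extension of some unimodular network does not need to enter this particular argument directly; what matters is only that $[\bs G]$ is a well-defined random non-rooted network that admits a (weak) unimodularization.

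First, I would handle the uniqueness half of the conclusion: by Proposition~\ref{prop:unimodulalrizationUnique}, the hypothesis that $[\bs G]$ can be unimodularized (weakly or strongly) already implies existence and uniqueness of a strong unimodularization, call it $[\bs G'', \bs o'']$. Thus the only remaining task is to show that $[\bs G'', \bs o'']$ arises from $[\bs G, \bs o]$ by a root-change in the sense of Definition~\ref{def:rootChange}.

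For this, I would unfold the definitions. By Definition~\ref{def:unimodularization}, $[\bs G'', \bs o'']$ being a strong unimodularization of $[\bs G]$ means precisely that the non-rooted networks $[\bs G'']$ and $[\bs G]$ have the same distribution on $(\mathcal G, J)$. This is exactly Condition~\ref{conditionF} of Definition~\ref{def:same} applied to the pair $([\bs G, \bs o], [\bs G'', \bs o''])$: their distributions agree on the invariant sigma-field $I$. Applying Theorem~\ref{thm:shiftCoupling} then yields the existence of a root-change kernel $T$ on $\mathcal G_{**}$ for which the root-change of $[\bs G, \bs o]$ by $T$ has the distribution of $[\bs G'', \bs o'']$, which is the desired conclusion.

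The main substantive step here is the invocation of Theorem~\ref{thm:shiftCoupling}, so there is no real obstacle once that theorem is in hand; the proof is essentially bookkeeping between the definitions of \emph{strong unimodularization}, \emph{strong unrooted-equivalence}, and \emph{root-change}. I would close by remarking that the properness assumption on the extension is used elsewhere in this section (for instance in guaranteeing that natural candidate biasings produce unimodular networks in the companion existence result), but plays no role in deriving uniqueness and the root-change representation from the shift-coupling theorem.
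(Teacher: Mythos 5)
Your proof is correct and follows essentially the same route as the paper: invoke Proposition~\ref{prop:unimodulalrizationUnique} for existence and uniqueness of the strong unimodularization, observe that by definition it is strongly unrooted-equivalent to $[\bs G, \bs o]$ (Condition~\ref{conditionF}), and apply Theorem~\ref{thm:shiftCoupling} to obtain the root-change. Your side remark that the properness hypothesis plays no direct role in this particular argument is also accurate.
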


Theorem~\ref{thm:extensionT} provides a general method to construct unimodularizations of a given proper extension. Moreover, part~\eqref{thm:extensionT:1} of the theorem gives a criteria for verifying existence or non-existence of a unimodularization. 

\begin{theorem}[Unimodularization of an Extension]
	\label{thm:extensionT}
	Let $([\bs G, \bs o],S)$ be a proper extension of a unimodular network. 
	Assume $T:\mathcal G_{**}\rightarrow \mathbb R^{\geq 0}$ is a measurable function such that $T_{\bs G}$ is almost surely a Markovian transport kernel from $V(\bs G)$ to $S_{\bs G}$; i.e. almost surely, for all $v\in V(\bs G)$,
	$T_{\bs G}^+(v) = 1$ and $T_{\bs G}(v,\cdot)\equiv 0$ on $V(\bs G)\setminus S_{\bs G}$.
%
	Let $M:= M([\bs G, \bs o]):= T_{\bs G}^-(\bs o)$. Then,
	\begin{enumerate}[(i)]
		\item 
		\label{thm:extensionT:1}
		$[\bs G]$ can be unimodularized if and only if $\omidCond{M}{I}<\infty$ a.s.
		\item
		\label{thm:extensionT:2}
		If $\omid{M}<\infty$, then the following probability measure gives a weak unimodularization of $[\bs G]$. 
		\[
		\mathcal P_{T}[A]:=\frac 1{\omid{M}} \omid{\sum_{v\in V({\bs G})}T_{{\bs G}}( v, \bs o)\identity{A}[{\bs G},v]}
		\]
		\item
		\label{thm:extensionT:3}
		If $\omidCond{M}{I}<\infty$ a.s., then the following probability measure gives the unique strong unimodularization of $[\bs G]$. 
		\[
		\mathcal P'_{T}[A]:= \omid{\frac 1{\omidCond{M}{I}}\sum_{v\in V({\bs G})}T_{{\bs G}}( v, \bs o)\identity{A}[{\bs G},v]}
		\]
	\end{enumerate}
\end{theorem}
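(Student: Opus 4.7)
The strategy is to establish (ii) and (iii) by direct verification, then derive (i) from (iii) together with the mass transport principle applied to any hypothetical unimodularization. The central technical device is a ``lifted transport'' that reduces the mass transport principle on $\mathcal{G}_{**}$ to the proper extension property \eqref{eq:mtpOnS}, which is available only on $S$.

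For the unimodularity of $\mathcal{P}_T$ and $\mathcal{P}'_T$, given a measurable $h:\mathcal{G}_{**}\to\mathbb{R}^{\geq 0}$ I would define
\[
\tilde h[G,a,b]:=c_G\sum_{v,u\in V(G)}T_G(v,a)\,T_G(u,b)\,h[G,v,u],
\]
with the $I$-measurable normalization $c_G=1/\omid{M}$ for $\mathcal{P}_T$ and $c_G=1/\omidCond{M}{I}$ for $\mathcal{P}'_T$. Since $T_G(\cdot,x)=0$ for $x\notin S_G$, $\tilde h$ is supported on $S_G\times S_G$. Using $T_G^+(u)=1$, a short calculation yields $\tilde h^+_G(\bs o)=c_G\sum_{v,u}T_G(v,\bs o)\,h[G,v,u]$ and $\tilde h^-_G(\bs o)=c_G\sum_{v,u}T_G(u,\bs o)\,h[G,v,u]$; taking expectations, these are exactly the expectations of $h^+_{G'}(\bs o')$ and $h^-_{G'}(\bs o')$ under $\mathcal{P}_T$ or $\mathcal{P}'_T$. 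Since $\bs o\in S_{\bs G}$ a.s., applying \eqref{eq:mtpOnS} to $\tilde h$ delivers the mass transport principle for these two measures. The non-rooted distribution is then read off from the definitions: for invariant $A$, using $\identity{A}[\bs G,v]=\identity{A}[\bs G,\bs o]$ and the tower property, one finds $\mathcal{P}_T[A]=\omid{\omidCond{M}{I}\identity{A}}/\omid{M}$ and $\mathcal{P}'_T[A]=\myprob{A}$, confirming that $\mathcal{P}'_T$ is a strong unimodularization of $[\bs G]$ and $\mathcal{P}_T$ realises the corresponding biasing. For mutual absolute continuity in (ii), I apply \eqref{eq:mtpOnS} with $g[G,a,b]:=T_G(a,b)\identity{B}[G]$ for invariant $B$ to obtain $\omidCond{\sum_{v\in S_{\bs G}}T_{\bs G}(v,\bs o)}{I}=1$, hence $\omidCond{M}{I}\geq 1>0$ a.s.; uniqueness in (iii) follows from Proposition~\ref{prop:unimodulalrizationUnique}.

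Part (i)'s implication $\omidCond{M}{I}<\infty$ a.s.\ $\Rightarrow$ unimodularization exists is immediate from (iii). For the converse, assume $[\bs G]$ admits a unimodularization; by Lemma~\ref{lemma:uniExt} the unique strong one, $\mathcal{P}^*$, is a root-change of $[\bs G,\bs o]$. Applying the mass transport principle for $\mathcal{P}^*$ to $g=T$ gives $\mathbb{E}_{\mathcal{P}^*}[M]=\mathbb{E}_{\mathcal{P}^*}[T^+]=1$, so $\mathbb{E}_{\mathcal{P}^*}[M\mid I]<\infty$ $\mathcal{P}^*$-a.s., equivalently $\mu_0$-a.s.\ since $\mathcal{P}^*$ and the original share the non-rooted law. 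The main obstacle is to transfer this to finiteness of $\omidCond{M}{I}$ under the original measure, because the two conditional expectations integrate $M$ against different $I$-conditional root distributions over the same unrooted network. I plan to resolve this by analysing $\mathcal{P}^*$ restricted to the event $\{\bs o^*\in S_{\bs G^*}\}$: Example~\ref{ex:subnetwork} shows this gives a weak unimodularization of $[S_{\bs G^*}]$, which by Lemma~\ref{lemma:unimodularNonErgodic} combined with Theorem~\ref{thm:unimodularUnique} differs from the unique strong unimodularization $[\bs G_0,\bs o_0]$ only by an $I$-measurable biasing on the $S$-space. Since $M$ vanishes at roots outside $S$ and such biasing preserves the conditional root distribution given the unrooted $S$-network, this identification yields a relation of the form $\mathbb{E}_{\mathcal{P}^*}[M\mid I]=p_0\cdot\omidCond{M}{I}$ with $p_0:=\mathcal{P}^*(\bs o^*\in S_{\bs G^*}\mid I)$; a conditional application of Lemma~\ref{lemma:happensAtRoot} (to $\mathcal{P}^*$ restricted to each invariant event) shows $p_0>0$ a.s., transferring finiteness and completing the argument.
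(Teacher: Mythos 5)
Your treatment of parts (ii) and (iii) is essentially the paper's argument: the ``lifted transport'' $\tilde h$ you define is the paper's $\hat g$ (up to where the normalization sits), and applying the proper extension identity \eqref{eq:mtpOnS} to it gives exactly the mass transport principle for $\mathcal P_T$ and $\mathcal P'_T$; your computation that $\mathcal P'_T[A]=\myprob{A}$ for invariant $A$ is the paper's verification of strong unimodularization, and your positivity argument $\omidCond{M}{I}\ge 1$ (via $g=T\cdot\identity{B}$ in \eqref{eq:mtpOnS}) is a tidy variant of the paper's softer $\omidCond{M}{I}>0$.

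The gap is in the converse of part (i). The target identity
$\mathbb{E}_{\mathcal{P}^*}[M\mid I]=p_0\cdot\omidCond{M}{I}$
is indeed correct, but the route you propose does not establish it. You compare $\mathcal{P}^*$ restricted to $\{\bs o^*\in S_{\bs G^*}\}$ with $[\bs G_0,\bs o_0]$ only after forgetting the complement of $S$ (Example~\ref{ex:subnetwork} and Theorem~\ref{thm:unimodularUnique} live on the space of rooted $S$-networks). That comparison gives you equality of root distributions only conditionally on the unrooted $S$-network $[S_{\bs G}]$, whereas both sides of your identity involve conditioning on the full unrooted network $[\bs G]$, which is strictly finer. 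The quantity $M=T^-_G(o)$ is a functional of the full network, not of $[S_G,o]$, and there is no reason for the root distribution given $[S_{\bs G}]$ to agree with the one given $[\bs G]$. So ``such biasing preserves the conditional root distribution given the unrooted $S$-network'' is true but insufficient.

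The paper avoids this by invoking Lemma~\ref{lemma:extBias} directly on the full $\mathcal G_*$-space: $[\bs G,\bs o]$ is obtained from the unimodularization by biasing by a function $w$ that is a.s.\ constant on $S_{\bs G'}$ (the constant is $I$-measurable) and zero off $S_{\bs G'}$. Since $M$ vanishes off $S$, one then has $wM=c[\bs G']\,M$ with $c$ an $I$-measurable scalar, and the change-of-measure identity for $I$-conditional expectations immediately gives
$\omidCond{M}{I}_P=\omidCond{wM}{I}_{P'}/\omidCond{w}{I}_{P'}=\mathbb{E}_{P'}[M\mid I]/\probPalmC{P'}{\bs o'\in S}{I}$,
which is finite a.s.\ because $\mathbb{E}_{P'}[M]=1$ and the denominator is positive a.s.\ by Lemma~\ref{lemma:happensAtRoot}. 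If you replace your detour through the $S$-restriction with an appeal to Lemma~\ref{lemma:extBias} as above, your argument becomes correct and in fact coincides with the paper's proof of this step.
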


\begin{remark}
	\label{rem:biasByM}
	The probability measure $\mathcal P_T$ (resp. $\mathcal P'_T$) in Theorem~\ref{thm:extensionT} can be described as biasing the distribution of $[\bs G, \bs o]$ by $M$ (resp. ${M}/{\omidCond{M}{I}}$) and then changing the root to a random vertex with distribution $\frac 1M T_{{\bs G}}(\cdot, \bs o)$.
\end{remark}

As an example, it can be seen that the unimodularization of Example~\ref{ex:extensionZ} can be obtained by the constructions in Theorem~\ref{thm:extensionT}. More examples are provided at the end of this section.


\begin{corollary}
	\label{cor:extensionUnique}
	Under the assumptions of Theorem~\ref{thm:extensionT}, if $\myprob{[\bs G, \bs o]\in A}\in \{0,1\}$ for all invariant events $A\in I$, then $\mathcal P_T$ and $\mathcal P'_T$ are equal and extremal. Moreover, $[\bs G]$ has a unique weak unimodularization. 
\end{corollary}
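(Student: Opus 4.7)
The plan is to exploit the ergodic hypothesis (every invariant event has probability $0$ or $1$ under $[\bs G, \bs o]$) to collapse every $I$-measurable function to an essential constant, and then invoke the uniqueness results of Section~\ref{sec:unimodularCase}. The whole argument is short once the previous machinery is in place.

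First, I would observe that $M$ is a nonnegative measurable function of $[\bs G, \bs o]$, so the hypothesis forces the $I$-measurable function $\omidCond{M}{I}$ to be almost surely constant; integrating against the whole space gives $\omidCond{M}{I}=\omid{M}$ a.s.\ (finite or infinite). If $\omidCond{M}{I}=\infty$ a.s., then by Theorem~\ref{thm:extensionT}\eqref{thm:extensionT:1} there is no weak unimodularization and there is nothing to prove. In the remaining case, $\omid{M}<\infty$ and substituting $\omidCond{M}{I}=\omid{M}$ into the definitions of $\mathcal P_T$ and $\mathcal P'_T$ immediately yields $\mathcal P_T=\mathcal P'_T$.

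Next, for the extremality of $\mathcal P_T$, I would trace the non-rooted distribution through the two-step construction of Remark~\ref{rem:biasByM}. By Lemma~\ref{lemma:biasG}, biasing $[\bs G, \bs o]$ by $M$ biases the law of $[\bs G]$ by $\omidCond{M}{I}$, which is the constant $\omid{M}$ by the previous paragraph; hence the non-rooted distribution is unchanged. A subsequent root-change preserves the non-rooted network by Definition~\ref{def:rootChange}. Consequently, $\mathcal P_T$ agrees with $[\bs G, \bs o]$ on the invariant sigma-field $I$, so under $\mathcal P_T$ every invariant event still has probability $0$ or $1$; combined with the unimodularity of $\mathcal P_T$ from Theorem~\ref{thm:extensionT}\eqref{thm:extensionT:2}, this is exactly extremality.

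Finally, for the uniqueness of weak unimodularization, let $[\bs G'', \bs o'']$ be any weak unimodularization of $[\bs G]$. Its non-rooted distribution is mutually absolutely continuous with that of $[\bs G]$, which in turn equals the non-rooted distribution of $\mathcal P'_T$ (the strong unimodularization). Thus $[\bs G'', \bs o'']$ and $\mathcal P'_T$ are weakly unrooted-equivalent in the sense of Definition~\ref{def:weakStrong}. Since $\mathcal P'_T=\mathcal P_T$ is extremal unimodular, Proposition~\ref{prop:unimodularErgodic} upgrades this to strong unrooted-equivalence, and Theorem~\ref{thm:unimodularUnique} then forces $[\bs G'', \bs o''] = \mathcal P'_T$. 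The only delicate point in the whole argument is keeping track of possibly infinite values of $\omidCond{M}{I}$ in the identification $\omidCond{M}{I}=\omid{M}$, but this is dispatched by splitting off the trivial case where no unimodularization exists.
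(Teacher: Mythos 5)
Your proof is correct and follows essentially the same path as the paper: deduce extremality from the fact that the strong unimodularization $\mathcal P'_T$ shares the non-rooted distribution (hence the $0$-$1$ law on $I$) with $[\bs G, \bs o]$, then combine Proposition~\ref{prop:unimodularErgodic} with Theorem~\ref{thm:unimodularUnique} for uniqueness. You spell out more explicitly than the paper does that $\omidCond{M}{I}$ becomes the constant $\omid M$ (forcing $\mathcal P_T = \mathcal P'_T$ directly) and handle the degenerate case $\omidCond{M}{I}=\infty$; the paper obtains $\mathcal P_T = \mathcal P'_T$ only implicitly, as a consequence of the uniqueness of weak unimodularizations.
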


In Theorem~\ref{thm:extensionT}, the function $T$ is assumed to be given. In the following theorem, the existence of $T$ is studied using Theorem~\ref{thm:shiftCoupling}.

\begin{theorem}[{Existence of $T$}]
	\label{thm:uniExtGen}
	Let $([\bs G, \bs o],S)$ be a proper extension of a unimodular network.
	If $[\bs G]$ 
	can be unimodularized, then 
	\begin{enumerate}[(i)]
		\item 	\label{thm:uniExtGen:T} 
			There exists a function $T:\mathcal G_{**}\rightarrow\mathbb R^{\geq 0}$ satisfying the assumptions in Theorem~\ref{thm:extensionT} such that $\mathcal P'_T$ exists. 
		
		\item \label{thm:uniExtGen:all}
			Any unimodularization of $[\bs G]$ can be obtained by biasing $\mathcal P'_{T}$ of the previous part by a function that doesn't depend on the root.
		\item 	\label{thm:uniExtGen:Balance}
			$T$ can be chosen such that almost surely, $T_{\bs G}^-(\cdot)$ is constant on $S_{\bs G}$ (and depends only on $[\bs G]$).
		
	\end{enumerate}
\end{theorem}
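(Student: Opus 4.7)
The plan is to exploit Proposition~\ref{prop:unimodulalrizationUnique}, which produces the unique strong unimodularization $[\bs G^*, \bs o^*]$ of $[\bs G]$, and then apply the balancing theorem (Theorem~\ref{thm:balancing}) on $[\bs G^*, \bs o^*]$ to manufacture the kernel $T$. Parts (iii) and (i) will follow from a single construction of $T$, while (ii) will come from Theorem~\ref{thm:unimodularUnique} together with Lemma~\ref{lemma:unimodularBiasUni}.

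For (iii), define $c[\bs G^*] := 1/\probCond{\bs o^* \in S_{\bs G^*}}{I}$; this requires showing the denominator is a.s.\ positive. If the invariant event $A := \{\probCond{\bs o^* \in S_{\bs G^*}}{I} = 0\}$ had positive probability, let $S'$ be the covariant subnetwork equal to $S_G$ when the (invariant) event $A$ holds and empty otherwise. Then $\myprob{S'_{\bs G^*} \neq \emptyset} = \myprob{A} > 0$ (using $S_{\bs G^*} \neq \emptyset$ a.s., which holds because $[\bs G^*] \sim [\bs G]$ and $\bs o \in S_{\bs G}$ a.s.), whereas $\myprob{\bs o^* \in S'_{\bs G^*}} = \omid{\identity{A}\,\probCond{\bs o^* \in S_{\bs G^*}}{I}} = 0$, contradicting Lemma~\ref{lemma:happensAtRoot} applied to $[\bs G^*, \bs o^*]$. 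Hence $c$ is $I$-measurable and a.s.\ finite. Now apply Theorem~\ref{thm:balancing} on $[\bs G^*, \bs o^*]$ with $w_1 \equiv 1$ and $w_2[\bs G^*, v] := c[\bs G^*]\,\identity{v \in S_{\bs G^*}}$; the hypothesis~\eqref{eq:thm:balancing0} holds by the choice of $c$. The resulting invariant transport kernel $T$ satisfies, a.s., $T^+_{\bs G^*}(v) = 1$ and $T^-_{\bs G^*}(v) = c[\bs G^*]\,\identity{v \in S_{\bs G^*}}$ for every $v$. In particular, $T_{\bs G^*}(v, w) = 0$ whenever $w \notin S_{\bs G^*}$, so $T_{\bs G^*}$ is Markovian into $S_{\bs G^*}$, and $T^-_{\bs G^*}$ is constant on $S_{\bs G^*}$ with a value that is a function of $[\bs G^*]$ alone.

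Since these properties are quantified over all vertices of the non-rooted graph, and since $[\bs G]$ and $[\bs G^*]$ have the same non-rooted law, the same properties of $T$ hold a.s.\ on $[\bs G, \bs o]$. Thus $T$ meets the hypotheses of Theorem~\ref{thm:extensionT} on $[\bs G, \bs o]$, and using $\bs o \in S_{\bs G}$ a.s.\ one computes $M = T^-_{\bs G}(\bs o) = c[\bs G]$, which is $I$-measurable and a.s.\ finite; hence $\omidCond{M}{I} = c[\bs G] < \infty$ a.s., and Theorem~\ref{thm:extensionT}(iii) produces the strong unimodularization $\mathcal P'_T$, proving (i).

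For (ii), let $[\bs G'', \bs o'']$ be any unimodularization of $[\bs G]$, and let $w'$ be the Radon--Nikodym derivative of the law of $[\bs G'']$ with respect to $\mu_0 := \pi_* \mathrm{law}([\bs G])$, a.s.\ finite by the assumed mutual absolute continuity of the two non-rooted laws. Setting $w := w' \circ \pi$, which is $I$-measurable and hence does not depend on the root, biasing $\mathcal P'_T$ by $w$ yields a unimodular measure (by Lemma~\ref{lemma:unimodularBiasUni}) whose non-rooted law equals that of $[\bs G'']$ (by Lemma~\ref{lemma:biasG}); Theorem~\ref{thm:unimodularUnique} then forces this biased measure to coincide with the law of $[\bs G'', \bs o'']$. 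The principal technical obstacle is the a.s.\ positivity of $\probCond{\bs o^* \in S_{\bs G^*}}{I}$ addressed above, since the rest is a direct assembly of previously established results.
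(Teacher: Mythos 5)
Your proof is correct and follows essentially the same route as the paper: take the strong unimodularization supplied by Proposition~\ref{prop:unimodulalrizationUnique}, apply Theorem~\ref{thm:balancing} there with $w_1\equiv 1$ and $w_2$ proportional to $\identity{\{\cdot\in S\}}$ scaled by the reciprocal sample intensity of $S$, and transfer the resulting Markovian kernel back to $[\bs G,\bs o]$ using equality of the non-rooted laws. The only differences are cosmetic: you spell out the a.s.\ positivity of $\probCond{\bs o^*\in S_{\bs G^*}}{I}$ that the paper dismisses with ``similar to Lemma~\ref{lemma:happensAtRoot},'' and in part~(ii) you unfold Lemma~\ref{lemma:unimodularNonErgodic} into its constituents (the Radon--Nikodym derivative together with lemmas~\ref{lemma:unimodularBiasUni}, \ref{lemma:biasG} and Theorem~\ref{thm:unimodularUnique}) rather than citing it directly.
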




\begin{remark}[Comparison of $\mathcal P_T$ vs $\mathcal P'_T$]
	The unimodularization $\mathcal P_{T}$ in Theorem~\ref{thm:extensionT} has some advantages comparing to $\mathcal P'_T$ and the other unimodularizations. One advantage is that it is easier to use since there is no division in the term under expectation and no conditional expectation w.r.t. $I$. Another is that the numerator 
	in the definition of $\mathcal P_T[\cdot]$ (which is $\omid{M}\times \mathcal P_T[\cdot]$) is a linear function of the distribution of $[\bs G, \bs o]$. Another one is shown in the following proposition. In contrast, an advantage of $\mathcal P'_T$ is that it is the strong unimodularization 
	and thus, it doesn't change the distribution of the underlying non-rooted network (see Condition~\ref{conditionF} in Definition~\ref{def:same} and Example~\ref{ex:path}). Moreover, in some cases $\mathcal P'_T$ is defined but $\mathcal P_T$ is not, as shown in Example~\ref{ex:path}.
\end{remark}

\begin{proposition}
	\label{prop:P_T}

	Let $([\bs G, \bs o],S)$ be a proper extension of a unimodular network.
	\begin{enumerate}[(i)]
		\item \label{prop:P_T:1} There is at most one unimodularization $[\bs G', \bs o']$ of $[\bs G]$ 
		with the property that $[\bs G', \bs o']$ conditioned on $\bs o'\in S_{\bs G'}$ has the same distribution as $[\bs G, \bs o]$.
		\item \label{prop:P_T:2} Under the assumptions of Theorem~\ref{thm:extensionT}, the unimodularization in the previous part exists if and only if $\omid{M}<\infty$ and is given by $\mathcal P_T$, which is a weak unimodularization. 
	\end{enumerate}
\end{proposition}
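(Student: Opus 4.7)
My plan is to reduce both parts to computations invoking the mass transport principle in two complementary forms: the global form arising from unimodularity of the $[\bs{G}'_i,\bs{o}'_i]$'s in part~(i), and the ``local'' form~\eqref{eq:mtpOnS} built into the definition of a proper extension in part~(ii). The uniqueness statement of part~(i) will follow once I show that any two candidate unimodularizations must agree on the invariant sigma-field $I$, because Theorem~\ref{thm:unimodularUnique} then upgrades agreement on $I$ to full distributional equality. Part~(ii) becomes a direct unpacking of the defining formula for $\mathcal{P}_T$ together with~\eqref{eq:mtpOnS}, in which the normalization $\omid{M}$ naturally produces the required existence criterion.

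For part~(i), denote the distributions of $[\bs{G}'_i,\bs{o}'_i]$ and $[\bs{G},\bs{o}]$ by $\nu_i$ and $\nu$ respectively, and set $c_i:=\myprob{\bs{o}'_i\in S_{\bs{G}'_i}}$; this is positive by Lemma~\ref{lemma:happensAtRoot}, using weak unrooted-equivalence between $[\bs{G}'_i]$ and $[\bs{G}]$ (in which $S$ is nonempty a.s., as $\bs{o}\in S_{\bs{G}}$ a.s.). The conditioning hypothesis then says the restriction of $\nu_i$ to $\{\bs{o}'\in S_{\bs{G}'}\}$ equals $c_i\,\nu$. I would choose any covariant Markovian transport kernel $T:\mathcal G_{**}\to\mathbb R^{\geq 0}$ from $V$ to $S$---for example, uniform on the vertices of $S_G$ at minimum distance from a given vertex, with a covariant tie-breaking rule; this is well-defined by local finiteness and a.s.\ nonemptiness of $S_G$. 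Applying unimodularity of $\nu_i$ to $g_G(o,v):=T_G(o,v)\identity{A}[G,o]$ for $A\in I$, the Markovian property $T^+=1$ collapses the $+$-side to $\identity{A}[G,o]$, while the $-$-side contains $T^-_G(o)$, which vanishes off $S_G$ and so may be evaluated under $\nu_i|_{\{\bs{o}'\in S\}}=c_i\nu$. This yields
\[
\nu_i(A)=c_i\,\omid{M\,\identity{A}[\bs{G},\bs{o}]}, \qquad M:=T^-_{\bs{G}}(\bs{o}).
\]
Taking $A=\mathcal G_*$ pins down $c_i=1/\omid{M}$ (automatically forcing $\omid{M}<\infty$), a value independent of $i$; hence $\nu_1$ and $\nu_2$ agree on $I$, and Theorem~\ref{thm:unimodularUnique} completes the argument.

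For part~(ii), the forward direction is the special case of the above with $T$ taken from the hypothesis of Theorem~\ref{thm:extensionT}. For the converse, assume $\omid{M}<\infty$, so that $\mathcal{P}_T$ is a weak unimodularization of $[\bs{G}]$ by Theorem~\ref{thm:extensionT}(ii); it remains only to check the conditioning property. Intersecting $A$ with $\{\bs{o}'\in S_{\bs{G}'}\}$ in the defining formula for $\mathcal{P}_T$ restricts the inner sum to $v\in S_{\bs{G}}$, and then~\eqref{eq:mtpOnS} applied to $g[G,u,v]:=T_G(v,u)\identity{A}[G,v]$ swaps $\bs{o}$ and $v$; the Markovian property $T^+_{\bs{G}}(\bs{o})=1$ (together with $T$ mapping into $S$) collapses the result to $\myprob{[\bs{G},\bs{o}]\in A}$, giving
\[
\mathcal{P}_T\bigl[A\cap\{\bs{o}'\in S_{\bs{G}'}\}\bigr] = \frac{\myprob{[\bs{G},\bs{o}]\in A}}{\omid{M}}.
\]
Normalizing by the $A=\mathcal G_*$ value of the left-hand side produces $\mathcal{P}_T[A\mid\bs{o}'\in S_{\bs{G}'}]=\myprob{[\bs{G},\bs{o}]\in A}$, as required.

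The main delicacy lies in part~(i): one needs a covariant Markovian kernel $T$ from $V$ to $S$ without assuming the setup of Theorem~\ref{thm:extensionT}, and one must justify substituting the restriction identity $\nu_i|_{\{\bs{o}'\in S\}}=c_i\nu$ inside the unimodularity expectation while the total expectation $\omid{M}$ might a priori be infinite. Both issues are resolved quickly---the nearest-vertex construction of $T$ is covariant and Markovian whenever $S_G$ is nonempty and locally finite, and taking $A=\mathcal G_*$ at the outset confirms $\omid{M}<\infty$ so that all subsequent bookkeeping is legitimate.
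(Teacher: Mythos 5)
Your proof is correct, and it takes a genuinely different route from the paper's. The paper leans on Theorem~\ref{thm:uniExtGen}\eqref{thm:uniExtGen:all}, which says every unimodularization of $[\bs G]$ is a biasing of $\mathcal P'_T$ by a root-independent function $w$; writing an arbitrary unimodularization $[\widetilde{\bs G},\widetilde{\bs o}]$ as such a biasing, it computes $\myprob{\widetilde{\bs o}\in S_{\widetilde{\bs G}},\,[\widetilde{\bs G},\widetilde{\bs o}]\in A}$ from the defining formula of $\mathcal P'_T$, applies~\eqref{eq:mtpOnS} once to swap the two roots, and concludes that the conditioning property holds if and only if $w/\omidCond{M}{I}$ is essentially constant, i.e.\ if and only if the unimodularization is $\mathcal P_T$. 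You bypass Theorem~\ref{thm:uniExtGen} and $\mathcal P'_T$ entirely: for part~(i) you build an auxiliary covariant Markovian kernel (nearest-vertex into $S$) and test unimodularity of each candidate $\nu_i$ against $g_G(o,v)=T_G(o,v)\identity{A}[G,o]$ for $A\in I$; since the mass-in side $T^-$ vanishes off $S$, the conditioning hypothesis converts that side into an integral against $\nu$, giving $\nu_i(A)=c_i\,\omid{M\,\identity{A}[\bs G,\bs o]}$ with $c_i$ pinned to $1/\omid{M}$ by taking $A=\mathcal G_*$. Agreement on $I$ follows, and Theorem~\ref{thm:unimodularUnique} upgrades this to equality of the full distributions. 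Part~(ii) is then settled by verifying directly via~\eqref{eq:mtpOnS} that $\mathcal P_T$ satisfies the conditioning property. What your route buys is self-containment at the level of the mass transport principle plus Theorem~\ref{thm:unimodularUnique}, without invoking the structural classification of Theorem~\ref{thm:uniExtGen}; the paper's route, in exchange, gives an explicit description of which biasings of $\mathcal P'_T$ fail the conditioning property, making the ``only $\mathcal P_T$'' conclusion more transparent. One small wording point: the uniform measure on the nearest vertices of $S_G$ is already covariant, so the added phrase ``with a covariant tie-breaking rule'' is superfluous, and if read as a deterministic selection it would in general be impossible in the presence of nontrivial network automorphisms.
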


\begin{corollary}
	\label{cor:P_T-notDepend}
	The probability measures $\mathcal P_T$ and $\mathcal P'_T$ of Theorem~\ref{thm:extensionT} (and their existence) do not depend on the choice of the function $T$. 
\end{corollary}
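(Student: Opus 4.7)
The plan is to derive the corollary from the intrinsic characterizations of $\mathcal P'_T$ and $\mathcal P_T$ already established in Theorem~\ref{thm:extensionT} and Proposition~\ref{prop:P_T}. Once each of these measures is identified, whenever it exists, with a unique object defined purely in terms of $[\bs G, \bs o]$ and $S$ (with no reference to $T$), independence of $T$ is immediate. So the work reduces to reading off the right characterizations and then closing the small remaining loop concerning the analytic integrability conditions.

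For $\mathcal P'_T$, I would invoke Theorem~\ref{thm:extensionT}(iii), which asserts that whenever $\omidCond{M}{I}<\infty$ a.s.\ the measure $\mathcal P'_T$ equals the unique strong unimodularization of $[\bs G]$ (uniqueness coming from Proposition~\ref{prop:unimodulalrizationUnique} and Theorem~\ref{thm:unimodularUnique}). Since this unique strong unimodularization depends only on the law of $[\bs G]$, its value is $T$-independent whenever defined. For the existence side, Theorem~\ref{thm:extensionT}(i) shows that $\omidCond{M}{I}<\infty$ a.s.\ is equivalent to the intrinsic statement ``$[\bs G]$ can be unimodularized''; this is manifestly $T$-independent, so existence of $\mathcal P'_T$ does not depend on the choice of $T$ either.

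For $\mathcal P_T$, I would use Proposition~\ref{prop:P_T}. Part~(i) characterizes at most one unimodularization of $[\bs G]$ whose conditioning on $\{\bs o'\in S_{\bs G'}\}$ recovers the distribution of $[\bs G,\bs o]$, a property formulated purely in terms of $[\bs G,\bs o]$ and $S$. Part~(ii) then identifies $\mathcal P_T$ with this unique measure precisely when $\omid{M}<\infty$. Therefore, if two admissible kernels $T_1$ and $T_2$ both give $\omid{M_i}<\infty$, the resulting measures $\mathcal P_{T_1}$ and $\mathcal P_{T_2}$ must coincide, since both equal the same intrinsically defined unimodularization.

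The only subtle point, and the main obstacle I foresee, is that a priori the existence clause ``$\omid{M}<\infty$'' depends on the specific $T$. The resolution is indirect rather than by a direct $T_1$-to-$T_2$ comparison: the ``if and only if'' in Proposition~\ref{prop:P_T}(ii) ties $\omid{M}<\infty$ to the existence of the intrinsically defined unimodularization of Proposition~\ref{prop:P_T}(i). Since the latter depends neither on $T_1$ nor on $T_2$, we must have $\omid{M_1}<\infty$ exactly when $\omid{M_2}<\infty$, and then both measures agree. Combining this with the corresponding argument for $\mathcal P'_T$ yields the full corollary.
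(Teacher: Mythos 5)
Your proof is correct and follows essentially the same route as the paper's one-line proof, which cites Lemma~\ref{lemma:uniExt} and Proposition~\ref{prop:P_T}: both hinge on identifying $\mathcal P'_T$ with the unique strong unimodularization (existence characterized intrinsically via Theorem~\ref{thm:extensionT}(i)) and $\mathcal P_T$ with the unique unimodularization singled out in Proposition~\ref{prop:P_T}(i) (existence characterized via the ``if and only if'' in part~(ii)). You correctly flag and resolve the only subtlety, namely that the integrability clauses $\omidCond{M}{I}<\infty$ and $\omid{M}<\infty$ must be shown $T$-independent, which follows from their equivalence to the intrinsic existence statements.
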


\begin{remark}
	For a proper extension $([\bs G, \bs o],S)$ of a unimodular network, if $[\bs G]$ has at least two different unimodularizations, then by taking convex combinations of the probability measures one finds infinitely many unimodularizations. So, Corollary~\ref{cor:P_T-notDepend} implies that there are unimodularizations which cannot be obtained by parts~\eqref{thm:extensionT:2} and~\eqref{thm:extensionT:3} of Theorem~\ref{thm:extensionT} (note that a further biasing is enough as described in part~\eqref{thm:uniExtGen:all} of Theorem~\ref{thm:uniExtGen}).
\end{remark}

\begin{remark}
	\label{rem:extensionTau}
	There is a special case of the construction in Theorem~\ref{thm:extensionT} which is sometimes easier to imagine and use, if possible. In the theorem, assume that for all vertices $v$, the function $T_{G}(v,\cdot)$ is concentrated on a single vertex, namely $\tau_{G}(v)$. Such a mapping $\tau$ can be called an \defstyle{allocation} (or a \defstyle{vertex-shift} in~\cite{vertexShift}). Then, the assumptions for $T$, apart from measurability, are reduced to $\tau_{\bs G}(\cdot)\in S_{\bs G}$ a.s. Also, $M$ is equal to $\card{\tau_{\bs G}^{-1}(\bs o)}$. However, In the setting of Theorem~\ref{thm:uniExtGen}, existence of such an allocation is not guaranteed in general.
\end{remark}

The following are some basic examples for applications of the definitions and results, although they are not new.

\begin{example}
	\label{ex:ZandZ^2}
	Let $[G_1, o_1]$ be the deterministic graph of $\mathbb Z$ and $(G_2, o_2)$ be the usual lattice graph of $\mathbb Z^2$ rooted at the origin. Although $[G_2, o_2]$ is unimodular and can be obtained by adding some vertices and edges to $[G_1, o_1]$, it is not regarded as an extension of $[G_1, o_1]$ in Definition~\ref{def:extension} because $[G_1, o_1]$ cannot be recovered from $[G_2, o_2]$ as a subnetwork (that does not depend on the root). However, one may add an extra mark to the vertices outside the $x$ axis in $\mathbb Z^2$ to obtain a network, namely $(G, o)$. By letting $P_G$ be the distinguished path in $G$, $([G,o],P)$ is an extension of $[G_1, o_1]$. It is easy to see that~\eqref{eq:mtpOnS} holds and this extension is proper. But $[G]$ cannot be unimodularized as shown below.
	For $v\in V(G)$, let $\tau_{G}(v)$ be the closest vertex in the distinguished path $P_G$ to $v$. It can be seen that the assumptions in Remark~\ref{rem:extensionTau} hold for $\tau$. Since $\card{\tau_{G}^{-1}(o)}=\infty$, part~\eqref{thm:extensionT:1} of Theorem~\ref{thm:extensionT} implies that $[G]$ cannot be unimodularized.

	%
\end{example}

\begin{example}
	\label{ex:path}
	Let $[G_0,o_0]$ be the graph with a single vertex and $L$ be a random odd number in $\mathbb N$. Let $\bs G$ be a path with length $L$  and $\bs o$ be the middle vertex. Then, $[\bs G, \bs o]$ is a proper extension of $[G_0, o_0]$. By sending unit mass from each vertex to $\bs o$, one gets $M=L$ in Theorem~\ref{thm:extensionT} and $\omidCond{M}{I}=L<\infty$. Therefore, $[\bs G]$ can always be unimodularized. Indeed, $\mathcal P'_T$ is just changing the root to a uniformly at random vertex in $[\bs G]$. However, $\mathcal P_T$ is defined only when $\omid{L}<\infty$. In this case, $\mathcal P_T$ is equivalent to choosing a path whose probability of having length $k$ is $k\myprob{L=k}/\omid{L}$ (i.e. the size-biased version of $L$) and then, choosing a uniformly at random root in the path. Note that the distribution of the underlying non-rooted network is changed under $\mathcal P_T$, but unchanged under $\mathcal P'_T$.
\end{example}

The following examples shows that examples~9.6 and~9.8 of~\cite{processes} are weak unimodularizations 
and can be obtained by the method of Theorem~\ref{thm:extensionT}. 
\begin{example}[Vertex/Edge Replacement]
	\label{ex:replacement}
	Let $[\bs G_0, \bs o_0]$ be a unimodular network. In Example~9.6 of~\cite{processes}, it is shown how to attach to each edge $uv$ of $\bs G_0$ a doubly-rooted network $L(u,v)$ that depends on the marks of $u$ and $v$. The two roots of $L(u,v)$ are identified with $u$ and $v$. Here, to make sure $\bs G_0$ can be reconstructed, add some extra mark to the other added vertices and let $\bs G$ be the resulting network. Now, it can be seen that $[\bs G, \bs o_0]$ gives a proper extension of $[\bs G_0, \bs o_0]$. So, we may use Theorem~\ref{thm:extensionT} as follows. Send unit mass from each vertex of $\bs G_0$ to itself. Also, for each edge $uv$ and each vertex in $L(u,v)$ other than the roots, send mass $\frac 12$ to each of $u$ and $v$. It can be seen that this satisfies the assumptions of Theorem~\ref{thm:extensionT} and one has $M=1+\frac 12\sum_{v\sim \bs o}(\card{V(L(\bs o, v))}-2)$. Therefore, by Theorem~\ref{thm:extensionT}, $[\bs G]$ can be unimodularized if and only if $\omidCond{M}{I}<\infty$ and $\mathcal P'_T$ gives the strong unimodularization of $[\bs G]$. 
	Moreover, the probability measure constructed in~\cite{processes} (defined when $\omid{M}<\infty$) is identical with $\mathcal P_T$, which is a weak unimodularization. 
	
	With similar arguments, one can append a rooted network to each vertex of $\bs G_0$ and use the method of Theorem~\ref{thm:extensionT} to obtain another unimodular network.

\end{example}

\begin{example}[Planar Dual II]
	\label{ex:dual2}
	Let $G_0$ be a plane graph. Consider the union of $G_0$ and its dual $G_0^*$ and add an edge $vf$ for each vertex $v$ and each of its adjacent faces $f\sim v$. To make sure $G_0$ can be recovered, add an extra mark to the vertices of $G_0^*$. Call the resulting network $G_0^{\dagger}$ as a function of $G_0$. Let $S$ be a covariant subnetwork such that $S_{G_0^{\dagger}}=G_0$ for all possible cases of $G_0$ in this construction.
	
	Let $[\bs G_0, \bs o_0]$ be a unimodular plane graph (see Example~\ref{ex:dual1}) and consider the random rooted network $[\bs G, \bs o]:=[\bs G_0^{\dagger},\bs o_0]$ as constructed above rooted at $\bs o:=\bs o_0$. 
	It can be seen that $([\bs G, \bs o], S)$ is a proper extension of $[\bs G_0, \bs o_0]$. We will use Theorem~\ref{thm:extensionT}. 
	Send unit mass from each vertex $v\in V(\bs G_0)$ to itself. From each face $f$, send mass $\frac 1{\text{deg}(f)}$ to each of its adjacent vertices in $\bs G_0$, where ${\text{deg}(f)}$ is the number of vertices of $f$. It can be seen that this satisfies the assumptions of Theorem~\ref{thm:extensionT} and $M=1+\sum_{f\sim \bs o} 1/{\text{deg}(f)}$.
%
%
	Theorem~\ref{thm:extensionT} implies that $[\bs G]$ can be unimodularized if and only if $\omidCond{M}{I}<\infty$.
	If in addition $\omid{M}<\infty$, then the following gives a weak unimodularization of $[\bs G]$. 
	\begin{eqnarray*}
		\mathcal P_{T}[A]:= \frac 1{\omid{M}}\omid{\identity{A}[\bs G, \bs o] + \sum_{f\sim \bs o}\frac 1{\text{deg}(f)}\identity{A}[\bs G, f]}.
	\end{eqnarray*}
	By passing to $\bs G_0^*$ as a subnetwork of $\bs G$ as in Example~\ref{ex:subnetwork},
	one can obtain a weak unimodularization of the dual random non-rooted graph $[\bs G_0^*]$. 
	This is obtained by replacing $M$ by $M-1$ in the above formula and deleting the term $\identity{A}[\bs G, \bs o]$. This is identical to the one constructed in Example~9.6 of~\cite{processes}. Similarly, the following is the strong unimodularization of $[\bs G_0^*]$. 
	\begin{eqnarray*}
		A \mapsto  \omid{\frac 1{\omidCond{M-1}{I}}\sum_{f\sim \bs o_0}\frac 1{\text{deg}(f)}\identity{A}[\bs G_0^*, f]}.
	\end{eqnarray*}

	
	Similar to this example, one can add a new vertex for each edge-crossing of $\bs G_0$ with its dual. With similar arguments, the method of Theorem~\ref{thm:extensionT} gives the unimodularization of this new network given in Example~9.6 of~\cite{processes}.
	
\end{example}

Now, the proofs of the results of this section are presented. First, we start with the following lemma.

\begin{lemma}
	\label{lemma:extBias}
	Let $([\bs G, \bs o], S)$ be a proper extension of a unimodular network. If $[\bs G', \bs o']$ is a unimodularization of $[\bs G]$, 
	then $[\bs G, \bs o]$ is obtained by biasing $[\bs G', \bs o']$ by a function $w$ such that almost surely, $w[\bs G',\cdot]$ is constant on $S_{\bs G'}$ (but may depend on $[\bs G']$) and is zero outside $S_{\bs G'}$.
\end{lemma}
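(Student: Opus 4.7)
The plan is to route the comparison from $[\bs G', \bs o']$ to $[\bs G, \bs o]$ through the unique strong unimodularization $\mu^*$ of $[\bs G]$, whose existence is granted by Proposition~\ref{prop:unimodulalrizationUnique} (since $[\bs G', \bs o']$ is by hypothesis already a unimodularization of $[\bs G]$). To get an explicit handle on $\mu^*$, I would invoke Theorem~\ref{thm:uniExtGen} part~\eqref{thm:uniExtGen:Balance} to select a Markovian transport kernel $T:\mathcal G_{**}\to\mathbb R^{\geq 0}$ from $V(\bs G)$ to $S_{\bs G}$ for which $T_{\bs G}^-$ is almost surely constant on $S_{\bs G}$, with value $c([\bs G])$ depending only on $[\bs G]$. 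Under this choice, $M=T_{\bs G}^-(\bs o)=c([\bs G])$ is $I$-measurable, so $\mathbb E[M\mid I]=c([\bs G])$ and Theorem~\ref{thm:extensionT} part~\eqref{thm:extensionT:3} takes the simple form
\[
\mu^*(A)=\mathbb E_\mu\left[\frac{1}{c([\bs G])}\sum_{v\in V(\bs G)} T_{\bs G}(v,\bs o)\,\indic{A}[\bs G,v]\right].
\]

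Next, I would bias $\mu^*$ by the indicator $\indic{o\in S_G}$ (equivalently, condition on $\bs o\in S_{\bs G}$) to obtain a measure $\mu^*_S$ supported on $\{(G,o):o\in S_G\}$, and compare it with $\mu$. The key computation is to apply the proper-extension identity~\eqref{eq:mtpOnS} to the test function $g[G,u,v]:=T_G(v,u)\indic{A}[G,v]/c([G])$: since $\bs o\in S_{\bs G}$ almost surely under $\mu$, $T_G(\bs o,\cdot)$ is supported on $S_{\bs G}$, and $T_G^+(\bs o)=1$, the right-hand side of~\eqref{eq:mtpOnS} collapses to $\mathbb E_\mu\left[\indic{A}[\bs G,\bs o]/c([\bs G])\right]$, while the left-hand side is a fixed multiple of $\mu^*_S(A)$ by the displayed formula above. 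This identifies $\mu^*_S$ as the biasing of $\mu$ by $1/c$; inverting, $\mu$ is the biasing of $\mu^*_S$ by $c$.

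Finally, both $\mu^*$ and $\mu'$ are unimodular, and their non-rooted projections under $\pi$ are mutually absolutely continuous (both are equivalent to the distribution of $[\bs G]$), hence they are weakly unrooted-equivalent. Lemma~\ref{lemma:unimodularNonErgodic} then produces an $I$-measurable, almost surely positive function $q$ for which $\mu'$ equals the biasing of $\mu^*$ by $q$; inverting, $\mu^*$ is the biasing of $\mu'$ by $1/q$. Composing the three biasings $\mu'\to\mu^*\to\mu^*_S\to\mu$ yields that $\mu$ is the biasing of $\mu'$ by
\[
w[G',o']:=\frac{c([G'])}{q([G'])}\,\indic{o'\in S_{G'}},
\]
which vanishes off $S_{\bs G'}$ and on $S_{\bs G'}$ depends only on $[\bs G']$, as required.

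The main obstacle is the mass-transport computation in the second paragraph: the test function has to be crafted so that the two sides of~\eqref{eq:mtpOnS} land exactly on a constant multiple of $\mu^*_S(A)$ and on $\mathbb E_\mu[\indic{A}[\bs G,\bs o]/c([\bs G])]$ respectively. Everything else is bookkeeping with biasings, relying on the elementary facts that composition of biasings by $f$ and $g$ is biasing by $fg$, and that inversion of biasing by an almost surely positive function $f$ is biasing by $1/f$.
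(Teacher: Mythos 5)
Your argument is correct, and the computations (the explicit form of $\mathcal P'_T$ when $T^-$ is constant on $S$, and the mass-transport calculation identifying $\mu^*_S$ with the biasing of $\mu$ by $1/c$) check out. But the route is genuinely different from the paper's, and considerably heavier.

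The paper's proof is a short, abstract argument: since $[\bs G', \bs o']$ is unimodular and $[\bs G,\bs o]$ is weakly unrooted-equivalent to it (by definition of unimodularization), Lemma~\ref{lemma:unimodularBias}(ii) immediately gives an \emph{unknown} bias function $w$; the properties asked for (vanishing off $S_{\bs G'}$, constancy on $S_{\bs G'}$) are then extracted from the proper-extension identity~\eqref{eq:mtpOnS} by adapting the arguments of Lemma~\ref{lemma:happensAtRoot} (for vanishing) and Lemma~\ref{lemma:unimodularBiasUni} (for constancy). You instead \emph{construct} $w$ explicitly by routing through the unique strong unimodularization $\mathcal P'_T$ and composing three biasings. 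Both are valid; yours buys an explicit formula for $w$ at the cost of invoking Theorem~\ref{thm:balancing} (and hence the main shift-coupling Theorem~\ref{thm:shiftCoupling}) and Theorems~\ref{thm:extensionT},~\ref{thm:uniExtGen}, whereas the paper's proof needs only elementary mass-transport arguments.

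Two things you should make explicit. First, in the paper Lemma~\ref{lemma:extBias} is proved before and used inside the proof of Theorem~\ref{thm:extensionT}(i) (the "converse" direction). Since you invoke Theorem~\ref{thm:extensionT}(iii) and Theorem~\ref{thm:uniExtGen}(iii), you must check those pieces do not depend, directly or indirectly, on Lemma~\ref{lemma:extBias}. Tracing through the paper, they do not --- Theorem~\ref{thm:uniExtGen} relies on Theorem~\ref{thm:balancing} and Theorem~\ref{thm:extensionT}(iii) only --- but this should be stated, otherwise the argument reads as potentially circular. Second, the step "both are equivalent to the distribution of $[\bs G]$, hence they are weakly unrooted-equivalent" is a small leap: mutual absolute continuity of $\pi_*\mu^*$ and $\pi_*\mu'$ is precisely the weak equivalence of Definition~\ref{def:unimodularization} with $\mu_0$, and you then need the unnamed Lemma following that definition (or the construction inside Proposition~\ref{prop:unimodulalrizationUnique}, which produces the $I$-measurable bias directly) to pass to Definition~\ref{def:weakStrong} and apply Lemma~\ref{lemma:unimodularNonErgodic}. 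A cleaner version of this last step is simply to bias $\mu'$ by $w'\circ\pi$ where $w'= d\mu_0/d(\pi_*\mu')$; by Lemmas~\ref{lemma:unimodularBiasUni},~\ref{lemma:biasG} and Theorem~\ref{thm:unimodularUnique} the result is $\mu^*$, yielding your $q$ directly.
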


\begin{proof}
	By Lemma~\ref{lemma:unimodularBias}, $[\bs G, \bs o]$ can be obtained by biasing $[\bs G', \bs o']$ by a function, say $w$. Since $\bs o\in S_{\bs G}$ a.s., one obtains that on the event $\bs o'\not\in S_{\bs G'}$, one has $w[\bs G', \bs o']=0$ a.s. Therefore, similar to Lemma~\ref{lemma:happensAtRoot}, \eqref{eq:mtpOnS} implies that $w[\bs G',\cdot]$ is zero outside $S_{\bs G'}$ almost surely. Also, using~\eqref{eq:mtpOnS} and an argument {similar} to Lemma~\ref{lemma:unimodularBiasUni}, one obtains that almost surely, $w[\bs G', \cdot]$ is constant on $S_{\bs G'}$.  We skip repeating the arguments for brevity.
\end{proof}

\begin{proof}[Proof of Lemma~\ref{lemma:uniExt}]
		Existence and uniqueness of a strong unimodularization $[\bs G', \bs o']$ is directly implied by Proposition~\ref{prop:unimodulalrizationUnique}.
		By definition, $[\bs G', \bs o']$ is strongly unrooted-equivalent to $[\bs G, \bs o]$. 
		Therefore, Theorem~\ref{thm:shiftCoupling} implies that $[\bs G' , \bs o']$ can be obtained from $[\bs G, \bs o]$ by a root change. 
\end{proof}

\begin{proof}[Proof of Theorem~\ref{thm:extensionT}] We prove the parts of the theorem in a different order. 
	
	\eqref{thm:extensionT:3} 
	The assumptions imply 
	that almost surely, for some vertex $v\in S_{\bs G}$, $T_{\bs G}^-(v)>0$. {By~\eqref{eq:mtpOnS} and an argument} 
	similar to Lemma~\ref{lemma:happensAtRoot}, one can obtain that {$\probCond{T_{\bs G}^-(\bs o)>0}{I}>0$ a.s., hence, $\omidCond{M}{I}>0$ a.s.} Therefore, $\mathcal P'_T$ is well-defined. It can be seen that $\mathcal P'_T$ is a probability measure.
	Now, let $[\bs{\widetilde G}, \bs{\widetilde o}]$
	be a random rooted network with distribution $\mathcal P'_{T}$.
	Let $g:\mathcal G_{**}\rightarrow\mathbb R^{\geq 0}$
	be a measurable function. By the definition of $\mathcal P'_T$ and conditioning on $I$, one gets
	\begin{eqnarray*}
	&& \omid{\sum_{u\in V({\bs{\widetilde G}})}g[{\bs {\widetilde G}, \bs{\widetilde o}}, u]} \\
	&=& \omid{\frac 1{\omidCond{M}{I}} \omidCond{\sum_{v\in V({\bs G})}\sum_{u\in V({\bs G})} T_{{\bs G}}(v, \bs o)g[{\bs G}, v, u]}{I}}\\
	&=& \omid{\frac 1{\omidCond{M}{I}} \omidCond{\sum_{z\in S_{\bs G}}\sum_{v\in V({\bs G})}\sum_{u\in V({\bs G})} T_{{\bs G}}( v, \bs o)T_{{\bs G}}( u, z)g[{\bs G}, v, u]}{I}}\\
	&=& \omid{\frac 1{\omidCond{M}{I}} \omidCond{\sum_{z\in S_{\bs G}} \hat g[\bs G, \bs o, z]}{I}},
	\end{eqnarray*}
	where 
	$\hat g[G, o, z]:=\sum_{v\in V(G)}\sum_{u\in V(G)} T_{G}(v, o)T_{G}(u,z)g[G, v, u]$
	and in the second equality, the equation $\sum_{z\in S_{\bs G}} T_{\bs G}(u,z)=1$ a.s. is used, which holds by the assumptions. It can be seen that $\hat g$ is well-defined and measurable.
	One obtains a similar equation by swapping the two roots in all instances of 
	$g$ and $\hat g$. To summarize, 
	\begin{equation}
	\label{eq:extension:1}
	\left\{
	\begin{array}{rcl}
	\omid{g^+_{\widetilde{\bs G}}(\widetilde{\bs o})} &=& \omid{\frac 1{\omidCond{M}{I}} \omidCond{\sum_{z\in S_{\bs G}} \hat g[\bs G, \bs o, z]}{I}},\\
	\omid{g^-_{\widetilde{\bs G}}(\widetilde{\bs o})} &=& \omid{\frac 1{\omidCond{M}{I}} \omidCond{\sum_{z\in S_{\bs G}} \hat g[\bs G,z, \bs o]}{I}}.
	\end{array}
	\right.
	\end{equation}

	Let $A\in I$ be an invariant event. By~\eqref{eq:mtpOnS}, one gets
	\[
	\omid{\sum_{z\in S_{\bs G}} \hat g[\bs G, \bs o, z]\identity{A}[\bs G, \bs o]} = \omid{\sum_{z\in S_{\bs G}} \hat g[\bs G, z, \bs o]\identity{A}[\bs G, \bs o]}.
	\]
	Since this holds for all $A\in I$, it follows that the right hand sides of~\eqref{eq:extension:1} are equal. Thus, so are the left hand sides. This implies that~\eqref{eq:unimodular} holds for $[\widetilde{\bs G}, \widetilde{\bs o}]$, hence, $\mathcal P'_T$ is unimodular.
	To show that $\mathcal P'_T$ is a strong unimodularization of $[\bs G]$ 
	(which implies that it is the unique one by Proposition~\ref{prop:unimodulalrizationUnique}), we will prove that $[{\widetilde{\bs G}}]$ has the same distribution as $[\bs G]$. Let $B\in I$. By the definition of $\mathcal P'_T$ and invariance of $B$ under changing root, one gets
	\begin{eqnarray*}
		\myprob{[{\widetilde{\bs G}}, \widetilde{\bs o}]\in B} &=& \omid{\frac 1{\omidCond{M}{I}} \sum_{v\in V({\bs G})} T_{\bs G}(v,\bs o)\identity{B}[{\bs G}, v] }\\
		&=& \omid{ \frac 1{\omidCond{M}{I}} M\identity{B}[{\bs G}, \bs o]}\\
		&=& \omid{\frac 1{\omidCondL{M}{I}} \omidCond{M\identity{B}[{\bs G}, \bs o]}{I}}\\
		&=& \omid{\identity{B}[{\bs G}, \bs o]}\\
		&=& \myprob{[\bs G, \bs o]\in B}.
	\end{eqnarray*}
	Therefore, the distributions of $[\widetilde{\bs G}, \widetilde{\bs o}]$ and $[\bs G, \bs o]$ agree on $I$, which proves the claim.
	
	\eqref{thm:extensionT:2} $\mathcal P_{T}$, whenever defined, is just biasing ${\mathcal P'_{T}}$ by $\omidCond{M}{I}$. Since the bias is $I$-measurable, lemmas~\ref{lemma:unimodularBiasUni} and~\ref{lemma:biasG} imply that $\mathcal P_T$ is unimodular and is unrooted-equivalent to $[\bs G, \bs o]$ (not necessarily strongly). 
	
	\eqref{thm:extensionT:1} If $\omidCond{M}{I}<\infty$, then $\mathcal P'_T$ is well-defined and is a unimodularization of the extension by part~\eqref{thm:extensionT:3} proved above. Conversely, assume $[\bs G', \bs o']$ is a unimodularization of the extension. By definition, the distributions of $[\bs G']$ and $[\bs G]$ are mutually absolutely continuous. Therefore, by the assumptions on $T$, almost surely, for all $v\in V(\bs G')$ one has $T^+_{\bs G'}(v)=1$ and $T_{\bs G'}(v,z)=0$ for all $z\not\in S_{\bs G'}$. Now, $\omid{T_{\bs G'}^+(\bs o')}=1$. So, unimodularity of $[\bs G', \bs o']$ implies that $\omid{T_{\bs G'}^-(\bs o')=1}$. In particular, one obtains that $\omidCond{T_{\bs G'}^-(\bs o')}{I}<\infty$ a.s. 
	Consider the function $w$ in Lemma~\ref{lemma:extBias} and let $c=c([\bs G'])$ be the common value of $w[\bs G', \cdot]$ on the vertices of $S_{\bs G'}$. Lemma~\ref{lemma:extBias} implies that $\omidCond{T_{\bs G}^-(\bs o)}{I}$ is proportional to $\omidCond{w[\bs G', \bs o'] T_{\bs G'}^-(\bs o')}{I}$, which is equal to $c[\bs G'] \omidCond{T_{\bs G'}^-(\bs o')}{I}$ (note that $w[\bs G', \bs o'] T_{\bs G'}^-(\bs o') = c[\bs G'] T_{\bs G'}^-(\bs o')$ a.s.) The latter is shown above to be finite a.s. This implies that $\omidCond{M}{I}<\infty$ a.s.
	
	
\end{proof}

\begin{proof}[Proof of Corollary~\ref{cor:extensionUnique}]
	By the strong sense in part~\eqref{thm:extensionT:3} of Theorem~\ref{thm:extensionT}, one obtains that $\mathcal P'_T$ also satisfies the assumption on $[\bs G, \bs o]$, which implies that $\mathcal P'_T$ is extremal. Therefore, Proposition~\ref{prop:unimodularErgodic} implies that any weak unimodularization of $[\bs G]$ is also strongly unrooted-equivalent to $\mathcal P'_T$. Now, Theorem~\ref{thm:unimodularUnique} implies that $\mathcal P'_T$ is the unique weak unimodularization of $[\bs G]$.
%
%
\end{proof}

\begin{proof}[Proof of Theorem~\ref{thm:uniExtGen}]
	
	\eqref{thm:uniExtGen:T} 
	Let $[\bs G', \bs o']$ be a strong unimodularization of the extension. 
	Define $w:\mathcal G_*\rightarrow\mathbb R^{\geq 0}$ such that 
	$w[\bs G', \bs o']= \identity{\{\bs o'\in S_{\bs G'}\}} / \omidCond{\identity{\{\bs o'\in S_{\bs G'}\}}}{I}.$
	Since $S_{\bs G}\neq\emptyset$ a.s. and this property doesn't depend on the root, the same holds for $\bs G'$. Similar to Lemma~\ref{lemma:happensAtRoot} one can deduce that the above denominator is nonzero a.s. and $w$ is well-defined up to an event of measure zero.
	One has $\omidCond{w[\bs G', \bs o']}{I}=1$ a.s. Therefore, Theorem~\ref{thm:balancing} implies that there is a measurable function $T:\mathcal G_{**}\rightarrow\mathbb R^{\geq 0}$ such that almost surely, for all $v\in V(\bs G')$, one has $T_{\bs G'}^+(v)=1$ and $T_{\bs G'}^-(v)=w[\bs G', v]$. In particular, almost surely, for all $v\in V(\bs G')\setminus S_{\bs G'}$,  one has $T_{\bs G'}^-(v)=0$. Since $[\bs G]$ has the same distribution as $[\bs G']$, the same holds for $[\bs G]$; i.e. almost surely, for all $v\in V(\bs G)$, $T_{\bs G}^+(v)=1$ and if $v\not\in S_{\bs G}$, then $T_{\bs G}^-(v)=0$. So, the assumptions of Theorem~\ref{thm:extensionT} are satisfied by $T$. To show that $\mathcal P'_T$ is defined, it remains to prove that $\omidCond{T_{\bs G}^-(\bs o)}I<\infty$.

	Let $\lambda:(\mathcal G,J)\rightarrow\mathbb R^{\geq 0}$ be the measurable function such that $\lambda[\bs G']= \omidCond{1_{\{\bs o'\in S_{\bs G'}\}}}{I}$.
	According the the above arguments, 
	almost surely, for all $v\in V(\bs G')$, $T_{\bs G'}^-(v)= \frac 1{\lambda[\bs G']} \identity{\{v\in S_{\bs G'}\}}$. Since $[\bs G]$ has the same distribution as $[\bs G']$, the same holds for $[\bs G]$; i.e. almost surely,
	$\forall v\in V(\bs G), T_{\bs G}^-(v)= \frac 1{\lambda[\bs G]} \identity{\{v\in S_{\bs G}\}}.$
	In particular, since $\bs o\in S_{\bs G}$ a.s., one gets $T_{\bs G}^-(\bs o)= \frac 1{\lambda[\bs G]}$ a.s. Since this doesn't depend on the root, $\omidCond{T_{\bs G}^-(\bs o)}{I}= \frac 1{\lambda[\bs G]}<\infty$ a.s. and the claim is proved.

	\eqref{thm:uniExtGen:all}
	Let $[\bs G', \bs o']$ be a unimodularization of $[\bs G]$. Any two unimodularizations are weakly unrooted-equivalent by definition. 
	Therefore, Lemma~\ref{lemma:unimodularNonErgodic} implies that $[\bs G', \bs o']$ can be obtained by biasing $\mathcal P'_T$ by a function that doesn't depend on the root.

	\eqref{thm:uniExtGen:Balance} 
	The function $T$ constructed in the proof of part~\eqref{thm:uniExtGen:T} satisfies the desired property.

\end{proof}

\begin{proof}[Proof of Proposition~\ref{prop:P_T}]
	%
	%
	By Theorem~\ref{thm:uniExtGen}, one can assume $\mathcal P'_T$ is defined for some function $T$ satisfying the assumptions of Theorem~\ref{thm:extensionT} without loss of generality. Let $[\bs G', \bs o']$ have distribution $\mathcal P'_T$ and $[\widetilde{\bs G}, \widetilde{\bs o}]$ be any unimodularization of the extension. By part~\eqref{thm:uniExtGen:all} of Theorem~\ref{thm:uniExtGen}, $[\widetilde{\bs G}, \widetilde{\bs o}]$ is obtained by biasing $[\bs G', \bs o']$ by a measurable function $w:\mathcal G_*\rightarrow\mathbb R^{\geq 0}$ that does not depend on the root. By a scaling, one may assume $\omid{w[\bs G', \bs o']}=1$. For any event $A\subseteq\mathcal G_*$, one has
	\begin{eqnarray*}
		\myprob{\widetilde{\bs o}\in S_{\widetilde{\bs G}}, [\widetilde{\bs G}, \widetilde{\bs o}]\in A} &=& \omid{w[\bs G', \bs o'] \identity{\{\bs o'\in S_{\bs G'}\}}  \identity{A}[\bs G', \bs o']}\\
		&=&	\omid{\frac 1{\omidCond M I}\sum_{v\in S_{\bs G}} T_{\bs G}(v, \bs o) w[\bs G, v]\identity{A}[{\bs G},v]}\\
		&=&	\omid{\frac 1{\omidCond M I}\sum_{v\in S_{\bs G}} T_{\bs G}(\bs o, v) w[\bs G, \bs o]\identity{A}[{\bs G},\bs o]}\\
		&=& \omid{\frac 1{\omidCond M I}w[\bs G, \bs o]\identity{A}[{\bs G},\bs o]},
	\end{eqnarray*}
	where in the third equality \eqref{eq:mtpOnS} is used. 
	Therefore,
	\[
	\probCond{[\widetilde{\bs G}, \widetilde{\bs o}]\in A}{\widetilde{\bs o}\in S_{\widetilde{\bs G}}} =
	c \omid{\frac {w[\bs G, \bs o]}{\omidCond M I}\identity{A}[{\bs G},\bs o]},
	\]
	where $c=1/\myprob{\widetilde{\bs o}\in S_{\widetilde{\bs G}}}$.
	Thus, $[\widetilde{\bs G}, \widetilde{\bs o}]$ has the desired property if and only if $\frac {w[\bs G, \bs o]}{\omidCond M I}$ is essentially constant. If so, the distribution of $[\widetilde{\bs G}, \widetilde{\bs o}]$ is equal to biasing $\mathcal P'_T$ by $\omidCond{M}{I}$, which is just $\mathcal P_T$. As a result, $\mathcal P_T$ is defined and thus $\omid{M}<\infty$. So the claim is proved.
	
	%
	%
\end{proof}

\begin{proof}[Proof of Corollary~\ref{cor:P_T-notDepend}]
	Lemma~\ref{lemma:uniExt} and Proposition~\ref{prop:P_T} imply the claim. 
\end{proof}


\section{A Construction Using Stable Transports}
\label{sec:construction}

In some results in this paper, the existence of specific objects 
are proved based on Theorem~\ref{thm:shiftCoupling}, including propositions~\ref{prop:conditioning} and~\ref{prop:extraHead} and theorems~\ref{thm:balancing} and~\ref{thm:uniExtGen}. However, Theorem~\ref{thm:shiftCoupling} does not help to construct such root-changes by looking only at a realization of the given networks. In this section, we present an algorithm to construct a balancing transport kernel as described in Theorem~\ref{thm:balancing}. Special cases of this algorithms will provide the desired constructions in the other results mentioned above.
The algorithm is based on the one in~\cite{stable}, which is by itself based on~\cite{HoPe06}. It is a generalization of the Gale-Shapley stable matching algorithm for bipartite graphs~\cite{GaSh}. In fact, it is similar to the many-to-many stable matching algorithm. We should note that by the terms \textit{construction} and \textit{algorithm} we do not mean a computational algorithm, but an explicit definition using formulas that might be defined iteratively.

Fix a rooted network $(G, o)$ and measurable functions $w_i:\mathcal G_*\rightarrow\mathbb R^{\geq 0}$ for $i=1,2$. We will use two names \defstyle{sites} and \defstyle{centers} for the vertices and use Roman letters for centers for better readability. Given a measurable function $T:\mathcal G_{**}\rightarrow\mathbb R^{\geq 0}$, we say a site $x\in V(G)$ sends mass $T_G(x,\xi)$ to the center $\xi\in V(G)$. Here is an overview of Algorithm~\ref{alg:gale}. It will finally produce a measurable function $T:\mathcal G_{**}\rightarrow\mathbb R^{\geq 0}$ such that $T_G^+(\cdot) \leq w_2[G, \cdot]$ and $T_G^-(\cdot)\leq w_1[G, \cdot]$ (the goal is equality which will hold under some conditions). $T_G(x,\xi)$ will be defined as the mass $x$ \textit{applies} to $\xi$ minus the mass $\xi$ \textit{rejects} from $x$. The algorithm consists of infinitely many stages and each stage has two steps. At stage $n$, each site $x_0$ \textit{applies} to the closest possible centers with \textit{weight} $A_n(x_0,\cdot)$. A constraint is chosen for the applications, which is $0\leq A_n(x_0,\cdot)\leq w_2[G,\cdot]$. 
Then, each center $\xi_0$ \textit{rejects} some of the weights applied to $\xi_0$ if the sum of the incoming applications exceeds $w_2[G,\xi_0]$. The amount of rejection is denoted by $R_n(\cdot,\xi_0)$. 
The functions $A_n$ and $R_n$ at stage $n$ are chosen such that each site prefers to apply to the closest possible centers and each center prefers to reject (if necessary) the applications of the farthest possible sites.

\begin{algorithm}[Stable Transport]
	\label{alg:gale}
	Let $(G, o)$ be a given deterministic rooted network and $w_1,w_2:\mathcal G_*\rightarrow\mathbb R^{\geq 0}$ be measurable. Let $R_0(x,\xi)=0$ for all $x,\xi\in V(G)$. For each $n\geq 1$, stage $n$ consists of the following two steps.
	\begin{enumerate}[(i)]
		\item For each site $x_0$, define its \textit{application radius} at stage $n$ by
		\[
		a_n(x_0):=\min \{a\geq 0: \sum_{\xi\in N_a(x_0)} (w_2[G,\xi]-R_{n-1}(x_0,\xi)) \geq w_1[G,x_0]  \}.
		\]
		Define \textit{the $n$-th application function} by
		\[
		A_n(x_0,\xi):=
		\left\{ 
		\begin{array}{ll}
		w_2[G,\xi] & d(x_0,\xi)<a_n(x_0),\\
		cR_{n-1}(x_0,\xi)+(1-c) w_2[G,\xi] & d(x_0,\xi)=a_n(x_0),\\
		0 & d(x_0,\xi)>a_n(x_0),
		\end{array}
		\right.
		\]
		where in the case $a_n(x_0)<\infty$, $c=c_n(x_0)$ is chosen in $[0,1]$ such that
		\[
		\sum_{\xi\in V(G)} (A_n[x_0,\xi]-R_{n-1}(x_0,\xi)) = w_1[G,x_0].
		\]
		\item For each center $\xi_0$, define its \textit{rejection radius} at stage $n$ by
		\[
		r_n(\xi_0):=\min\{r\geq 0: \sum_{x\in N_r(\xi_0)} A_n(x,\xi_0) \geq w_2[G,\xi_0] \}.
		\] 
		Define \textit{the $n$-the rejection function} by
		\[
		R_n(x,\xi_0):=
		\left\{ 
		\begin{array}{ll}
		0 & d(x,\xi_0)<r_n(\xi_0),\\
		c'A_n(x,\xi_0) & d(x,\xi_0)=r_n(\xi_0),\\
		A_n(x,\xi_0) & d(x,\xi_0)>r_n(\xi_0),
		\end{array}
		\right.
		\]
		where in the case  $r_n(\xi_0)<\infty$, $c'=c'_n(\xi_0)$ is chosen in $[0,1]$ such that
		\[
		\sum_{x\in V(G)} (A_n[x,\xi_0]-R_{n}(x,\xi_0)) = w_2[G,\xi_0].
		\]
	\end{enumerate}
	Finally, define
	\[
	T_G(x,\xi):=\lim_{n\rightarrow\infty} A_n(x,\xi)-\lim_{n\rightarrow\infty} R_n(x,\xi).
	\]
\end{algorithm}

Here are some basic facts about this algorithm. The proofs are similar to~\cite{stable} and are skipped here for brevity. The sequences of functions $A_n, R_n$ and $a_n$ are non-decreasing w.r.t $n$ and $r_n$ is non-increasing. So, the limit function $T$ is well defined. Moreover, $T_G^+(\cdot)\leq w_1[G,\cdot]$ and $T_G^-(\cdot)\leq w_2[G,\cdot]$. Call a site $x_0$ \defstyle{exhausted} if $T_G^+(x_0)=w_1[G,x_0]$. Similarly, a center $\xi_0$ is \defstyle{sated} if $T_G^-(\xi_0)=w_2[G,\xi_0]$. It is shown below that $T$ is \defstyle{stable} in a sense similar to~\cite{stable} (and many-to-many stable matchings) defined as follows: There is no site $x_0$ and center $\xi_0$ such that both \defstyle{desire} each other, where site $x_0$ desires  center $\xi_0$ if  $T_G(x_0,\xi_0)<w_2[G,\xi_0]$ and either $x_0$ is unexhausted or $T_G(x_0,\xi_1)>0$ for some farther center $\xi_1$. Similarly, center $\xi_0$ desires site $x_0$ if  {$T_G(x_0,\xi_0)<w_2[G,\xi_0]$} and either $\xi_0$ is unsated or $T_G(x_1,\xi_0)>0$ for some farther site $x_1$. Stronger than stability, the following holds. 

\begin{lemma}
	\label{lemma:stable}
	In Algorithm~\ref{alg:gale}, if a site $x_0$ desires a center $\xi_0$ (defined above), then $\xi_0$ is sated and doesn't desire $x_0$. Therefore, $T$ is stable.
\end{lemma}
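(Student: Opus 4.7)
The plan is to fix $x_0$ and $\xi_0$ with $x_0$ desiring $\xi_0$ and split on whether $\xi_0$ has ever rejected positive mass from $x_0$, that is, on the sign of $R_\infty(x_0,\xi_0):=\lim_n R_n(x_0,\xi_0)$. Throughout I will use the monotonicity facts recalled before the lemma ($A_n$, $R_n$ and $a_n$ nondecreasing, $r_n$ nonincreasing), together with the crucial observation that, because graph distances take integer values, the sequences $a_n(x_0)$ and $r_n(\xi_0)$ must eventually stabilize at their limits.

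I first treat the case $R_\infty(x_0,\xi_0)>0$: the formula for $R_n$ gives $r_n(\xi_0)\le d(x_0,\xi_0)$ at some stage and hence $r_\infty(\xi_0)\le d(x_0,\xi_0)<\infty$, and the algorithm's choice of $c'_n(\xi_0)$ when $r_n<\infty$ ensures $\sum_x(A_n-R_n)(x,\xi_0)=w_2[G,\xi_0]$; passing to the limit inside this sum, whose effective support lies in the fixed finite ball $N_{r_\infty}(\xi_0)$ once $r_n$ has stabilized, yields $T^-_G(\xi_0)=w_2[G,\xi_0]$, so $\xi_0$ is sated. I then rule out the complementary case $R_\infty(x_0,\xi_0)=0$: together with the hypothesis $T_G(x_0,\xi_0)<w_2[G,\xi_0]$ it forces $A_n(x_0,\xi_0)<w_2[G,\xi_0]$ at every stage, which by the definition of $A_n$ means $a_n(x_0)\le d(x_0,\xi_0)$. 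If $x_0$ sent positive mass to some farther $\xi_1$, then $A_n(x_0,\xi_1)>0$ at some stage would force $a_n(x_0)\ge d(x_0,\xi_1)>d(x_0,\xi_0)$, a contradiction; and if instead $x_0$ were unexhausted, then the finiteness of every $a_n(x_0)$ would keep the identity $\sum_\xi(A_n-R_{n-1})(x_0,\xi)=w_1[G,x_0]$ valid at every stage, and passage to the limit over the fixed finite ball $N_{d(x_0,\xi_0)}(x_0)$ would yield $T^+_G(x_0)=w_1[G,x_0]$, contradicting unexhaustedness. This rules out the second case, so $\xi_0$ is always sated.

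It remains to show $\xi_0$ does not desire $x_0$, for which, given sated, I only need to rule out that some $x_1$ with $d(x_1,\xi_0)>d(x_0,\xi_0)$ sends positive mass to $\xi_0$. From the first case $r_\infty(\xi_0)\le d(x_0,\xi_0)<d(x_1,\xi_0)$, and the integer-valued nonincreasing sequence $r_n(\xi_0)$ therefore satisfies $r_n(\xi_0)<d(x_1,\xi_0)$ for all sufficiently large $n$; the rejection formula then gives $R_n(x_1,\xi_0)=A_n(x_1,\xi_0)$ for those $n$, and thus $T_G(x_1,\xi_0)=A_\infty(x_1,\xi_0)-R_\infty(x_1,\xi_0)=0$ in the limit. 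Stability of $T$ is an immediate consequence. The main obstacle I anticipate is the careful bookkeeping at the boundary rings $\{d=a_n(x_0)\}$ and $\{d=r_n(\xi_0)\}$, where the coefficients $c_n$, $c'_n$ interpolate between $0$, $w_2$ and $R_{n-1}$; but this is manageable because after $a_n$ and $r_n$ stabilize the supports appearing in the defining identities are fixed finite balls, so termwise pointwise convergence is all that is needed to pass to the limit.
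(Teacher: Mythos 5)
Your proof is correct and reaches the same conclusion via essentially the same mechanism as the paper: both ultimately establish that $R_\infty(x_0,\xi_0)>0$, hence $r_\infty(\xi_0)\le d(x_0,\xi_0)$, from which satedness of $\xi_0$ and full rejection of sites farther than $x_0$ follow. You argue by contrapositive (supposing $R_\infty(x_0,\xi_0)=0$ and showing $x_0$ then cannot desire $\xi_0$) where the paper argues directly (desire forces $x_0$ to apply with full weight to $\xi_0$ at some stage, so rejection must occur), and you additionally spell out the limit passages over fixed finite balls that the paper treats as routine.
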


\begin{proof}
	By definition, either $x_0$ is unexhausted or $T_G(x_0,\xi_1)>0$ for some farther center $\xi_1$. In both cases, $x_0$ has applied to some center farther than $\xi_1$ at some stage. The definition of $A_n$ implies that $A_n(x_0,\xi_0)=w_2[G,\xi_0]$ for large enough $n$. Therefore, by $T_G(x_0,\xi_0)<w_2[G,\xi_0]$, $\xi_0$ has rejected a positive fraction of the application of $x_0$ at some stage. By the definition of the rejection function, $\xi_0$ is sated from that stage on. Moreover, $\xi_0$ has fully rejected the applications of the sites farther than $x_0$. So, $\xi_0$ doesn't desire $x_0$ and the claim is proved.
\end{proof}

\begin{lemma}
	\label{lemma:exhausted}
	In Algorithm~\ref{alg:gale}, if there is an unexhausted site, then all centers are sated and vice versa.
\end{lemma}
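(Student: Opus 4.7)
The plan is to rephrase the statement as: there cannot simultaneously exist an unexhausted site and an unsated center, and then to derive a contradiction from such coexistence using the stability property from Lemma~\ref{lemma:stable}.

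First, I would argue for contradiction: suppose there is an unexhausted site $x_0$ (so $T_G^+(x_0)<w_1[G,x_0]$) and also an unsated center $\xi_0$ (so $T_G^-(\xi_0)<w_2[G,\xi_0]$). The key observation is that for any single site $x$, we trivially have $T_G(x,\xi_0)\leq T_G^-(\xi_0)<w_2[G,\xi_0]$, so in particular $T_G(x_0,\xi_0)<w_2[G,\xi_0]$.

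Next, I would check that the pair $(x_0,\xi_0)$ meets the definition of $x_0$ desiring $\xi_0$: the inequality $T_G(x_0,\xi_0)<w_2[G,\xi_0]$ is the first requirement, and the ``either'' clause is immediately satisfied since $x_0$ is unexhausted by assumption. Hence $x_0$ desires $\xi_0$.

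Finally, I would invoke Lemma~\ref{lemma:stable}: if $x_0$ desires $\xi_0$, then $\xi_0$ must be sated. This directly contradicts the assumption that $\xi_0$ is unsated, completing the proof. There is no real obstacle here once Lemma~\ref{lemma:stable} is available; the entire argument is a one-line application of stability, since the only nontrivial step is verifying that ``unexhausted'' is precisely the condition that turns the strict inequality $T_G(x_0,\xi_0)<w_2[G,\xi_0]$ (which is forced by $\xi_0$ being unsated) into a desire relation, and this is immediate from the definition.
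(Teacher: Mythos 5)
Your proof is correct and follows essentially the same route as the paper's: assume an unexhausted site $x_0$ and an unsated center $\xi_0$ coexist, deduce $T_G(x_0,\xi_0)\leq T_G^-(\xi_0)<w_2[G,\xi_0]$, conclude that $x_0$ desires $\xi_0$, and contradict Lemma~\ref{lemma:stable}. No gap.
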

\begin{proof}
	Assume $x_0$ is an unexhausted site and $\xi_0$ is an unsated center. Since $\xi_0$ is unsated, one obtains $T_G(x_0,\xi_0)\leq T_G^-(\xi_0)<w_2[G,\xi_0]$. Therefore, $x_0$ desires $\xi_0$ by definition. This contradicts Lemma~\ref{lemma:stable}. 
\end{proof}


\begin{theorem}[Construction of a Balancing Transport Kernel]
	\label{thm:stable}
	Let $[\bs G, \bs o]$ be a unimodular network and $w_i:\mathcal G_*\rightarrow \mathbb R^{\geq 0}$ be measurable functions for $i=1,2$. If 
	\[
	\omidCond{w_1[\bs G, \bs o]}{I}=\omidCond{w_2[\bs G, \bs o]}{I}<\infty, \quad a.s.,
	\]
	then the function $T$ constructed in Algorithm~\ref{alg:gale} satisfies the claims of Theorem~\ref{thm:balancing}; i.e. $T_{\bs G}^+(\cdot)=w_1[\bs G, \cdot]$ and $T_{\bs G}^-(\cdot)=w_2[\bs G, \cdot]$ a.s.
\end{theorem}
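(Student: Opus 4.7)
The plan is to combine the inequalities $T_{\bs G}^+(\cdot) \leq w_1[\bs G, \cdot]$ and $T_{\bs G}^-(\cdot) \leq w_2[\bs G, \cdot]$ (which hold pointwise by construction of the algorithm) with the mass transport principle and Lemma~\ref{lemma:exhausted} to rule out unexhausted sites and unsated centers. First note that the outputs of Algorithm~\ref{alg:gale} depend only on $(G,x,\xi)$ and make no reference to a distinguished root, so $T$ is a measurable invariant transport kernel on $\mathcal G_{**}$.

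The key identity is that, applying unimodularity to $g[G,u,v] := T_G(u,v)\identity{A}[G,u]$ for each invariant event $A \in I$ and using that $\identity{A}[G,\cdot]$ is constant along $V(G)$, one obtains
\[
\omidCond{T_{\bs G}^+(\bs o)}{I} = \omidCond{T_{\bs G}^-(\bs o)}{I} \quad \text{almost surely.}
\]
Together with the hypothesis $\omidCond{w_1[\bs G,\bs o]}{I} = \omidCond{w_2[\bs G,\bs o]}{I}$, this identity is what couples the two goals $T^+ = w_1$ and $T^- = w_2$.

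Consider the covariant subset $U_G := \{v \in V(G) : T_G^+(v) < w_1[G,v]\}$ of unexhausted sites; the event $\{U_{\bs G} \neq \emptyset\}$ lies in $I$. Assuming for contradiction that it has positive probability, Lemma~\ref{lemma:exhausted} forces every center to be sated on it, so $T_{\bs G}^-(v) = w_2[\bs G,v]$ for every $v \in V(\bs G)$ on that event. Substituting into the key identity and invoking the hypothesis gives $\omidCond{T_{\bs G}^+(\bs o)}{I} = \omidCond{w_1[\bs G,\bs o]}{I}$ on $\{U_{\bs G} \neq \emptyset\}$. Since $0 \leq w_1[\bs G,\bs o] - T_{\bs G}^+(\bs o)$ and $\omidCond{w_1[\bs G,\bs o]}{I} < \infty$ almost surely, this forces $T_{\bs G}^+(\bs o) = w_1[\bs G,\bs o]$ almost surely on $\{U_{\bs G} \neq \emptyset\}$. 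But Lemma~\ref{lemma:happensAtRoot} applied to $U$ asserts that $\myprob{U_{\bs G} \neq \emptyset} > 0$ if and only if $\myprob{\bs o \in U_{\bs G}} > 0$, contradicting the equality just obtained. Hence $U_{\bs G} = \emptyset$ almost surely, i.e. $T_{\bs G}^+(v) = w_1[\bs G,v]$ for every $v$. The analogous equality $T_{\bs G}^-(v) = w_2[\bs G,v]$ then follows from one more application of the key identity (giving $\omidCond{T^-}{I} = \omidCond{w_2}{I}$), the bound $T^- \leq w_2$, and a second invocation of Lemma~\ref{lemma:happensAtRoot} applied to the covariant subset of unsated centers.

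The only subtle step is the implication ``$X \geq 0$ and $\omidCond{X}{I} = 0$ on an invariant event $B$ imply $X = 0$ a.s.\ on $B$'' when global integrability of $w_1$ is not assumed. This is handled by the standard truncation along the invariant events $A_n := \{\omidCond{w_1[\bs G,\bs o]}{I} \leq n\}$, on which $w_1[\bs G,\bs o]$ is integrable; taking a union over $n$ recovers the full statement. Apart from this routine measure-theoretic point, the argument is a clean combination of the three ingredients singled out above: the mass transport principle, Lemma~\ref{lemma:exhausted}, and Lemma~\ref{lemma:happensAtRoot}.
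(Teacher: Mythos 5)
Your proof is correct and uses the same three ingredients as the paper's: the one‑sided bounds $T^+\le w_1$, $T^-\le w_2$ from the algorithm, Lemma~\ref{lemma:exhausted}, and Lemma~\ref{lemma:happensAtRoot}, combined with the mass transport principle. The one worthwhile difference is bookkeeping: the paper conditions on the invariant event $\{U_{\bs G}\neq\emptyset\}$ and then derives a contradiction from the \emph{unconditional} chain $\omid{T^+}<\omid{w_1}=\omid{w_2}=\omid{T^-}$, which tacitly needs $\omid{w_1}<\infty$ — not part of the hypothesis (only $\omidCond{w_1}{I}<\infty$ a.s.\ is assumed). Your version, which phrases the mass‑transport identity as $\omidCond{T^+}{I}=\omidCond{T^-}{I}$ a.s.\ and deduces $T^+=w_1$ a.s.\ on $\{U_{\bs G}\neq\emptyset\}$ from $\omidCond{w_1-T^+}{I}=0$ there, sidesteps that issue cleanly. (In fact, once you work at the level of $\omidCond{\cdot}{I}$, the closing truncation along $A_n$ is not even needed: for $X\ge 0$ and $B\in I$, the tower property gives $\omid{X\identity B}=\omid{\omidCond X I\identity B}=0$ directly, with no integrability assumption on $w_1$.) So the proof is a sound, slightly tightened rendering of the paper's argument rather than a different route.
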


Note that the condition $\omidCond{w_1[\bs G, \bs o]}{I}=\omidCond{w_2[\bs G, \bs o]}{I}$ a.s. is also necessary (see Theorem~\ref{thm:balancing}).

\begin{proof}
	First, it is easy to see that $T$ defines a measurable function on $\mathcal G_{**}$.	
	We should prove there is no unexhausted site and no unsated center a.s. 
	Let $A$ be the event that there is an unexhausted site. If $\myprob{A}>0$, then by conditioning on $A$, one may assume $\myprob{A}=1$ (notice that $A\in I$ and thus the assumptions are not changed after conditioning on $A$). Therefore, by Lemma~\ref{lemma:exhausted}, there is no unsated centers a.s. Lemma~\ref{lemma:happensAtRoot} implies that $\bs o$ is unexhausted with positive probability but $\bs o$ is sated a.s. Equivalently, $T_{\bs G}^+(\bs o)<w_1[\bs G, \bs o]$ with positive probability but $T_{\bs G}^-(\bs o)=w_2[\bs G, \bs o]$ a.s. It follows that 
	\[\omid{T_{\bs G}^+(\bs o)}<\omid{w_1[\bs G, \bs o]}=\omid{w_2[\bs G, \bs o]} = \omid{T_{\bs G}^-(\bs o)}.\]
	This contradicts~\eqref{eq:unimodular}. Therefore, all sites are exhausted a.s. One can prove similarly that all centers are sated a.s. So, the proof is complete.
\end{proof}

The following is an application of Theorem~\ref{thm:stable} to Proposition~\ref{prop:conditioning} (see also Remark~\ref{rem:constructionOfIntensity} below). Unimodularity is a crucial assumption to ensure that Algorithm~\ref{alg:gale} works here. 
{The author is not aware of any general construction for the non-unimodular case of Proposition~\ref{prop:conditioning}.}

\begin{corollary}
	\label{cor:conditioningConstruction}
	In the setting of Proposition~\ref{prop:conditioning}, assume $[\bs G', \bs o']$ can be obtained from $[\bs G, \bs o]$ by a root-change. If $[\bs G, \bs o]$ is unimodular, then such root-change is obtained by the function $T$ constructed in Algorithm~\ref{alg:gale} for $w_1[G,o]:=1$ and $w_2[G,o]:=\frac 1{\myprob{\bs o\in S_{\bs G}}}\identity{\{o\in S_{G} \}}$.
\end{corollary}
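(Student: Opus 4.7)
The plan is to read off the construction from Theorem~\ref{thm:stable} applied to the specific weights $w_1\equiv 1$ and $w_2[G,o]:=\frac{1}{\myprob{\bs o\in S_{\bs G}}}\identity{\{o\in S_G\}}$, and then verify by a direct mass-transport computation that the root-change induced by the resulting $T$ reproduces the conditioning on $\{\bs o\in S_{\bs G}\}$. First I would check that the intensity condition of Theorem~\ref{thm:stable} holds. Trivially $\omidCond{w_1[\bs G,\bs o]}{I}=1$. By Proposition~\ref{prop:conditioning}, the hypothesis that $[\bs G',\bs o']$ is a root-change of $[\bs G,\bs o]$ forces $\probCond{\bs o\in S_{\bs G}}{I}$ to be essentially constant; integrating this constant recovers $\myprob{\bs o\in S_{\bs G}}$, so $\probCond{\bs o\in S_{\bs G}}{I}=\myprob{\bs o\in S_{\bs G}}$ a.s. Hence $\omidCond{w_2[\bs G,\bs o]}{I}=1=\omidCond{w_1[\bs G,\bs o]}{I}<\infty$ a.s.

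Next, Theorem~\ref{thm:stable} guarantees that the function $T$ produced by Algorithm~\ref{alg:gale} satisfies $T^+_{\bs G}(\cdot)=w_1[\bs G,\cdot]=1$ and $T^-_{\bs G}(\cdot)=w_2[\bs G,\cdot]$ almost surely. In particular $T^+_{\bs G}(\bs o)=1$ a.s., so $T$ is admissible as a root-change kernel in the sense of Definition~\ref{def:rootChange}.

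It then remains to identify the resulting root-change with $[\bs G',\bs o']$. Given any event $A\subseteq\mathcal G_*$, apply the mass transport principle to the transport $g[G,o,v]:=T_G(o,v)\identity{A}[G,v]$. Then $g^+_{\bs G}(\bs o)=\sum_v T_{\bs G}(\bs o,v)\identity{A}[\bs G,v]$, while $g^-_{\bs G}(\bs o)=T^-_{\bs G}(\bs o)\,\identity{A}[\bs G,\bs o]=w_2[\bs G,\bs o]\,\identity{A}[\bs G,\bs o]$ almost surely. Unimodularity equates the two expectations, giving
\[
\omid{\sum_{v\in V(\bs G)}T_{\bs G}(\bs o,v)\identity{A}[\bs G,v]}=\frac{1}{\myprob{\bs o\in S_{\bs G}}}\omid{\identity{\{\bs o\in S_{\bs G}\}}\identity{A}[\bs G,\bs o]}=\probCond{[\bs G,\bs o]\in A}{\bs o\in S_{\bs G}},
\]
which is precisely the distribution of $[\bs G',\bs o']$. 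Thus the root-change by $T$ realizes the desired conditioning.

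No step poses a real obstacle; the only point requiring some care is extracting the constant value of $\probCond{\bs o\in S_{\bs G}}{I}$ from the assumption, since both Theorem~\ref{thm:stable}'s hypothesis and the final identification via $T^-_{\bs G}=w_2$ hinge on the specific normalization $1/\myprob{\bs o\in S_{\bs G}}$ baked into $w_2$.
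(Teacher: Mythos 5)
Your proposal is correct and follows essentially the same approach as the paper: both derive $\probCond{\bs o\in S_{\bs G}}{I}=\myprob{\bs o\in S_{\bs G}}$ from Proposition~\ref{prop:conditioning}, verify the intensity hypothesis of Theorem~\ref{thm:stable}, and identify the resulting root-change with the conditioning. The only cosmetic difference is that you re-derive the needed identification by a direct mass-transport computation, whereas the paper invokes Lemma~\ref{lemma:unimodularBias}, which encapsulates exactly that computation.
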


\begin{proof}
	Let $p:=\myprob{\bs o\in S_{\bs G}}$. By Proposition~\ref{prop:conditioning}, $\probCond{\bs o\in S_{\bs G}}{I}=p$ a.s. It follows that $\omidCond{w_2[\bs G, \bs o]}{I}=1$ a.s.
	Now, Theorem~\ref{thm:stable} implies that $T_{\bs G}^+(\bs o)=1$ and $T_{\bs G}^-(\bs o)=\frac 1p \identity{\{\bs o\in S_{\bs G} \}}$ a.s. Therefore, Lemma~\ref{lemma:unimodularBias} implies that applying the root-change by kernel $T$ to $[\bs G, \bs o]$ gives $[\bs G', \bs o']$. So, the claim is proved.
\end{proof}

By Corollary~\ref{cor:conditioningConstruction} and the proof of Proposition~\ref{prop:extraHead}, the following corollary is readily obtained.

\begin{corollary}[Construction of an Extra Head Scheme]
	In the setting of Proposition~\ref{prop:extraHead}, an extra head scheme is obtained by the function $T$ constructed in Algorithm~\ref{alg:gale} for $w_1[G,o]:=1$ and $w_2[G,o]:=\frac 1p\identity{\{m(o)=1\}}$. 
\end{corollary}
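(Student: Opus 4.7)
The plan is to apply Corollary~\ref{cor:conditioningConstruction} directly. Consider the network obtained from $[\bs G, \bs o]$ by attaching the i.i.d. $\{0,1\}$-marks; this is again a unimodular network, which I will continue to denote $[\bs G, \bs o]$ by slight abuse of notation. Define the covariant subset $S$ by
\[
S_G := \{v \in V(G) : m(v) = 1\}.
\]
Since the marks are i.i.d. Bernoulli($p$) and independent of $[\bs G, \bs o]$, one has $\myprob{\bs o \in S_{\bs G}} = p > 0$, so $S$ satisfies the hypothesis of Proposition~\ref{prop:conditioning}.

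The next step is to recall from the proof of Proposition~\ref{prop:extraHead} that $\probCond{\bs o \in S_{\bs G}}{I} = \probCond{m(\bs o)=1}{I} = p$ a.s., so $\probCond{\bs o\in S_{\bs G}}{I}$ is essentially constant. Hence by Proposition~\ref{prop:conditioning}, the random rooted network $[\bs G', \bs o']$ obtained by conditioning on $\bs o \in S_{\bs G}$ (which is precisely the extra head scheme, i.e.\ $[\bs G, \bs o]$ conditioned on $m(\bs o)=1$) can be obtained from $[\bs G, \bs o]$ by a root-change.

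Finally, I invoke Corollary~\ref{cor:conditioningConstruction} with this $S$. Since $\myprob{\bs o \in S_{\bs G}} = p$, the weight functions prescribed by that corollary become
\[
w_1[G,o] := 1, \qquad w_2[G,o] := \frac{1}{p} \identity{\{o \in S_G\}} = \frac{1}{p} \identity{\{m(o)=1\}},
\]
which are exactly the functions in the statement. Thus the root-change realizing the extra head scheme is furnished by the function $T$ produced by Algorithm~\ref{alg:gale} applied to these weights. There is no real obstacle beyond verifying that the framework of Corollary~\ref{cor:conditioningConstruction} genuinely applies; this is immediate once one recognizes that conditioning on $m(\bs o)=1$ is the same as conditioning on $\bs o\in S_{\bs G}$ and that the sample intensity $\probCond{\bs o\in S_{\bs G}}{I}$ is the constant $p$.
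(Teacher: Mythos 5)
Your argument is exactly the paper's: the paper states that the corollary "is readily obtained" from Corollary~\ref{cor:conditioningConstruction} together with the proof of Proposition~\ref{prop:extraHead}, and you have simply spelled out that derivation with the covariant set $S_G = \{v : m(v)=1\}$ and $\myprob{\bs o\in S_{\bs G}} = p$. The proposal is correct and takes essentially the same route.
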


\begin{corollary}[Construction of a Unimodularization of an Extension]
	\label{cor:consExt}
	Let $([\bs G, \bs o], S)$ be a proper extension of a unimodular network $[\bs G_0, \bs o_0]$. If $[\bs G]$ can be unimodularized, then there exists a constant $\lambda_G$ for each non-rooted network $G$ 
	such that the root-change corresponding to the function $T$ constructed in Algorithm~\ref{alg:gale} for $w_1[G,v]:=1$ and $w_2[G,v]:=\frac 1{\lambda_G} \identity{\{v\in S_G\}}$ satisfies the assumptions of Theorem~\ref{thm:extensionT}. Therefore, $\mathcal P'_T$ is the strong unimodularization of $[\bs G]$. 
\end{corollary}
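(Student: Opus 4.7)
The plan is to identify the right $\lambda$ and right choice of $w_1, w_2$ that make Theorem~\ref{thm:stable} produce the transport kernel used (abstractly) in the proof of Theorem~\ref{thm:uniExtGen}(i), and then transfer the conclusion from the strong unimodularization $[\bs G', \bs o']$ (where Theorem~\ref{thm:stable} actually applies) to the non-unimodular extension $[\bs G, \bs o]$ (where the statement of the corollary demands the conclusion).

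First I will invoke Proposition~\ref{prop:unimodulalrizationUnique} to pick the unique strong unimodularization $[\bs G', \bs o']$ of $[\bs G]$, and define $\lambda$ as the $J$-measurable function on $\mathcal G$ corresponding under the bijection $\pi^{-1}:J\to I$ to $\probCond{\bs o'\in S_{\bs G'}}{I}$. Since $S_{\bs G'}\neq\emptyset$ a.s.\ (because $\bs o\in S_{\bs G}$ a.s.\ and $[\bs G]\stackrel{d}{=}[\bs G']$), an argument like Lemma~\ref{lemma:happensAtRoot} applied in the unimodular $[\bs G', \bs o']$ gives $\lambda_{\bs G'}>0$ a.s., so the same positivity transfers to $[\bs G]$.

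Next I set $w_1[G,v]:=1$ and $w_2[G,v]:=\tfrac{1}{\lambda_G}\identity{\{v\in S_G\}}$ and check the hypothesis of Theorem~\ref{thm:stable} for the \emph{unimodular} $[\bs G', \bs o']$: one has $\omidCond{w_1[\bs G',\bs o']}{I}\equiv 1$, and by pulling the $I$-measurable factor $1/\lambda_{\bs G'}$ out of the conditional expectation, $\omidCond{w_2[\bs G',\bs o']}{I}=\tfrac{1}{\lambda_{\bs G'}}\omidCond{\identity{\{\bs o'\in S_{\bs G'}\}}}{I}=1$ a.s. Theorem~\ref{thm:stable} therefore yields a measurable $T:\mathcal G_{**}\to\mathbb R^{\geq 0}$ with $T_{\bs G'}^+(v)=1$ and $T_{\bs G'}^-(v)=\tfrac{1}{\lambda_{\bs G'}}\identity{\{v\in S_{\bs G'}\}}$ a.s.\ for every $v\in V(\bs G')$.

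Finally I transfer these identities from $[\bs G']$ to $[\bs G]$: both conclusions above are properties of the non-rooted network $\bs G'$ only (they hold ``for all $v$''), so they define an event in $J$, and since $[\bs G]\stackrel{d}{=}[\bs G']$ the same equalities hold for $\bs G$. Thus $T_{\bs G}^+(v)=1$ and, since $T_{\bs G}^-(z)=0$ for $z\notin S_{\bs G}$ forces $T_{\bs G}(v,z)=0$ for all $v$ on that set, the Markovian-kernel assumption of Theorem~\ref{thm:extensionT} is met. Moreover $\bs o\in S_{\bs G}$ a.s.\ gives $M=T_{\bs G}^-(\bs o)=1/\lambda_{\bs G}$, which is $I$-measurable and finite a.s., so $\omidCond{M}{I}=1/\lambda_{\bs G}<\infty$ a.s., and Theorem~\ref{thm:extensionT}(iii) identifies $\mathcal P'_T$ as the strong unimodularization. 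I expect the only delicate point to be the ``transfer'' step: one must be careful that the equalities produced by Theorem~\ref{thm:stable} really define a $J$-measurable event rather than a root-dependent one, which is why I phrase them in the universal form ``for all $v$''.
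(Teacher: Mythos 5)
Your proof is correct and follows essentially the same route as the paper's: choose the strong unimodularization $[\bs G', \bs o']$, set $\lambda_{\bs G'}=\probCond{\bs o'\in S_{\bs G'}}{I}$, apply Theorem~\ref{thm:stable} on the unimodular $[\bs G', \bs o']$, and transfer the resulting a.s.\ identities to $[\bs G, \bs o]$ via equality of distributions on the invariant sigma-field. You simply spell out the transfer step and the verification $\omidCond{M}{I}=1/\lambda_{\bs G}<\infty$ that the paper leaves implicit by referring to the proof of Theorem~\ref{thm:uniExtGen}.
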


\begin{proof}
	Let $[\bs G', \bs o']$ be a strong unimodularization of $[\bs G]$ 
	and let $\lambda$ satisfy $\lambda_{\bs G'}=\probCond{\bs o'\in S_{\bs G'}}{I}$. By Theorem~\ref{thm:stable}, $T_{\bs G'}(\cdot,\cdot)$ balances between $w_1[\bs G', \cdot]$ and $w_2[\bs G', \cdot]$ a.s. as defined in Theorem~\ref{thm:balancing}. 
	Now, the proof of Theorem~\ref{thm:uniExtGen} shows that $T$ satisfies the assumptions of Theorem~\ref{thm:extensionT} and $\mathcal P'_T$ is defined.
\end{proof}

\begin{remark}
	\label{rem:constructionOfIntensity}
	One may ask how to construct $\lambda_{\bs G}$ in the above corollary by looking only at a realization of $[\bs G, \bs o]$.
	If $[\bs G', \bs o']$ is a unimodularization of $[\bs G]$, then $\lambda$ is the sample intensity of $S$ in $\bs G'$ defined in Example~\ref{ex:subnetwork}. 
	One may also ask the same question in Corollary~\ref{cor:conditioningConstruction} on how to construct $\myprob{\bs o\in S_{\bs G}}$.
		Note that averaging on a large ball like $\card{(S_{\bs G}\cap N_r(\bs o))}/\card{N_r(\bs o)}$ and taking limit does not work in general. In fact, it works only for the so called \textit{amenable} unimodular networks~\cite{processes}.
	For general unimodular networks, one construction for the sample intensity can be done by \textit{frequency of visits} to $S_{\bs G}$ of the \textit{delayed simple random walk} in $[\bs G]$ (see~\cite{indistinguish} and~\cite{processes} for the details). Another construction is the following. 
	
	{In Corollary~\ref{cor:consExt}, replace $\lambda_G$ by an arbitrary constant $\lambda>0$.}
	Then, it can be seen that given any $(G,v)$, the value $T_G^+(v)$ 
	is non-increasing in terms of $\lambda$ (see~\cite{thesis} and also~\cite{stable} and~\cite{HoPe06}). Then, one can let $\lambda_G$ be the supremum value of $\lambda$ such that $T_G^+(v)=1$ for all $v\in V(G)$. It can be proved that this construction works in Corollary~\ref{cor:consExt}. The proof is similar to the arguments in~\cite{stable} and~\cite{HoPe06} and is skipped for brevity.
\end{remark}

\section{Proofs}
\label{sec:proofs}
This sections is devoted to the proofs of some of the results of Section~\ref{sec:shiftCoupling}. 

\begin{definition}
	Let $\mu$ be a measure (not necessarily a probability measure)  on $\mathcal G_*$ and $T:\mathcal G_{**}\rightarrow\mathbb R^{\geq 0}$ be a measurable function. 
	Define the measures $T^{\uparrow}\mu$ and $\transport{T}{\mu}$ on $\mathcal G_{**}$ and $\mathcal G_*$ respectively by
	\begin{eqnarray*}
	(T^{\uparrow}\mu)(A) &:=&\int_{\mathcal G_*}\sum_{v\in V(G)}T_G(o,v)\identity{A}[G,o,v]d\mu ([G,o]),\\
	(\transport{T}{\mu})(B) &:=&  \int_{\mathcal G_*}\sum_{v\in V(G)}T_G(o,v)\identity{B}[G,v]d\mu ([G,o]),
	\end{eqnarray*}
	for measurable subsets $A\subseteq\mathcal G_{**}$ and $B\subseteq \mathcal G_*$.
	It can be seen that $\pi_{2*}T^{\uparrow}\mu = \transport{T}{\mu}$.
	Moreover, if $T_G^+(o)=1$ for $\mu$-a.e. $[G,o]$, then $\pi_{1*}T^{\uparrow}\mu = \mu$.
	If in addition, $\mu$ is a probability measure, 
	then $\transport{T}{\mu}$ is just the root-change of $\mu$ by kernel $T$ as in Definition~\ref{def:rootChange}. 
	It is also worthy to mention that when $T^+(\cdot)$ is always $1$, there is a Markov kernel on $\mathcal G_*$ that transports $\mu$ to $\transport{T}{\mu}$, which is defined by $T'([G,o], [G',o']):=\sum_{v\in V(G)}T_G(o,v)\identity{\{[G,v]=[G',o']\}}$.
	
\end{definition}


\begin{proof}[Proof of Proposition~\ref{prop:nonstandard}]
	Assume  $(\mathcal G, J)$ is a standard Borel space. Let $\mu$ be an arbitrary extremal unimodular probability measure on $\mathcal G_*$ and let $\nu:=\pi_*\mu$. Since $\mu$ is extremal, one gets $\nu(A)\in\{0,1\}$ for any event $A\in J$. The assumption of standardness of $(\mathcal G, J)$ implies that $\nu$ is concentrated on one atom. Therefore, there should be a network $G$ such that $\mu$ is concentrated on $\{[G,v]:v\in V(G)\}$. But this is clearly false for general $\mu$ (see for instance the example in Lemma~\ref{lemma:ergodic}).
\end{proof}

\begin{proof}[Proof of Lemma~\ref{lemma:biasG}]
	Denote by $[\bs G', \bs o']$ the random rooted network obtained by biasing the probability measure by $w$.
	Let $B\in J$ be an event in $\mathcal G$ and $A:=\pi^{-1}(B)$. One has
	\begin{eqnarray*}
	\mathbb P\left[[\bs G']\in B\right] &=& \mathbb P\left[[\bs G', \bs o']\in A\right]\\
	&=& \frac 1c \omid{w[\bs G, \bs o]\identity{A}[\bs G, \bs o]}\\
	&=& \frac 1c \omid{\omidCond{w[\bs G, \bs o]}{I} \identity{A}[\bs G, \bs o]}\\
	&=& \frac 1c \omid{\omidCond{w[\bs G, \bs o]}{I}\identity{B}[\bs G]},
	\end{eqnarray*}
	where $c:=\omid{w[\bs G, \bs o]}$ and in the third equation we have used the fact $A\in I$. Now, the claim is obtained by noting that $\omidCond{w[\bs G, \bs o]}{I}$ is a function of $[\bs G]$.
\end{proof}

\begin{lemma}
	\label{lemma:implicationsCDR}
	Conditions~\ref{conditionR}, \ref{conditionC} and~\ref{conditionD} in Theorem~\ref{thm:implications} are equivalent.
\end{lemma}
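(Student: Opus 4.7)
The plan is to establish the cycle \ref{conditionR}$\Rightarrow$\ref{conditionD}$\Rightarrow$\ref{conditionC}$\Rightarrow$\ref{conditionD}$\Rightarrow$\ref{conditionR}, which isolates four small arrows. The implications \ref{conditionR}$\Rightarrow$\ref{conditionD} and \ref{conditionD}$\Rightarrow$\ref{conditionC} are the immediate ones: given a kernel $T$ witnessing \ref{conditionR}, let $[\bs G,\bs o]:=[\bs G_1,\bs o_1]$ and, conditional on $[\bs G,\bs o]$, sample $\bs o'\in V(\bs G)$ with distribution $T_{\bs G}(\bs o,\cdot)$; by \eqref{eq:rootChange} the marginal of $[\bs G,\bs o']$ is the distribution of $[\bs G_2,\bs o_2]$. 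Conversely, given a doubly-rooted witness, the pair $([\bs G,\bs o],[\bs G,\bs o'])$ is a coupling automatically supported on $\{[G_1]=[G_2]\}$, since the two rooted networks share the same underlying network.

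For \ref{conditionC}$\Rightarrow$\ref{conditionD}, I would start from a coupling supported on $\{([G_1,o_1],[G_2,o_2]):[G_1]=[G_2]\}$. Fixing a measurable selection of representatives in each class, on the event $[\bs G_1]=[\bs G_2]$ the set $U:=\{v\in V(\bs G_1):[\bs G_1,v]=[\bs G_2,\bs o_2]\}$ is nonempty; conditional on the coupling, choose $\bs o'$ uniformly in $U$. Then $[\bs G_1,\bs o_1,\bs o']\in\mathcal G_{**}$ and the equality $[\bs G_1,\bs o']=[\bs G_2,\bs o_2]$ holds by construction, so the marginals match those required in \ref{conditionD}.

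For \ref{conditionD}$\Rightarrow$\ref{conditionR}, disintegrate the distribution of $[\bs G,\bs o,\bs o']$ over that of $[\bs G,\bs o]$ to obtain, for each $[G,o]\in\mathcal G_*$, a probability measure $\kappa([G,o],\cdot)$ on the fiber $\pi_1^{-1}([G,o])=\{[G,o,v]:v\in V(G)\}$. Setting
\[
T_G(o,v):=\frac{\kappa\!\left([G,o],\{[G,o,v]\}\right)}{n(G,o,v)},\qquad n(G,o,v):=\card{\{v'\in V(G):[G,o,v']=[G,o,v]\}},
\]
both numerator and denominator are invariant under rooted isomorphisms of $(G,o,v)$, so $T$ is a well-defined measurable function on $\mathcal G_{**}$. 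Grouping vertices of $V(G)$ by their isomorphism class and using that $T_G(o,\cdot)$ is constant on each such group gives $T_{\bs G}^+(\bs o)=1$ a.s. and, for any event $A\subseteq\mathcal G_*$, $\omid{\sum_v T_{\bs G}(\bs o,v)\identity{A}[\bs G,v]}=\omid{\identity{A}[\bs G,\bs o']}=\myprob{[\bs G_2,\bs o_2]\in A}$, which is precisely the root-change condition.

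The main obstacles are measurability issues stemming from working modulo isomorphism: selecting a compatible $\bs o'$ in \ref{conditionC}$\Rightarrow$\ref{conditionD} requires pinning down a measurable representative of $\bs G_1$, and dividing by $n(G,o,v)$ in \ref{conditionD}$\Rightarrow$\ref{conditionR} forces one to verify that the quotient is a well-defined measurable function on $\mathcal G_{**}$. Both are standard once one uses that $\mathcal G_*$ and $\mathcal G_{**}$ are standard Borel and that the counting function $n$ is a Borel invariant, so the main ideas above suffice.
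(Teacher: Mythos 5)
Your steps \ref{conditionR}$\Rightarrow$\ref{conditionD} and \ref{conditionD}$\Rightarrow$\ref{conditionC} match the paper, and your \ref{conditionD}$\Rightarrow$\ref{conditionR} step is a sound (and somewhat different) route: the paper goes \ref{conditionC}$\Rightarrow$\ref{conditionR} directly, grouping vertices of $V(G_1)$ by the singly-rooted class $[G_1,v]$ and splitting mass over the finitely many \emph{closest} vertices in each group, while you disintegrate over the first marginal and divide by the orbit size $n(G,o,v)$ under $\mathrm{Aut}(G,o)$; this is finite because any $v'$ with $[G,o,v']=[G,o,v]$ satisfies $d(o,v')=d(o,v)$ and lies in the finite sphere $\partial N_{d(o,v)}(o)$, so your construction is well-posed.

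The genuine gap is in \ref{conditionC}$\Rightarrow$\ref{conditionD}. The set $U:=\{v\in V(\bs G_1):[\bs G_1,v]=[\bs G_2,\bs o_2]\}$ is a union of orbits under automorphisms of the \emph{unrooted} $\bs G_1$ (not of $(\bs G_1,\bs o_1)$), and these can live at arbitrary distances from $\bs o_1$; it is therefore in general infinite, so ``choose $\bs o'$ uniformly in $U$'' is undefined. The simplest example is $\bs G_1=\bs G_2=\mathbb Z$ with all vertices indistinguishable, for which $U=V(\bs G_1)$. The fix is exactly the device the paper uses in its \ref{conditionC}$\Rightarrow$\ref{conditionR} proof: restrict to the set of vertices in $U$ at minimal graph-distance from $\bs o_1$, which is nonempty (because $[\bs G_1]=[\bs G_2]$) and finite (local finiteness), and choose $\bs o'$ uniformly there; any $\bs o'\in U$ yields $[\bs G_1,\bs o']=[\bs G_2,\bs o_2]$, so the marginal is unaffected. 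Also, the phrase ``fixing a measurable selection of representatives in each class'' is misleading: Proposition~\ref{prop:nonstandard} shows the $\pi$-equivalence relation is not smooth, so no such selector exists; fortunately you do not need one, since $\mathcal G_*$ already consists of isomorphism classes and every construction you use is invariant under rooted isomorphism.

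Finally, a small point of economy: your plan \ref{conditionR}$\Rightarrow$\ref{conditionD}$\Rightarrow$\ref{conditionC}$\Rightarrow$\ref{conditionD}$\Rightarrow$\ref{conditionR} proves four arrows; the cycle \ref{conditionR}$\Rightarrow$\ref{conditionD}$\Rightarrow$\ref{conditionC}$\Rightarrow$\ref{conditionR} with only three would suffice, which is what the paper does.
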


\begin{proof} 
	Let $\mathcal P_1$ and $\mathcal P_2$ be the distributions of $[\bs G_1,\bs o_1]$ and $[\bs G_2, \bs o_2]$ respectively.
	
	\ref{conditionR}$\Rightarrow$ \ref{conditionD}.
	Consider a root-change by kernel $T$ such that $\transport{T}{\mathcal P_1}=\mathcal P_2$. Given $[\bs G_1, \bs o_1]$, choose a second root with distribution $T_{\bs G_1}(\bs o_1,\cdot)$. To be more precise, a random doubly-rooted network with distribution $T^{\uparrow}\mathcal P_1$ (Definition~\ref{def:rootChange}) has the desired properties.

	\ref{conditionD}$\Rightarrow$ \ref{conditionC}.
	Let $[\bs G, \bs o, \bs o']$ be such a random doubly-rooted network. Then, the random rooted networks $[\bs G, \bs o]$ and $[\bs G, \bs o']$ provide the desired coupling. More precisely, the desired coupling is obtained by pushing forward the distribution of $[\bs G, \bs o, \bs o']$ by the map $[G,o,o']\mapsto ([G,o], [G,o'])$, which is a well defined measurable function on $\mathcal G_{**}$.
	
	\ref{conditionC}$\Rightarrow$ \ref{conditionR}.
	Let $\mu$ be such a probability measure on $\mathcal G_{*}\times\mathcal G_*$ as assumed. Fix a rooted network $(G_1,o_1)$ and let $\mu_{(G_1,o_1)}$ be the conditional distribution of the second rooted network given that the first rooted network is $[G_1,o_1]$. Note that $\mu_{(G_1,o_1)}$ is defined and is supported on 
	$A:=\{[G_1,v]: v\in V(G_1)\}$
	for $\mathcal P_1$-a.e. $[G_1,o_1]$. In the (zero-probability) cases where this doesn't hold, let $\mu_{(G_1,o_1)}$ be concentrated on $[G_1,o_1]$. Note that $A$ is a countable set. 
	
	For $[G_2,o_2]\in A$, let $S_{[G_2,o_2]}\subseteq V(G_1)$ be the set of the closes vertices $v$ to $o_1$ such that $[G_1,v]=[G_2,o_2]$. This set is a finite subset of $V(G_1)$. Finally, from the measure $\mu_{(G_1,o_1)}$ on $A$, one can construct a measure $T_{G_1}(o_1,\cdot)$ on $V(G_1)$ defined by 
	\[
	T_{G_1}(o_1,v):=\sum_{[G_2,v_2]\in A}\frac 1{\card{S_{[G_2,v_2]}}} \mu_{(G_1,o_1)}([G_2,v_2]) \identity{S_{[G_2,v_2]}}(v) 
	\]
	for $v\in V(G_1)$.
	It is easy to see that $T$ is an invariant transport kernel and $T_{G_1}^+(o_1)=1$. Moreover, by choosing a new root in $V(G_1)$ with distribution $T_{G_1}(o_1,\cdot)$, the resulting network has distribution $\mu_{(G_1,o_1)}$. By the definition of $\mu_{(G_1,o_1)}$ and choosing $(G_1,o_1)$ randomly with distribution $\mathcal P_1$, one gets that 
	$\transport{T}{\mathcal P_1}=\mathcal P_2$. So, $T$ gives the desired root-change.
	
\end{proof}

\begin{proof}[Proof of Lemma~\ref{lemma:rootChange}]
	By part~\ref{conditionR}$\Rightarrow$\ref{conditionC} of Theorem~\ref{thm:implications} (proved in Lemma~\ref{lemma:implicationsCDR} above), there is a coupling of $[\bs G, \bs o]$ and $[\bs G, \bs o']$ supported on $\{([G,o],[G',o']):[G]=[G']\}\subseteq\mathcal G_*\times\mathcal G_*$. The latter also holds if one swaps $[\bs G, \bs o]$ and $[\bs G', \bs o]$. Therefore, by part~\ref{conditionC}$\Rightarrow$\ref{conditionR} of Theorem~\ref{thm:implications} (proved in Lemma~\ref{lemma:implicationsCDR} above), $[\bs G, \bs o]$ is a root-change of $[\bs G', \bs o']$.

\end{proof}

\begin{proof}[Proof of Theorem~\ref{thm:shiftCoupling}]
	Let $P_1$ and $P'_1$ be the distributions of $[\bs G,\bs o]$ and $[\bs G', \bs o']$ respectively. 
	
	($\Rightarrow$). Assume $[\bs G',\bs o']$ can be obtained from $[\bs G, \bs o]$ by a root-change. Equation~\eqref{eq:rootChange} easily implies that $\myprob{[\bs G', \bs o']\in A}= \myprob{[\bs G, \bs o]\in A}$ for any invariant event $A$. Equivalently, $\mathcal P[A]=\mathcal P'[A]$ for any $A\in I$, which is the desired property.
	
	($\Leftarrow$, \textit{First Proof}). 
	The proof mimics that of~\cite{Th96}. Here is a summary of the proof. The idea is to find two root-changes for $[\bs G_1, \bs o_1]$ and $[\bs G_2, \bs o_2]$ such that the resulting random rooted networks have the same distribution. Then, one can combine them to find the desired root-change. We start with an arbitrary root-change such that every vertex has positive probability to be chosen. Then, update it step by step as will be described. However, in the next steps probability measures will be replaced by finite measures.
	
	For a network $G$ and $o,v\in V(G)$, let 
	\[
	S[G,o,v]:=\left\{
	\begin{array}{ll}
	\frac 1{\card{V(G)}},& \card{V(G)}<\infty\\
	\frac 1{2^{r+1}\card{\partial N_r(o)}}, & \card{V(G)}=\infty
	\end{array}
	\right.
	\]
	where $r:=d(o,v)$ and $\partial N_r(o)$ is the (internal) boundary of the ball; i.e. the set of vertices with distance $r$ from $o$. It can be seen that $S$ is well-defined and measurable on $\mathcal G_{**}$. All we need from $S$ is the following property: For every network $G$ and $o,v\in V(G)$,
	\begin{eqnarray}
	\label{eq:shiftCoupling-T}
	\left\{
	\begin{array}{l}
	S_G^+(o)=1,\\
	S_G(o,v)>0.
	\end{array}
	\right.
	\end{eqnarray}
	
	Starting from $P_1$ and $P'_1$, construct the sequences of finite measures $Q_n,Q'_n, P_n,P'_n$ and $\lambda_n$ for $n\geq 1$ as follows. The first two are one $\mathcal G_{**}$ and the other three on $\mathcal G_*$. Here, the symbol $\wedge$ is used for the minimum of measures. 
	\begin{itemize}
		\item  $\lambda_n:=(\transport{S}{P_n})\wedge (\transport{S}{P'_n})$.
		\item $Q_n$ is the probability measure constructed in Lemma~\ref{lem:thorisson} for $\lambda_n$, $S^{\uparrow}P_n$ and $i=2$. $Q'_n$ is defined similarly by Lemma~\ref{lem:thorisson} for $\lambda_n$, $S^{\uparrow}P'_n$ and $i=2$.
		\item $P_{n+1}:=P_n-\pi_{1*}Q_n$ and $P'_{n+1}:=P'_n-\pi_{1*}Q'_n$.
	\end{itemize}
	
	By the definition of $Q_n$ and $Q'_n$ and Lemma~\ref{lem:thorisson}, one has
	\begin{eqnarray}
	\label{eq:shiftCoupling-pi2Qn}
	\left\{
	\begin{array}{rcl}
	Q_n\leq S^{\uparrow}P_n,&& \pi_{2*}Q_n=\lambda_n,\\
	Q'_n\leq S^{\uparrow}P'_n,&& \pi_{2*}Q'_n=\lambda_n.
	\end{array}
	\right.
	\end{eqnarray}
	
	Since $\pi_{1*}S^{\uparrow}P_n=P_n$ and $\pi_{1*}S^{\uparrow}P'_n=P'_n$, this implies inductively that all above measures are non-negative (and justifies validity of using Lemma~\ref{lem:thorisson} inductively).
	Define
	\begin{eqnarray*}
		P_{\infty}:=\lim_n P_n, && Q:=\sum_{n=1}^{\infty}Q_n,\\
		P'_{\infty}:=\lim_n P'_n, && Q':=\sum_{n=1}^{\infty}Q'_n.
	\end{eqnarray*}
	
	The limits are well defined since $P_n$ and $P'_n$ are decreasing sequences and the sums of $\|Q_n\|$ and $\|Q'_n\|$ over $n$ are convergent, where the symbol $\|\cdot\|$ is used for the total mass of a measure (note that $\|Q_n\|=\|P_n\|-\|P_{n+1}\|$). This also shows that $Q$ is a finite measure.
	Now, one has
	\begin{eqnarray}
	\label{eq:shiftCoupling-pi1Q}
	\left\{
	\begin{array}{lcl}
		\pi_{1*}Q &= \sum_n (P_n-P_{n+1})=& P_1-P_{\infty},\\
		\pi_{1*}Q' &= \sum_n (P'_n-P'_{n+1})=& P'_1-P'_{\infty}
	\end{array}
	\right.
	\end{eqnarray}
	and
	\begin{equation}
	\label{eq:shiftCoupling-pi2QQ'}
	\pi_{2*}Q = \pi_{2*}Q'.
	\end{equation}
	
	Let $\lambda_{\infty}:=(\transport{S}{P_{\infty}})\wedge (\transport{S}{P'_{\infty}})$.
	By $P_{\infty}\leq P_n$ and $P'_{\infty}\leq P'_n$, it is clear that $\lambda_{\infty}\leq \lambda_n$. Therefore, by~\eqref{eq:shiftCoupling-pi2Qn} we get
	$
	\|\lambda_{\infty}\|\leq \|\lambda_n\|=\|Q_n\|
	$
	for every $n$. The sum of the right hand side over $n$ is convergent (bounded by $\|Q\|$) and so $\|\lambda_{\infty}\|=0$. Therefore, $\lambda_{\infty}=0$. This means that the measures $\transport{S}{P_{\infty}}$ and $\transport{S}{P'_{\infty}}$ are mutually singular; i.e. there is an event $A\subseteq \mathcal G_*$ such that
	\begin{eqnarray}
	\label{eq:shiftCoupling-TPinfty}
	\left\{
	\begin{array}{lcl}
	\transport{S}{P_{\infty}}(A^c) &=& 0,\\
	\transport{S}{P'_{\infty}}(A) &=& 0,
	\end{array}
	\right.
	\end{eqnarray}
	
	Consider the event $B:=\{[G,o]:\exists v\in V(G): [G,v]\in A \}=\pi^{-1}(\pi(A))$ in $\mathcal G_*$. By~\eqref{eq:shiftCoupling-TPinfty}, \eqref{eq:shiftCoupling-T} and the definition of $\transport{S}{P_{\infty}}$ and $\transport{S}{P'_{\infty}}$, one gets
	\begin{equation}
	\label{eq:shiftCoupling-PinftyB}
	\left\{
	\begin{array}{lcl}
	P_{\infty}(B^c) &=& 0,\\
	P'_{\infty}(B) &=& 0.
	\end{array}
	\right.
	\end{equation}
	
	By part~\ref{conditionD}$\Rightarrow$\ref{conditionF} of Theorem~\ref{thm:implications} (proved in Lemma~\ref{lemma:implicationsCDR} above), the measures $\pi_{1*}Q$ and $\pi_{2*}Q$ agree on $I$. The same holds for $Q'$ (and any arbitrary measure on $\mathcal G_{**}$). Therefore,~\eqref{eq:shiftCoupling-pi2QQ'} gives that $\pi_{1*}Q$ and $\pi_{1*}Q'$ agree on $I$. By~\eqref{eq:shiftCoupling-pi1Q} and the assumption that $P_1$ and $P'_1$ agree on $I$, one gets that $P_{\infty}$ and $P'_{\infty}$ also agree on $I$. Since $B$ is clearly an invariant event, one obtains $P_{\infty}(B)=P'_{\infty}(B)$. Now, \eqref{eq:shiftCoupling-PinftyB} readily implies
	$
	P_{\infty}=P'_{\infty}=0
	$.
	Now, one has
	\begin{equation*}
	\begin{array}{lcl}
	\pi_{1*}Q&=&P_1,\\
	\pi_{1*}Q'&=&P'_1,\\
	\pi_{2*}Q&=&\pi_{2*}Q'.
	\end{array}
	\end{equation*}
	
	As a result, $Q$ and $Q'$ are probability measures. By part \ref{conditionD}$\Rightarrow$\ref{conditionR} of Theorem~\ref{thm:implications} (proved in Lemma~\ref{lemma:implicationsCDR} above), one finds a root-change, say by kernel $T$, that transports $P_1$ to $\alpha:=\pi_{2*}Q=\pi_{2*}Q'$. 
	Similarly, by the same argument and Lemma~\ref{lemma:rootChange}, one finds a root-change, say by kernel $T'$, that transports $\alpha$ to $\mathcal P_2$. Now, let $t$ be the composition of $T$ and $T'$ defined by
	$
	t_G(o,v):= \sum_{z\in V(G)} T_G(o,z)T'_G(z,v).
	$
	It can be seen that $t$ gives a root-change of $P_1$ (as in Definition~\ref{def:rootChange}) and $\transport{t}{P_1}=P'_1$. Therefore, by Definition~\ref{def:rootChange}, $[\bs G', \bs o']$ can be obtained from $[\bs G, \bs o]$ by  the root-change by kernel $t$, which completes the proof.

		($\Leftarrow$, \textit{Second Proof}). Let $R$ be the equivalence relation on $\mathcal G_*$ in which $[G_1, o_1]$ is $R$-related to $[G_2,o_2]$ if and only if $[G_1]=[G_2]$. Following the definitions in Subsection~\ref{subsec:shiftCoupling:notes}, it can be seen that $R$ is a countable Borel equivalence relation. 
		Therefore, by Theorem~1 of~\cite{FeMo77}, there is a countable group $H$ consisting of Borel isomorphisms of $\mathcal G_*$ that generates $R$ in the sense that
		\[
		xRy\Leftrightarrow \exists h\in H: y=h(x).
		\]
		Endow $H$ with the discrete topology. It can be seen that the invariant sigma-field under the action of $H$ is equal to the invariant sigma-field $I$ in Definition~\ref{def:J}. So, the assumption gives that the distributions of $[\bs G_1, \bs o_1]$ and $[\bs G_2, \bs o_2]$ agree on the $H$-invariant sigma-field. Thus, by Theorem~1 of~\cite{Th96}, there is a random element $F$ of $H$ such that $F[\bs G_1,\bs o_1]$ has the same distribution as $[\bs G_2, \bs o_2]$. This provides a coupling of $\mathcal P$ and $\mathcal P'$ that satisfied Condition~\ref{conditionC} of Definition~\ref{def:same}. Therefore, by part \ref{conditionC}$\Rightarrow$\ref{conditionR} of Theorem~\ref{thm:implications} (proved in Lemma~\ref{lemma:implicationsCDR} above), $[\bs G_2, \bs o_2]$ can be obtained from $[\bs G_1, \bs o_1]$ by a root-change and the claim is proved.
\end{proof}

The following lemma 
is used in the proof of Theorem~\ref{thm:shiftCoupling} above.
Note that all measures are assumed to be non-signed in this paper.
\begin{lemma}
	\label{lem:thorisson}
	Let $P$ and $Q$ be finite measures on $\mathcal G_*$ and $\mathcal G_{**}$ respectively and $i\in \{1,2\}$. If $\pi_{i*}Q\geq P$, then there is a measure $Q'\leq Q$ such that $\pi_{i*}Q'= P$.
\end{lemma}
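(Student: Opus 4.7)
The plan is to construct $Q'$ by reweighting $Q$ through a measurable density lifted from the base space $\mathcal{G}_*$. Concretely, the hypothesis $P \leq \pi_{i*}Q$ means that $P$ is absolutely continuous with respect to $\pi_{i*}Q$, so the Radon–Nikodym theorem furnishes a measurable function $f:\mathcal{G}_*\to[0,1]$ with $f = dP/d(\pi_{i*}Q)$, where the bound $f\leq 1$ follows from $P\leq \pi_{i*}Q$ (after modifying on a $\pi_{i*}Q$-null set if necessary).

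Next, I would define $Q'$ on $\mathcal{G}_{**}$ by $dQ'/dQ := f\circ \pi_i$, i.e.\
\[
Q'(B) := \int_{B} f\circ \pi_i \, dQ
\]
for measurable $B\subseteq \mathcal{G}_{**}$. Since $f\circ \pi_i$ is measurable (as a composition of measurable maps) and takes values in $[0,1]$, this is a well defined finite measure with $Q'\leq Q$.

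It remains to verify $\pi_{i*}Q' = P$. For any measurable $A\subseteq \mathcal{G}_*$, using the change-of-variables formula for pushforwards,
\[
\pi_{i*}Q'(A) = Q'(\pi_i^{-1}(A)) = \int \mathbf{1}_{A}\circ \pi_i \cdot (f\circ \pi_i) \, dQ = \int (\mathbf{1}_A\cdot f)\circ \pi_i \, dQ = \int_{A} f \, d(\pi_{i*}Q) = P(A),
\]
where the last equality is the definition of $f$ as the Radon–Nikodym derivative.

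There is no real obstacle here: the only point to be careful about is that $f$ can be chosen pointwise in $[0,1]$ (this is standard, as any version of the Radon–Nikodym derivative can be truncated at $1$ without changing $P$, since the set $\{f>1\}$ has $\pi_{i*}Q$-measure zero), and that $f\circ \pi_i$ is measurable, which is immediate from the measurability of $\pi_i:\mathcal{G}_{**}\to\mathcal{G}_*$ noted in Section~\ref{sec:basic}.
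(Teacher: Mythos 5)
Your proof is correct. The paper itself does not give an argument: it simply invokes Lemma~1 of Thorisson (1996) as a black box. Your Radon--Nikodym construction is a self-contained, elementary alternative. Concretely, you observe that $P\leq\pi_{i*}Q$ gives both absolute continuity $P\ll\pi_{i*}Q$ and a density $f=dP/d(\pi_{i*}Q)$ bounded by $1$ (after truncation on a null set), and then you pull this density back along $\pi_i$ to reweight $Q$. The verification $\pi_{i*}Q'=P$ via the change-of-variables identity $\int (g\circ\pi_i)\,dQ=\int g\,d(\pi_{i*}Q)$ is exactly right. What your approach buys over the paper's is transparency and self-containment --- the reader sees explicitly how $Q'$ is built rather than having to look up an external lemma; what the paper's one-line citation buys is brevity and the assurance that the statement sits inside a more general framework developed by Thorisson, which the paper also relies on elsewhere (in particular in the second proof of Theorem~\ref{thm:shiftCoupling}). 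Both are valid; yours is arguably preferable for a reader who wants a complete argument on the page.
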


\begin{proof}
	The claim is a direct consequence of Lemma~1 in~\cite{Th96}. 
\end{proof}

\begin{proof}[Proof of Theorem~\ref{thm:implications}]
	According to Lemma~\ref{lemma:implicationsCDR} and Theorem~\ref{thm:shiftCoupling} proved above, the only remaining part is 
	\ref{conditionR}$\Rightarrow$ \ref{conditionB}, which is trivial.
%
%
%
\end{proof}

\begin{proof}[Proof of Lemma~\ref{lemma:ergodic}]
	By the natural coupling of the two random rooted networks, one may assume $V(\bs G)=V(\bs G')$ and $\bs o=\bs o'$.
	First, assume~\eqref{eq:ergodic:1} is proved and $[\bs G, \bs o]$ is extremal and infinite a.s. The left hand side of~\eqref{eq:ergodic:1} is an invariant function of $[\bs G, \bs o]$. Therefore, by extremality, it is essentially constant, hence, by~\eqref{eq:ergodic:1}, it is either 1 a.s. or 0 a.s. It follows that $\myprob{[\bs G', \bs o]\in A}\in \{0,1\}$ and so $[\bs G', \bs o]$ is extremal. So, it is enough to prove~\eqref{eq:ergodic:1}. Let
	\[
	f(G,o):=\probCond{[\bs G',\bs o]\in A}{[\bs G, \bs o]=[G, o]}.
	\]
	
	For any measurable function $g:\mathcal G_{**}\rightarrow\mathbb R^{\geq 0}$,
	\begin{eqnarray*}
		\omid{f[\bs G, \bs o]\sum_{v\in V({\bs G})}g_{\bs G}(\bs o, v)} &=& \omid{f[\bs G, \bs o] g_{\bs G}^+(\bs o)}\\
		&=& \omid{\identity{A}[\bs G', \bs o] g_{\bs G}^+(\bs o)}\\
		&=& \omid{\identity{A}[\bs G', \bs o] g_{\bs G}^-(\bs o)}\\
		&=& \omid{f[\bs G, \bs o] g_{\bs G}^-(\bs o)}\\
		&=& \omid{\sum_{v\in V(\bs G)}f[\bs G, v]g_{\bs G}(\bs o, v)},
	\end{eqnarray*}
	were in the second and forth equations, conditioning on $[\bs G, \bs o]$ is used, in the third one unimodularity of $[\bs G', \bs o']$ and $A\in I$ are used and in the last equation, unimodularity of $[\bs G, \bs o]$ is used.
	Therefore
	$
	\omid{\sum_{v\in V({\bs G})}(f[\bs G, \bs o]-f[\bs G, v])g_{\bs G}(\bs o, v)}=0.
	$
	By substituting $g_{\bs G}(\bs o, v)$  with the positive and negative parts of $f[\bs G, \bs o]-f[\bs G, v]$ separately, one obtains 
	\begin{equation}
	\label{eq:ergodic:2}
	\forall v\in V(\bs G), f[\bs G, v]=f[\bs G, \bs o], \quad a.s.
	\end{equation}
	In other words, $f$ does not depend on the root a.s.  
	%
	%
	
	Consider the root-change of changing the root to a uniformly at random neighbor of the root and compose it with its \textit{dual} given by Lemma~\ref{lemma:rootChange}. This can be explicitly written by
	\[
	T_G(u,v):=\sum_{w\in V(G)} \identity{\{w\sim u, w\sim v\}}\frac 1{d(u)d(v)} \left(\sum_{z\sim w}\frac 1{d(z)}\right)^{-1}.
	\]
	It is straightforward that when $G$ is not a single vertex, for all vertices $v\in V(G)$, one has $T_{G}^+(v)=T_{G}^-(v)=1$. Moreover, if $u,v$ have a common neighbor, then $T_{G}(u,v)>0$. Let $k\in \mathbb N$ and $t$ be the $k$-fold composition of $T$ with itself.  One can see $T$ as the law of (the first step of) a random walk on the vertices that preserves the distribution of $[\bs G, \bs o]$ and $t$ as the law of the $k$'th step of the random walk.
	By infiniteness of $\bs G$, it can be seen that almost surely, when $k\rightarrow\infty$, $t_{\bs G}(\cdot,\cdot)\rightarrow 0$ point-wise. We also have $t_{\bs G}^+(\cdot)=t_{\bs G}^-(\cdot)=1$ a.s.	
	
	For an arbitrary $\epsilon>0$, there exist $n\in\mathbb N$ and an event $A_n$ that depends only on the ball with radius $n$ centered at the root such that $\myprob{[\bs G', \bs o]\in A\Delta A_n}<\epsilon$. 
	One has
	\begin{eqnarray*}
		\omid{f[\bs G, \bs o]} &=& \omid{\identity{A}[\bs G', \bs o]}\\
		&= & \omid{\identity{A}[\bs G', \bs o] \sum_v \identity{A}[\bs G', v]t_{\bs G}(\bs o,v)}\\
		&\leq & \omid{\identity{A_n}[\bs G', \bs o] \sum_v \identity{A}[\bs G', v]t_{\bs G}(\bs o,v)}+\epsilon \\
		&= & \omid{\identity{A}[\bs G', \bs o] \sum_v \identity{A_n}[\bs G', v]t_{\bs G}(v,\bs o)}+\epsilon \\
		&\leq & \omid{\identity{A_n}[\bs G', \bs o] \sum_v \identity{A_n}[\bs G', v]t_{\bs G}(v,\bs o)}+2\epsilon \\
		&\leq & \omid{\identity{A_n}[\bs G', \bs o] \sum_{v\not\in N_{2n}(\bs o)} \identity{A_n}[\bs G', v]t_{\bs G}(v,\bs o)}+3\epsilon, \\
	\end{eqnarray*}
	where to ensure the last inequality holds, by dominated convergence, $k$ can be chosen large enough (depending on $\epsilon$, $n$, $A$ and $A_n$) in the definition of $t$. Now, note that conditioned on $[\bs G, \bs o]=[G,o]$, for $v\not\in N_{2n}(o)$, the balls $N_n(o)$ and $N_n(v)$ are disjoint and their marks are independent. Therefore, conditioned on $[\bs G, \bs o]$, the terms $\identity{A_n}[\bs G', \bs o]$ and $\sum_{v\not\in N_{2n}(\bs o)} \identity{A_n}[\bs G', v]t_{\bs G}(v,\bs o)$ are independent. 
	
	On the other hand, by defining
	$	
	f_n(G,o):=\omidCond{\identity{A_n}[\bs G',\bs o]}{[\bs G, \bs o]=[G, o]},
	$
	one has for any measurable function $h:\mathcal G_*\rightarrow\mathbb R^{\geq 0}$,
	\begin{eqnarray*}
		&& \omid{h[\bs G, \bs o] \sum_{v\not\in N_{2n}(\bs o)} \identity{A_n}[\bs G', v]t_{\bs G}(v,\bs o)} \\
		&=& \omid{\identity{A_n}[\bs G', \bs o] \sum_{v\not\in N_{2n}(\bs o)} h[\bs G, v]t_{\bs G}(\bs o,v)} \\
		&=& \omid{f_n[\bs G, \bs o] \sum_{v\not\in N_{2n}(\bs o)} h[\bs G, v]t_{\bs G}(\bs o,v)}\\
		&=& \omid{h[\bs G, \bs o] \sum_{v\not\in N_{2n}(\bs o)} f_n[\bs G, v]t_{\bs G}(v,\bs o)}.
	\end{eqnarray*}
	This implies that
	\[
	\omidCond{\sum_{v\not\in N_{2n}(\bs o)} \identity{A_n}[\bs G', v]t_{\bs G}(v,\bs o)}{[\bs G, \bs o]} =  \sum_{v\not\in N_{2n}(\bs o)} f_n[\bs G, v]t_{\bs G}(v,\bs o), \quad a.s.
	\]
	Therefore, by the above inequalities and the mentioned independence, one gets by conditioning on $[\bs G, \bs o]$ that
	\begin{eqnarray*}
		\omid{f[\bs G, \bs o]} &\leq& \omid{f_n[\bs G, \bs o] \sum_{v\not\in N_{2n}(\bs o)} f_n[\bs G, v]t_{\bs G}(v,\bs o)} + 3\epsilon\\
		&\leq & \omid{f_n[\bs G, \bs o] \sum_{v\in V(\bs G)} f_n[\bs G, v]t_{\bs G}(v,\bs o)} + 3\epsilon.
	\end{eqnarray*}
	For any measurable function $h$ on $\mathcal G_*$ such that $0\leq h\leq 1$, the fact $\myprob{[\bs G', \bs o]\in A\Delta A_n}<\epsilon$ easily implies that $\norm{\omid{(f[\bs G, \bs o]-f_n[\bs G, \bs o])h[\bs G, \bs o]}}<\epsilon$. Using this and unimodularity two times, the above inequality implies
	\begin{eqnarray*}
		\omid{f[\bs G, \bs o]} &\leq & \omid{f[\bs G, \bs o] \sum_{v\in V(\bs G)} f_n[\bs G, v]t_{\bs G}(v,\bs o)} + 4\epsilon\\
		&=& \omid{f_n[\bs G, \bs o] \sum_{v\in V(\bs G)} f[\bs G, v]t_{\bs G}(\bs o,v)} + 4\epsilon\\
		&\leq & \omid{f[\bs G, \bs o] \sum_{v\in V(\bs G)} f[\bs G, v]t_{\bs G}(\bs o,v)} + 5\epsilon\\
		&=&\omid{f[\bs G, \bs o]^2}+5\epsilon,
	\end{eqnarray*}
	where in the last equation, \eqref{eq:ergodic:2} is used. Since $0\leq f\leq 1$ and $\epsilon$ is arbitrary, this implies that $f[\bs G, \bs o]\in \{0,1\}$ a.s. So, \eqref{eq:ergodic:1} is proved and the proof is complete.
\end{proof}

\section{Bibliography of Analogous Results for Point Processes}
\label{sec:analogy}
In this section, we discuss a similarity between unimodular networks and stationary point processes (and random measures). Then, some of the concepts and results for random networks in this paper 
will be related to existing ones for point processes in the literature.

\subsection{General Analogies}
Let us recall Palm distributions and the mass transport principle for stationary point processes briefly. 
A stationary point processes is, roughly speaking, a random configuration $\Phi$ of points in $\mathbb R^d$ such that its distribution is invariant under the translations of $\mathbb R^d$. 
The Palm distribution $P_{\Phi}$ of $\Phi$ is defined by
$
\mathbb P_{\Phi}[A] = \frac 1{\omid{\card{\Phi\cap B}}}  \omid{\sum_{x\in \Phi\cap B} \identity{A}(\theta_x(\Phi))},
$
where $B$ is an arbitrary measurable set and $\theta_x(\Phi)$ is just $\Phi$ translated by the vector $-x$. In words, to obtain the Palm distribution, one should \textit{bias the probability measure by $\card{\Phi\cap B}$ and then move the origin to a uniformly at random point in $\Phi\cap B$}. Notice the similarity of this sentence with the examples in the introduction and Subsection~\ref{subsec:extension}. 
Another equivalent definition of the Palm distribution can obtained by clarifying the idea of conditioning $\Phi$ to have a point at the origin.
The \textit{mass transport principle} for stationary point processes is 
\[
\omidPalm{\Phi}{\sum_{x\in \Phi}g(\Phi,0,x)} = \omidPalm{\Phi}{\sum_{x\in \Phi}g(\Phi,x,0)}
\]
for any (measurable) function $g$ that is translation-invariant (see~\cite{HoPe03} or~\cite{HoPe06}). A result of Mecke~\cite{Me67} gives an extension of this property for stationary random measures (see~\cite{ThLa09}). Notice the similarity of the above mass transport principle with the one in Definition~\ref{def:unimodular}. This implies that, roughly speaking, any graph that is constructed from $\Phi$ in a translation-invariant manner is unimodular \cite{processes}. Mecke's formula~\cite{Me67} may also look related to~\eqref{eq:unimodular}, but all points of the space are taken into account:
\[
\omid{\sum_{x\in \Phi}h(0,x)} = \lambda \omidPalm{\Phi}{\int_{\mathbb R^d}h(x,0)dx}
\]
for all measurable functions $h(x,y)=h(\Phi,x,y)$ that are invariant under the translations. Here, $\lambda$ is the \textit{intensity} of $\Phi$.

A difference of the two concepts is that in point processes, there is a group action (that of translations) for \textit{moving the origin to another point}, but for rooted networks, there is no natural group for \textit{changing the root}. 
However, like the above similarities, one can transfer some concepts and results for point processes to analogous ones for random networks.
We will discuss in the next subsection that some of the results in sections~\ref{sec:shiftCoupling}, \ref{sec:unimodularCase} and~\ref{sec:construction} have analogous results for point processes in the literature. See also~\cite{vertexShift} for other results and examples of this analogy including Mecke's point-stationarity theorem and Neveu's exchange formula.

Finally, as mentioned in~\cite{processes}, the invariant sigma-field $I$ is analogous to the sigma-field of invariant events under translations and the notion of extremal unimodular networks is analogous to ergodic point processes.

\subsection{Analogies Regarding the Present Paper}
Let $[\bs G, \bs o]$ be a unimodular network. 
A covariant subset $S$ (Definition~\ref{def:covariantSubset}) can be considered analogous to a subprocess of (or thinning) a stationary point process. Then, conditioning the probability measure on $\bs o\in S_{\bs G}$ in Example~\ref{ex:subnetwork} is analogous to the Palm distribution of a subprocess (similarly, biasing by $\identity{\{\bs o\in S_{\bs G} \}} / \probCond{\bs o\in S_{\bs G}}{I}$ in Example~\ref{ex:subnetwork} is analogous to the \textit{modified Palm distribution}~\cite{ThLa09} of a stationary point process). Even fancier, one can think of $[\bs G, \bs o]$ and $S_{\bs G}$ as discrete objects analogous to the space and a point process respectively.
With this analogy, Proposition~\ref{prop:conditioning} is analogous to a result of~\cite{Th96} which states that the Palm distribution of a stationary point process $\Phi$ can be obtained from $\Phi$ by \textit{moving the origin to a random point of $\Phi$} if and only if the \textit{sample intensity} of $\Phi$, defined by $\omidCond{\card{\Phi\cap B}}{I}$ for an arbitrary set $B$ with unit volume, is essentially constant (here, $I$ is used for the sigma-field of invariant events under translations). 
This notion of sample intensity is also analogous to that of Proposition~\ref{prop:conditioning} and Example~\ref{ex:subnetwork} which is equal to $\omidCond{\card{S_{\bs G}\cap \{\bs o\}}}{I}$.

The special case of the result of~\cite{Th96} (mentioned above) for a Poisson point process $\Phi$ has been of special interest. The result implies that one can move the origin to a random point of $\Phi$ such that (the distribution of) the resulting point process is the same as $\Phi$ except that a point is added at the origin (see Slivnyak's theorem in~\cite{ScWe}).
Such a change of origin is introduced by Thorisson~\cite{Th96} and is called an \textit{extra head scheme} in~\cite{HoPe05}. The same holds for the Bernoulli point process in $\mathbb Z^d$ and 
is analogous to Proposition~\ref{prop:extraHead}.

More general to a stationary point process and its Palm version,~\cite{Th96} also studies when two (not necessarily stationary) point processes $\Phi$ and $\Psi$ can be obtained from each other by changing the origin (and covers even more general cases). This gives a coupling of $\Phi$ and $\Psi$ that is called a \textit{shift-coupling} in the literature. It is proved in~\cite{Th96} that a shift-coupling exists if and only if the distributions of $\Phi$ and $\Psi$ agree on the invariant sigma-field. 
Theorem~\ref{thm:shiftCoupling} in the present paper is its analogous result in the context of random networks.

To obtain a shift-coupling of a stationary point process $\Phi$ and its Palm version, one can use a translation-invariant \textit{balancing transport kernel} between (a multiple of) the Lebesgue measure and the counting measure on $\Phi$ (see~\cite{HoPe05} and~\cite{ThLa09}). A transport kernel is, roughly speaking, a function $T(x,y)$ depending on $\Phi$ that shows how much of the mass at each point $x$ of the space goes to each point $y$ in $\Phi$. It is \textit{balancing} when the sum of the outgoing mass is 1 and the integral of the incoming mass is constant for all points. The existence of such a transport kernel is proved in~\cite{HoPe05} and~\cite{ThLa09} using the result of~\cite{Th96} {and also by an explicit construction}. Analogously, Theorem~\ref{thm:balancing} proves the existence of balancing transport kernels in the context of unimodular networks with similar conditions. Invariant transport kernels are analogous to measurable functions on $\mathcal G_{**}$ as mentioned in Remark~\ref{rem:transport}. See also~\cite{ThLa09} for similar results for stationary random measures.

Assume $([\bs G, \bs o], S)$ is a proper extension of a unimodular network as in Section~\ref{sec:extAndTrans}. According to the analogy of subnetworks and Palm distributions mentioned earlier, $[\bs G, \bs o]$ is analogous to a Palm distribution. In fact, \eqref{eq:mtpOnS} is analogous to the mass transport principle for \textit{point-stationary} point processes (see~\cite{ThLa09}); which are more general than Palm distributions (when dealing with probability measures, as assumed here). Unimodularization of $[\bs G, \bs o]$ is analogous to reconstructing the stationary version of the point process from the Palm version~\cite{Me67}. The unimodularization $\mathcal P_T$ in Theorem~\ref{thm:extensionT} is analogous to \textit{the inversion formula} of~\cite{Me67} for the reconstruction. If the sample intensity of a point process is essentially constant, one can do the reconstruction by a shift-coupling as mentioned above. This is analogous to Theorem~\ref{thm:uniExtGen}.

The existence results of~\cite{ThLa09} and~\cite{Th96} are abstract results and cannot be used to construct a balancing transport kernel given realizations of the two point processes. However, several constructions 
are provided in the literature motivated mainly by~\cite{HoPe06} for point processes, which is motivated by the construction in~\cite{Li} and the stable marriage algorithm. This work is generalized in~\cite{stable} to cover the general case of stationary random measures.
Analogously, Theorem~\ref{thm:balancing} doesn't provide a construction of the balancing transport kernel in the context of unimodular networks. Theorem~\ref{thm:stable} is a construction which is analogous to the one in~\cite{stable}. 

\section*{Acknowledgements}
This work was motivated by the author's joint works with Francois Baccelli and Mir-Omid Haji-Mirsadeghi. I thank them also for their useful comments. I thank Lewis Bowen as well for his comments on Borel equivalence relations, especially for the discussion on Proposition~\ref{prop:nonstandard}.


\bibliographystyle{amsplain}

\end{document}